\newtheorem{theorem}{Theorem}[section]
\newtheorem{lemma}{Lemma}[section]
\newtheorem{remark}{Remark}[section]
\newtheorem{corollary}{Corollary}[section]
\newtheorem{proposition}{Proposition}[section]
\numberwithin{equation}{section}
\begin{document}
\title{operator spectral geometric  versus geometric mean}
\author{Hamid Reza Moradi,  Shigeru Furuichi and Mohammad Sababheh}
\subjclass[2010]{Primary 47A64, 47A63. Secondary 47A50.}
\keywords{Spectral geometric mean of positive operators, Ando inequality, Ando-Hiai inequality, Kantorovich constant.}

\begin{abstract}
The main goal of this article is to present new inequalities for the spectral geometric mean $A\natural_t B$ of two positive definite operators $A, B$ on a Hilbert space. The obtained results complement many known inequalities for the geometric mean $A\sharp_t B$. In particular, explicit comparisons between 
 $A\natural_t B$ and $A\sharp_t B$ are given, with applications towards Ando-type inequalities and Ando-Hiai inequalities for $A\natural_t B$ and some other consequences.
\end{abstract}
\maketitle
%------------------------------------------------------------------------------------%
\pagestyle{myheadings}
\markboth{\centerline {}}
{\centerline {}}
\bigskip
\bigskip

\section{Introduction}
Let $ \mathcal B\left( \mathcal H \right)$ be the $C^*$-algebra of all bounded linear operators on a Hilbert space $\mathcal{H}$. If $A\in \mathcal B\left( \mathcal H \right)$ is such that $\left<Ax,x\right>\geq 0$ for all $x\in\mathcal{H}$, $A$ is said to be positive semi-definite, and we write $A\geq 0$. If in addition $A$ is invertible, it is positive definite, and we write $A>0$. The class of positive definite operators in $\mathcal B\left( \mathcal H \right)$ will be denoted by $ \mathcal B\left( \mathcal H \right)^+$.
The ordering $A \ge B$  means $A-B\geq 0$. In addition, if $\alpha,\beta\in\mathbb{R}$, we write $\alpha \le A \le \beta$ if $\alpha I \le A\le \beta I$ where $I$ is the identity operator.

The weighted geometric mean of $A,B\in \mathcal B\left( \mathcal H \right)^+$ is defined by the equation
\begin{align}\label{eq_def_geo}
A\sharp_t B=A^{\frac{1}{2}}\left(A^{-\frac{1}{2}}BA^{-\frac{1}{2}}\right)^t A^{\frac{1}{2}}, \quad 0\leq t\leq 1.
\end{align}
When $t=\frac{1}{2}$, we simply write $A\sharp B$ instead of $A\sharp_{\frac{1}{2}}B.$
An interesting geometric meaning of $A\sharp B$ is that it is a midpoint of $A$ and $B$ for a natural Finsler metric (Thompson’s part metric) on the cone of positive definite operators \cite{Nus1, Nus2}.

The geometric mean $\sharp_t$ is a special case of operator means. Recall that an operator mean $\sigma$ on $ \mathcal B\left( \mathcal H \right)^+$ is a binary operation defined by
\begin{align*}
A\sigma B=A^{\frac{1}{2}}f\left(A^{-\frac{1}{2}}BA^{-\frac{1}{2}}\right)A^{\frac{1}{2}},
\end{align*}
where $f:(0,\infty)\to (0,\infty)$ is an operator monotone function, with $f(1)=1$.
Examples of other operator means are the weighted arithmetic and harmonic mean, defined respectively by
\begin{align*}
A\nabla_t B=(1-t)A+tB\;{\text{and}}\;A!_tB=\left((1-t)A^{-1}+tB^{-1}\right)^{-1}, 0\leq t\leq 1.
\end{align*}
It is well known that
\begin{align}\label{eq_means_comp}
A!_t B\leq A\sharp_t B\leq A\nabla_t B,\quad 0\leq t\leq 1.
\end{align}
In \cite{Fied1}, the spectral geometric mean was defined by 
\[A{{\natural}}B={{\left( {{A}^{-1}}\sharp B \right)}^{\frac{1}{2}}}A{\left( {{A}^{-1}}\sharp B \right)}^{\frac{1}{2}}.\]

Then the weighted spectral geometric mean was defined in \cite{Lee1} by
\[A{{\natural}_{t}}B={{\left( {{A}^{-1}}\sharp B \right)}^{t}}A{{\left( {{A}^{-1}}\sharp B \right)}^{t}}, 0\leq t\leq 1.\]

In fact, simple manipulations of the above definition implies  that $A{{\natural}_{t}}B$ is a unique positive definite solution $X$ of the following equation
\[{{\left( {{A}^{-1}}\sharp B \right)}^{t}}={{A}^{-1}}\sharp X.\]
We refer the reader to \cite{Kim1} as a recent reference for further algebraic and geometric meaning of $A\natural_t B$.

Our main target in this article is to present new inequalities for $A\natural_t B$, in a way that complements those inequalities known for $A\sharp_t B$. \\
In particular,\begin{itemize}
\item The relations between $\left<A\natural_t B x,x\right>$ and $\left<Ax,x\right>^{1-t}\left<Bx,x\right>^t$ are given.
\item Upper and lower bounds of $A\natural_t B$ in terms of $A\sharp_t B$ are given.
\item Ando-type inequalities are given for $\natural_t $, where the relation between $\Phi(A\natural_t B)$ and $\Phi(A)\natural_t \Phi(B)$ are described in both ways.
\item Kantorovich-type inequality is presented to find an upper bound of $\Phi\left(A^{-1}\right)\natural\Phi(A)^{-1}.$
\item An Ando-Hiai inequality for the spectral geometric mean is given to describe the relation between $\left(A\natural_t B\right)^r$ and $A^r\natural_t B^r$.
\item A detailed discussion of the Kantorovich constant is presented, as a by product of our results.
\end{itemize}

For this, we need to remind the reader of some well-established results for $A\sharp_t B.$ We also refer the reader to the obtained inequalities for spectral geometric mean that simulates the corresponding result for the geometric mean. This will help the reader better follow the order of the results.

\begin{lemma}\label{11}
\cite{10} Let $A,B\in   \mathcal B\left( \mathcal H \right)^+$, and let $\Phi $ be a unital positive linear map on $\mathcal B\left( \mathcal H \right)$. Then for any $0\le t\le 1$,
\[\Phi \left( A{{\sharp }_{t}}B \right)\le \Phi \left( A \right){{\sharp }_{t}}\Phi \left( B \right).\]
In particular, for any unit vector $x\in \mathcal H$,
\begin{equation}\label{eq_inner_geo}
\left\langle A{{\sharp}_{t}}Bx,x \right\rangle \le {{\left\langle Ax,x \right\rangle }^{1-t}}{{\left\langle Bx,x \right\rangle }^{t}}.
\end{equation}

\end{lemma}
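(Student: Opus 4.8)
The plan is to obtain the scalar inequality \eqref{eq_inner_geo} as the special case of the operator inequality attached to a vector state, and to prove the operator inequality
\[
\Phi(A\sharp_t B)\le\Phi(A)\sharp_t\Phi(B),\qquad 0\le t\le 1,
\]
(Ando's transformer inequality for the weighted geometric mean) by first disposing of $t=\tfrac12$ and then propagating to every $t\in[0,1]$. For the reduction: if $x$ is a unit vector, then $\Phi_x(\,\cdot\,)=\langle\,\cdot\,x,x\rangle$ is a unital positive linear map on $\mathcal B(\mathcal H)$, and since $a\sharp_t b=a^{1-t}b^{t}$ for positive scalars $a,b$ by \eqref{eq_def_geo}, the operator inequality applied to $\Phi_x$ becomes exactly \eqref{eq_inner_geo}. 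Note also that a unital positive $\Phi$ maps $\mathcal B(\mathcal H)^{+}$ into itself (if $A\ge mI$ then $\Phi(A)\ge mI$), so both sides of the operator inequality are meaningful.

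For $t=\tfrac12$ I would use the extremal description of the geometric mean,
\[
A\sharp B=\max\bigl\{\,X=X^{*}\ :\ \begin{pmatrix}A & X\\ X & B\end{pmatrix}\ge 0\,\bigr\}.
\]
Taking $X=A\sharp B$ gives $\left(\begin{smallmatrix}A & A\sharp B\\ A\sharp B & B\end{smallmatrix}\right)\ge 0$; applying $\Phi$ to the entries, the $2\times 2$ block matrix remains positive (this is the delicate point, addressed below), so $\left(\begin{smallmatrix}\Phi(A) & \Phi(A\sharp B)\\ \Phi(A\sharp B) & \Phi(B)\end{smallmatrix}\right)\ge 0$, and the maximality above forces $\Phi(A\sharp B)\le\Phi(A)\sharp\Phi(B)$.

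To reach arbitrary $t$ I would use the composition identity
\[
(A\sharp_s B)\sharp(A\sharp_u B)=A\sharp_{(s+u)/2}B,
\]
which follows immediately from the congruence invariance of $\sharp$ (reduce to $A=I$, where it reads $B^{s}\sharp B^{u}=B^{(s+u)/2}$). Starting from the trivial cases $t=0,1$ and the case $t=\tfrac12$, an induction on the dyadic denominator — combining the $t=\tfrac12$ inequality applied to the pair $(A\sharp_s B,\ A\sharp_u B)$ with the joint monotonicity of $\sharp$ and the identity above — yields $\Phi(A\sharp_t B)\le\Phi(A)\sharp_t\Phi(B)$ for every dyadic $t\in[0,1]$. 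Since $t\mapsto A\sharp_t B$ and $t\mapsto\Phi(A)\sharp_t\Phi(B)$ are norm-continuous (the map $t\mapsto C^{t}$ is continuous on the invertible positive operators) and the dyadics are dense in $[0,1]$, the inequality persists for all $t$.

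The subtle step is the entrywise application of $\Phi$ in the $t=\tfrac12$ case: a positive linear map need not be $2$-positive, so it need not send an arbitrary positive $2\times 2$ operator matrix to a positive one. This causes no difficulty for \eqref{eq_inner_geo}, where the vector state $\Phi_x$ is automatically completely positive; for a general unital positive $\Phi$ one instead invokes the Kubo--Ando integral representation of $\sharp$ (indeed of any operator mean) as an average of parallel sums, together with the transformer inequality $\Phi(A:B)\le\Phi(A):\Phi(B)$ for the parallel sum, which does hold for every positive linear map. I expect this last reduction, rather than the dyadic bookkeeping or the limiting argument, to be the real obstacle.
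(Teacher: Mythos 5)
The paper offers no proof of Lemma \ref{11} at all; it is quoted verbatim from Ando \cite{10}, so your argument is an independent reconstruction rather than a parallel of anything in the text. As a reconstruction it is essentially sound: the reduction of \eqref{eq_inner_geo} to the operator inequality via the vector state $\Phi_x=\langle\,\cdot\,x,x\rangle$ is correct (and there the block-matrix step is harmless, since $\Phi_x$ is completely positive); the identity $(A\sharp_sB)\sharp(A\sharp_uB)=A\sharp_{(s+u)/2}B$, the dyadic induction via joint monotonicity of $\sharp$, and the continuity passage are all fine; and you correctly isolate the one delicate point, namely that a merely positive $\Phi$ applied entrywise to a positive $2\times 2$ operator matrix needs $2$-positivity in general. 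Your proposed repair is legitimate — the Kubo--Ando representation of a mean by parallel sums and the inequality $\Phi(A:B)\le\Phi(A):\Phi(B)$ for positive linear maps are genuine classical theorems — but note the architectural redundancy: once you invoke that machinery you get $\Phi(A\sigma B)\le\Phi(A)\sigma\Phi(B)$ for \emph{every} operator mean $\sigma$, in particular for $\sharp_t$ itself and for every $t$ at once, so the $t=\tfrac12$ maximality argument and the dyadic/continuity bootstrap become superfluous. As written, the decisive step of your proof is the one you outsource to the literature, while the part you work out in detail is the part that is no longer needed.

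For a short self-contained proof using only what the paper already quotes (Lemma \ref{lemm_choi} for unital positive maps): for positive definite $A,B$ set $C=A^{-1/2}BA^{-1/2}$ and define $\Psi(X)=\Phi(A)^{-1/2}\Phi\left(A^{1/2}XA^{1/2}\right)\Phi(A)^{-1/2}$, which is a unital positive linear map (note $\Phi(A)\ge mI>0$ by unitality). Since $x\mapsto x^t$ is operator concave for $0\le t\le 1$, Lemma \ref{lemm_choi} gives $\Psi\left(C^t\right)\le\Psi(C)^t$ with $\Psi(C)=\Phi(A)^{-1/2}\Phi(B)\Phi(A)^{-1/2}$; multiplying by $\Phi(A)^{1/2}$ on both sides and recalling \eqref{eq_def_geo} yields exactly $\Phi(A\sharp_tB)\le\Phi(A)\sharp_t\Phi(B)$, with no $2$-positivity, no dyadic induction, and no integral representation. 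The scalar inequality \eqref{eq_inner_geo} then follows from your vector-state observation.
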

We refer the reader to Theorems \ref{4} and \ref{thm_and_1} below for the spectral geometric mean versions of Lemma \ref{11}.

In this paper, we use the generalized Kantorovich constant for $0<m<M$ and $t\in \mathbb{R}$:
	\[K\left( m,M,t \right)=\frac{\left(m{{M}^{t}}-M{{m}^{t}}\right)}{\left( t-1 \right)\left( M-m \right)}{{\left( \frac{t-1}{t}\frac{{{M}^{t}}-{{m}^{t}}}{m{{M}^{t}}-M{{m}^{t}}} \right)}^{t}}.\]
	It is known that this  recovers the (original) Kantorovich constant when $t=-1$ and $t=2$, that is, $K(m,M,2)=K(m,M,-1)=\dfrac{(M+m)^2}{4Mm}$, whose constant was  appeared in  \cite[p.142]{Kantorovich} as the so-called Kantorovich inequality. In Section \ref{sec3}, we study the generalized Kantorovich constant is defined by
\begin{equation}\label{def_K01}
	K(x,t):=\frac{(x^t-x)}{(t-1)(x-1)}\left(\frac{t-1}{t}\frac{x^t-1}{x^t-x}\right)^t,\quad x>0,\,\,\,\,t\in\mathbb{R}.
\end{equation}
We see that $K(x,1,t)=K(x,t)$.
We use the same symbol $K$ as the generalized Kantorovich constant. However, this never creates confusion for the readers by the different numbers of their variables. 

\begin{lemma}\cite{2}\label{5}
Let $A,B\in \mathcal B\left( \mathcal H \right)$ be 	such that $mA\le B\le MA$ for some scalars $0<\text{}m\le M$, and let $\Phi $ be a unital positive linear map. Then for any $0\le t\le 1$,
	\[\Phi \left( A \right){{\sharp}_{t}}\Phi \left( B \right)\le \frac{1}{K\left( m,M,t \right)}\Phi \left( A{{\sharp }_{t}}B \right).\]
\end{lemma}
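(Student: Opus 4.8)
The plan is to follow the Mond--Pe\v{c}ari\'c scheme: reduce the operator inequality, via an auxiliary unital positive linear map, to a reverse Jensen inequality for the operator concave function $x\mapsto x^{t}$, and then settle that by a one--variable estimate that is essentially the definition of $K(m,M,t)$. First I would set $\Psi(X):=\Phi(A)^{-1/2}\Phi\!\left(A^{1/2}XA^{1/2}\right)\Phi(A)^{-1/2}$; this is positive and unital, since $\Psi(I)=\Phi(A)^{-1/2}\Phi(A)\Phi(A)^{-1/2}=I$. Putting $C:=A^{-1/2}BA^{-1/2}$, the hypothesis $mA\le B\le MA$ becomes $mI\le C\le MI$, and from \eqref{eq_def_geo} one computes $\Phi(A\sharp_t B)=\Phi(A)^{1/2}\Psi(C^{t})\Phi(A)^{1/2}$, $\Phi(B)=\Phi(A)^{1/2}\Psi(C)\Phi(A)^{1/2}$, and hence $\Phi(A)\sharp_t\Phi(B)=\Phi(A)^{1/2}\Psi(C)^{t}\Phi(A)^{1/2}$. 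Conjugating by $\Phi(A)^{-1/2}$, the asserted inequality is equivalent to
\[
\Psi(C)^{t}\le\frac{1}{K(m,M,t)}\,\Psi(C^{t}),\qquad mI\le C\le MI .
\]

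For this reverse Jensen inequality I would exploit the concavity of $x\mapsto x^{t}$ on $[m,M]$: its graph lies above the chord through $(m,m^{t})$ and $(M,M^{t})$, so with $\alpha:=\frac{M^{t}-m^{t}}{M-m}>0$ and $\beta:=\frac{Mm^{t}-mM^{t}}{M-m}>0$ we have $x^{t}\ge\alpha x+\beta$ for $x\in[m,M]$; by functional calculus $C^{t}\ge\alpha C+\beta I$, and applying $\Psi$ yields $\Psi(C^{t})\ge\alpha\Psi(C)+\beta I$. It then suffices to prove $\alpha\Psi(C)+\beta I\ge K(m,M,t)\,\Psi(C)^{t}$, and since $mI\le\Psi(C)\le MI$ this follows from the scalar inequality $\alpha s+\beta\ge K(m,M,t)\,s^{t}$ for $s\in[m,M]$. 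That inequality is exactly the assertion $K(m,M,t)=\min_{m\le s\le M}\frac{\alpha s+\beta}{s^{t}}$: the function $g(s):=(\alpha s+\beta)s^{-t}$ tends to $+\infty$ as $s\to 0^{+}$ and as $s\to\infty$, has the unique critical point $s_{0}=\frac{\beta t}{\alpha(1-t)}$, and a direct simplification shows $g(s_{0})$ equals the closed form defining $K(m,M,t)$; because $g(m)=g(M)=1$, this unique minimum lies in $(m,M)$, so $g\ge g(s_{0})=K(m,M,t)$ on $[m,M]$.

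Chaining the estimates, $\Psi(C^{t})\ge\alpha\Psi(C)+\beta I\ge K(m,M,t)\,\Psi(C)^{t}$; conjugating back by $\Phi(A)^{1/2}$ gives $\Phi(A\sharp_t B)\ge K(m,M,t)\,\Phi(A)\sharp_t\Phi(B)$, which is the claim, the cases $t=0,1$ being trivial since then $K(m,M,t)=1$ and both sides coincide. I expect the main obstacle to be bookkeeping rather than conceptual depth: the reduction through $\Psi$ must be carried out carefully (checking unitality and the three identities above), and, most of all, one must verify that the minimum of $g$ agrees term by term with the given formula for $K(m,M,t)$, including the check that $s_{0}\in(m,M)$ so that no boundary case intervenes. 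The chord bound and the final conjugation are routine manipulations with positive linear maps and functional calculus.
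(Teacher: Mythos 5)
Your proposal is correct. Note, however, that the paper offers no proof of this lemma to compare against: it is quoted verbatim from the reference \cite{2} (Bourin--Lee--Fujii--Seo), so the only thing to assess is your argument on its own merits, and it checks out. The reduction via the unital positive map $\Psi(X)=\Phi(A)^{-1/2}\Phi\bigl(A^{1/2}XA^{1/2}\bigr)\Phi(A)^{-1/2}$ and the three identities $\Phi(A\sharp_tB)=\Phi(A)^{1/2}\Psi(C^t)\Phi(A)^{1/2}$, $\Phi(B)=\Phi(A)^{1/2}\Psi(C)\Phi(A)^{1/2}$, $\Phi(A)\sharp_t\Phi(B)=\Phi(A)^{1/2}\Psi(C)^t\Phi(A)^{1/2}$ are exactly right (they need $\Phi(A)$ invertible, which holds since $A$ is positive definite, hence $A\ge\varepsilon I$ and $\Phi(A)\ge\varepsilon I$). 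The chord bound $C^t\ge\alpha C+\beta I$, the passage through $\Psi$, and the scalar minimization do give the claim: with $\alpha=\frac{M^t-m^t}{M-m}$, $\beta=\frac{Mm^t-mM^t}{M-m}$ one indeed gets $g(s_0)=\frac{\beta}{1-t}\bigl(\frac{(1-t)\alpha}{t\beta}\bigr)^t=K(m,M,t)$, and since $g(m)=g(M)=1$ while $g$ has a single critical point on $(0,\infty)$ which is its global minimum, that minimum lies in $(m,M)$, so no boundary case intervenes; the endpoint cases $t\in\{0,1\}$ you dispose of correctly. This is the standard Mond--Pe\v{c}ari\'c scheme, which is also the spirit of the cited source. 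The only cosmetic caveat is the degenerate case $m=M$ (where the closed form for $K$ is $0/0$ but the inequality is a trivial equality with constant $1$), which is worth a sentence if you write this up.
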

The spectral geometric mean version of Lemma \ref{5} is stated below in Theorem \ref{thm_rev_ando}.

\begin{lemma}\label{10}
\cite{1}
Let $A,B\in \mathcal B\left( \mathcal H \right)^+$ such that ${{m}_{1}}\le A\le {{M}_{1}}$, ${{m}_{2}}\le B\le {{M}_{2}}$ and let $\Phi $ be a unital positive linear map on $\mathcal B\left( \mathcal H \right)$. Then
\[\Phi \left( A \right)\sharp \Phi \left( B \right)\le \frac{\sqrt{{{M}_{1}}{{M}_{2}}}+\sqrt{{{m}_{1}}{{m}_{2}}}}{2\sqrt[4]{{{M}_{1}}{{m}_{1}}{{M}_{2}}{{m}_{2}}}}\Phi \left( A\sharp B \right).\]
In particular, for any unit vector $x\in \mathcal H$,
\[\sqrt{\left\langle Ax,x \right\rangle \left\langle Bx,x \right\rangle }\le \frac{\sqrt{{{M}_{1}}{{M}_{2}}}+\sqrt{{{m}_{1}}{{m}_{2}}}}{2\sqrt[4]{{{M}_{1}}{{m}_{1}}{{M}_{2}}{{m}_{2}}}}\left\langle A\sharp Bx,x \right\rangle.\]
\end{lemma}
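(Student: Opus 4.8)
The cleanest route is to derive the inequality from Lemma~\ref{5}, taken at $t=\tfrac12$, after replacing the two separate spectral bounds on $A$ and $B$ by a single relative bound of the form $mA\le B\le MA$. Concretely: since $A\le M_1$ and $m_2\le B$ we have $\tfrac{m_2}{M_1}A\le\tfrac{m_2}{M_1}M_1=m_2\le B$, and since $m_1\le A$ and $B\le M_2$ we have $B\le M_2=\tfrac{M_2}{m_1}m_1\le\tfrac{M_2}{m_1}A$. Thus $mA\le B\le MA$ with $m:=\tfrac{m_2}{M_1}$, $M:=\tfrac{M_2}{m_1}$, and $0<m\le M$ because $m_1m_2\le M_1M_2$.

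Now Lemma~\ref{5} with $t=\tfrac12$ applies directly and yields $\Phi(A)\sharp\Phi(B)\le K\!\big(\tfrac{m_2}{M_1},\tfrac{M_2}{m_1},\tfrac12\big)^{-1}\,\Phi(A\sharp B)$, so the whole proof reduces to identifying this constant. Evaluating the generalized Kantorovich constant at $t=\tfrac12$: the rational factor is $\dfrac{mM^{1/2}-Mm^{1/2}}{-\tfrac12(M-m)}=\dfrac{-\sqrt{Mm}(\sqrt M-\sqrt m)}{-\tfrac12(\sqrt M-\sqrt m)(\sqrt M+\sqrt m)}=\dfrac{2\sqrt{Mm}}{\sqrt M+\sqrt m}$, while the bracketed factor is $\big(\tfrac{1}{\sqrt{Mm}}\big)^{1/2}=(Mm)^{-1/4}$, so $K(m,M,\tfrac12)=\dfrac{2(Mm)^{1/4}}{\sqrt M+\sqrt m}$ and $K(m,M,\tfrac12)^{-1}=\dfrac{\sqrt M+\sqrt m}{2(Mm)^{1/4}}$. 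Substituting $m=\tfrac{m_2}{M_1}$, $M=\tfrac{M_2}{m_1}$ and rationalising turns the right-hand side into exactly $\dfrac{\sqrt{M_1M_2}+\sqrt{m_1m_2}}{2\sqrt[4]{M_1m_1M_2m_2}}$, which is the claimed operator inequality.

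For the ``in particular'' assertion I would specialise $\Phi$ to the vector state $X\mapsto\langle Xx,x\rangle$, which is a unital positive linear map for a unit vector $x$; since $a\sharp b=\sqrt{ab}$ for positive scalars, the operator inequality reads $\sqrt{\langle Ax,x\rangle\langle Bx,x\rangle}\le C\,\langle A\sharp Bx,x\rangle$, where $C$ is the constant above. I expect no genuine difficulty beyond invoking Lemma~\ref{5}; the only delicate points are the two algebraic simplifications — cancelling the factor $\sqrt M-\sqrt m$ correctly when evaluating $K(m,M,\tfrac12)$ (using $M-m=(\sqrt M-\sqrt m)(\sqrt M+\sqrt m)$ against $mM^{1/2}-Mm^{1/2}=-\sqrt{Mm}(\sqrt M-\sqrt m)$), and the final rationalisation of the constant after the substitution — both routine but easy to slip on.
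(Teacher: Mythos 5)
Your proposal is correct, but note that the paper itself offers no proof of Lemma \ref{10}: it is quoted from \cite{1} (Lee's matrix reverse Cauchy--Schwarz inequality), where it is established directly, not via the generalized Kantorovich constant. Your route instead deduces it from Lemma \ref{5} at $t=\tfrac12$, and every step checks out: the passage from the two-sided bounds $m_1\le A\le M_1$, $m_2\le B\le M_2$ to the relative bound $\tfrac{m_2}{M_1}A\le B\le \tfrac{M_2}{m_1}A$ is valid (with $\tfrac{m_2}{M_1}\le\tfrac{M_2}{m_1}$ since $m_1m_2\le M_1M_2$); the evaluation $K(m,M,\tfrac12)=\dfrac{2(Mm)^{1/4}}{\sqrt M+\sqrt m}$ agrees with the paper's own observation that $K(x,\tfrac12)=\dfrac{2\sqrt[4]{x}}{\sqrt x+1}$ together with the scale invariance $K(cm,cM,t)=K(m,M,t)$; and substituting $m=\tfrac{m_2}{M_1}$, $M=\tfrac{M_2}{m_1}$ (equivalently $x=\tfrac{M_1M_2}{m_1m_2}$) reproduces exactly the constant $\dfrac{\sqrt{M_1M_2}+\sqrt{m_1m_2}}{2\sqrt[4]{M_1m_1M_2m_2}}$, so no sharpness is lost in the reduction. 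The ``in particular'' clause via the vector state $X\mapsto\langle Xx,x\rangle$ is the standard specialization and is fine. The only caveat worth recording is logical bookkeeping rather than mathematics: historically the reverse H\"older inequality of \cite{2} (Lemma \ref{5}) generalizes Lee's result, so your argument derives the special case from the general one; within this paper, where both are taken as cited black boxes, that is perfectly legitimate and arguably more economical, but it would be circular if one were trying to give an independent proof of the whole circle of reverse inequalities from scratch.
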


The Choi-Davis inequality is stated next \cite{choi,davis}.

\begin{lemma}\label{lemm_choi}
 Let $f:J\to\mathbb{R}$ be an operator convex function, and let $\Phi:\mathcal{B}(\mathcal{H})\to \mathcal{B}(\mathcal{K})$ be a unital positive linear mapping. Then
\begin{equation}\label{choi_davis_ineq}
f(\Phi(A))\leq \Phi(f(A)),
\end{equation}
for all self-adjoint operators $A\in\mathcal{B}(\mathcal{H})$ with spectra in the interval $J$. \\
In particular, $\Phi(A)^{-1}\leq \Phi\left(A^{-1}\right),$ for any invertible self-adjoint operator $A$.

The inequality \eqref{choi_davis_ineq} is reversed when $f$ is operator concave.\\
In particular, if $0\leq t\leq 1,$ then
$$\Phi(A^t)\leq \Phi(A)^t,$$ for any positive definite operator $A$.

\end{lemma}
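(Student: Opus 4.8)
The plan is to exploit the standard observation that, although $\Phi$ is only assumed positive (not completely positive), its restriction to a \emph{commutative} $C^*$-subalgebra is automatically completely positive, which puts a Stinespring dilation at our disposal. Given a self-adjoint $A\in\mathcal B(\mathcal H)$ with $\sigma(A)\subseteq J$, I would first pass to the abelian $C^*$-algebra $\mathcal A$ generated by $A$ and $I$; by continuous functional calculus $\mathcal A\cong C(\sigma(A))$, and in particular $f(A)\in\mathcal A$. Since $\Phi|_{\mathcal A}$ is a unital positive linear map on a commutative $C^*$-algebra, Stinespring's theorem shows it is completely positive, so there exist a Hilbert space $\mathcal K'$, a unital $*$-representation $\pi:\mathcal A\to\mathcal B(\mathcal K')$, and an isometry $V:\mathcal K\to\mathcal K'$ with $\Phi(X)=V^*\pi(X)V$ for all $X\in\mathcal A$; unitality of $\Phi$ forces $V^*V=I$.

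Next I would push through the dilation. Because $\pi$ is a $*$-homomorphism it preserves spectra and commutes with the functional calculus, so $\pi(A)$ is self-adjoint with $\sigma(\pi(A))\subseteq\sigma(A)\subseteq J$ and $\pi(f(A))=f(\pi(A))$. Since $V$ is an isometry, the numerical range of $V^*\pi(A)V$ sits inside that of $\pi(A)$, whence $\sigma(V^*\pi(A)V)\subseteq[\min\sigma(A),\max\sigma(A)]\subseteq J$ and $f(V^*\pi(A)V)$ is well defined. Applying the Jensen operator inequality for an isometry (Hansen--Pedersen), namely $f(V^*\pi(A)V)\le V^*f(\pi(A))V$ for $f$ operator convex on $J$ and $V$ an isometry, I obtain
\[
f(\Phi(A)) = f\bigl(V^*\pi(A)V\bigr) \le V^*f(\pi(A))V = V^*\pi(f(A))V = \Phi(f(A)),
\]
which is \eqref{choi_davis_ineq}. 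The reversed inequality for operator concave $f$ follows by applying this to $-f$. The two ``in particular'' assertions are then specializations: $t\mapsto t^{-1}$ is operator convex on $(0,\infty)$, giving $\Phi(A)^{-1}\le\Phi(A^{-1})$ for positive definite $A$, while for $0\le t\le 1$ the function $s\mapsto s^{t}$ is operator concave on $(0,\infty)$ by the L\"owner--Heinz inequality, so the reversed form gives $\Phi(A^{t})\le\Phi(A)^{t}$.

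I expect the real obstacle to be the Jensen operator inequality for an isometry used above: unlike the dilation step it is not purely formal, and a self-contained proof proceeds by dilating the isometry $V$ to a unitary on $\mathcal K'\oplus\mathcal K'$ and combining the midpoint operator-convexity estimate $f\bigl(\tfrac12(X+Y)\bigr)\le\tfrac12\bigl(f(X)+f(Y)\bigr)$ with the symmetrization in which the unitary $U$ is replaced by $\mathrm{diag}(I,-I)\,U\,\mathrm{diag}(I,-I)$ and the two contributions are averaged, so that the off-diagonal corners cancel. Since the statement is classical, an acceptable alternative is simply to cite \cite{choi,davis} (or Hansen--Pedersen) for this ingredient rather than reproduce it.
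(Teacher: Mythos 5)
The paper does not actually prove this lemma: it is quoted as the classical Choi--Davis inequality with references to \cite{choi,davis}, so there is no in-paper argument to compare against. Your proof is correct and is essentially a reconstruction of the classical route: the observation that a positive map restricted to the commutative $C^*$-algebra generated by $A$ and $I$ is automatically completely positive is exactly Choi's reduction, and the Stinespring dilation $\Phi(X)=V^{*}\pi(X)V$ with $V^{*}V=I$ combined with the Hansen--Pedersen Jensen inequality $f(V^{*}XV)\le V^{*}f(X)V$ for an isometry is the dilation idea going back to Davis; note that the isometry version needs no normalization such as $0\in J$, $f(0)\le 0$ (that is only required for contractions), so your use of it on a general interval is legitimate. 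Your sketch of that ingredient --- dilate $V$ to a block unitary, pad the operator with $cI$ for some $c\in J$ in the complementary block, average the two conjugations differing by $\mathrm{diag}(I,-I)$ so the off-diagonal corners cancel, then use midpoint operator convexity and positivity of the corner compression --- is the standard proof and is sound, and citing Hansen--Pedersen instead is equally acceptable, as you say. The remaining steps (spectral containment via the numerical range, $\pi(f(A))=f(\pi(A))$ for the unital representation, the concave case via $-f$, and the specializations $x\mapsto x^{-1}$ operator convex and $x\mapsto x^{t}$ operator concave by L\"owner--Heinz) are all fine; in particular your restriction of the inverse inequality to positive definite $A$ is the correct reading of the paper's looser phrase ``invertible self-adjoint,'' since otherwise the spectrum need not lie in an interval on which $x^{-1}$ is operator convex and $\Phi(A)$ need not even be invertible.
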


\begin{lemma}\label{12}
\cite{4} Let $A\in \mathcal B\left( \mathcal H \right)^+$ be such that $m\le A\le M$, and let $\Phi $ be a positive linear map on $\mathcal B\left( \mathcal H \right)$. Then for any $0\le t\le 1$,
\[\Phi {{\left( A \right)}^{t}}\le \frac{1}{K\left( m,M,t \right)}\Phi \left( {{A}^{t}} \right)\]
	In particular, for any unit vector $x\in \mathcal H$,
	\[{{\left\langle Ax,x \right\rangle }^{t}}\le \frac{1}{K\left( m,M,t \right)}\left\langle {{A}^{t}}x,x \right\rangle \]

\end{lemma}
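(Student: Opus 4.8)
The plan is to squeeze $\Phi(A)^{t}$ between two operators produced from a scalar inequality on $[m,M]$ and its Kantorovich-type reverse, transferring everything through the functional calculus and the map $\Phi$. Throughout I take $\Phi$ to be unital (as in Lemmas \ref{11}, \ref{5} and \ref{10}, and as is in fact needed for the stated inequality to hold). The basic auxiliary object is the chord
\[
\ell(s)=\frac{M^{t}-m^{t}}{M-m}(s-m)+m^{t},\qquad s\in[m,M],
\]
of $s\mapsto s^{t}$ joining $(m,m^{t})$ and $(M,M^{t})$.

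First I would record two one-variable estimates valid for $0<m\le s\le M$ and $0\le t\le 1$. Since $s\mapsto s^{t}$ is concave, its graph lies above all its chords, so $\ell(s)\le s^{t}$. For the reverse estimate I would analyse $g(s)=\ell(s)/s^{t}=\alpha s^{1-t}+\beta s^{-t}$, where $\alpha=\frac{M^{t}-m^{t}}{M-m}>0$ and $\beta=\frac{Mm^{t}-mM^{t}}{M-m}>0$: the equation $g'(s)=0$ has the single root $s_{0}=\frac{t}{1-t}\,\frac{\beta}{\alpha}$, one checks that $s_{0}\in(m,M)$ and $g''(s_{0})>0$, and since $g(m)=g(M)=1$ the value $g(s_{0})$ is the minimum of $g$ on $[m,M]$. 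A direct simplification gives $g(s_{0})=\frac{\beta}{1-t}\,s_{0}^{-t}$, which is exactly the generalized Kantorovich constant $K(m,M,t)$ of \eqref{def_K01}; hence $s^{t}\le K(m,M,t)^{-1}\,\ell(s)$ for every $s\in[m,M]$.

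Now I would pass to operators. Since $m\le A\le M$, applying the functional calculus to $\ell(s)\le s^{t}$ and then the positive unital map $\Phi$ gives $\alpha\Phi(A)+\beta I\le\Phi(A^{t})$, i.e.\ $\ell(\Phi(A))\le\Phi(A^{t})$ (legitimate because $\ell$ is affine and $\Phi(I)=I$). Since also $m\le\Phi(A)\le M$, the second scalar estimate applied through the functional calculus of $\Phi(A)$ yields $K(m,M,t)\,\Phi(A)^{t}\le\ell(\Phi(A))$. Combining the two displays gives $K(m,M,t)\,\Phi(A)^{t}\le\Phi(A^{t})$, which is the assertion, and the vector form is the special case $\Phi(\,\cdot\,)=\langle\,\cdot\,x,x\rangle$ for a unit vector $x$. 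The one genuinely delicate point is the middle paragraph: verifying that $s_{0}$ really lies in the open interval $(m,M)$ (so that the endpoint values $g(m)=g(M)=1$ dominate the interior minimum) and that this minimum coincides with the closed form in \eqref{def_K01} — a routine but slightly tedious algebraic check. I would also note that, granting Lemma \ref{5}, the inequality is obtained at once as its special case $A=I$, using $I\sharp_{t}C=C^{t}$ for positive $C$ together with $\Phi(I)=I$.
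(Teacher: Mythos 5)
Your proof is correct, but note that the paper offers no proof of Lemma \ref{12} to compare against: it is quoted from Li and Mathias \cite{4}, and your argument is the standard Mond--Pe\v{c}ari\'{c}-type (chord) proof of that result, so there is no conflict of method. Two small observations. First, the ``delicate point'' you flag is not actually needed: since $\alpha,\beta>0$ for $0<t<1$, the function $g(s)=\alpha s^{1-t}+\beta s^{-t}$ tends to $+\infty$ at both ends of $(0,\infty)$ and has $s_{0}$ as its unique critical point, so $s_{0}$ is the \emph{global} minimizer and $g(s)\ge g(s_{0})=K(m,M,t)$ holds on all of $[m,M]$ whether or not $s_{0}$ lies in $(m,M)$; that membership only matters for sharpness of the constant. (Your identification $g(s_{0})=\frac{\beta}{1-t}s_{0}^{-t}=K(m,M,t)$, with $K$ as in \eqref{def_K02}, checks out.) Second, your computation divides by $1-t$ and by $t$, so the endpoint cases $t=0$ and $t=1$ should be recorded separately; both are trivial since there $K(m,M,t)=1$ and the two sides coincide (using $\Phi(I)=I$). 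Your insistence on unitality is justified: as stated the lemma omits ``unital,'' but it fails without it (e.g.\ $\Phi=c\,\mathrm{id}$ with $c>0$ small gives $c^{t-1}\le K(m,M,t)^{-1}$, which is false as $c\to0$), and the vector form implicitly uses the unital functional $\langle\,\cdot\,x,x\rangle$. Finally, your closing remark is also valid: taking $A=I$ in Lemma \ref{5} and using $I\sharp_{t}C=C^{t}$ recovers Lemma \ref{12} immediately, which is an economical alternative given the toolkit already assembled in the paper.
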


\begin{lemma}\cite{6}\label{2}
Let $A\in \mathcal B\left( \mathcal H \right)$ such that $0<m\le A\le M$ and let $\Phi$ be a positive unital linear mapping. Then
$$\Phi\left(A^{-1}\right)\leq \frac{{{\left( M+m \right)}^{2}}}{4Mm} \Phi(A)^{-1}.$$ 

In particular, if $x\in \mathcal H$ is any unit vector, then
\[\left\langle {{A}^{-1}}x,x \right\rangle \le \frac{{{\left( M+m \right)}^{2}}}{4Mm}{{\left\langle Ax,x \right\rangle }^{-1}},\]
for any positive definite $A$.
\end{lemma}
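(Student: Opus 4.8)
The plan is to reduce this operator version of the classical Kantorovich inequality to a pair of elementary scalar-type estimates that are preserved under a positive unital map. First I would use a commuting factorization: since $0<m\le A\le M$, the operators $MI-A$ and $A-mI$ are positive and commute, so their product is positive, which after expansion reads $(M+m)A - mM\,I\ge A^{2}$. Conjugating both sides by $A^{-1/2}$ (which commutes with $A$, $A^{2}$ and $I$, so the order relation is preserved) yields the key pointwise bound
$$A + mM\,A^{-1}\le (M+m)\,I.$$

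Next I would apply $\Phi$. Because $\Phi$ is linear, positive and unital, this gives $\Phi(A)+mM\,\Phi(A^{-1})\le (M+m)\,I$, that is,
$$mM\,\Phi(A^{-1})\le (M+m)\,I-\Phi(A).$$
Now I would exploit that $m\le \Phi(A)\le M$ (again by positivity and unitality of $\Phi$), and that $(M+m)I-\Phi(A)$ is a polynomial in $\Phi(A)$, hence commutes with both $\Phi(A)$ and $\Phi(A)^{-1}$. Setting $B=\Phi(A)>0$, the inequality $(M+m)I-B\le \frac{(M+m)^{2}}{4}B^{-1}$ is, after multiplication by $B$, equivalent to $0\le B^{2}-(M+m)B+\frac{(M+m)^{2}}{4}I=\left(B-\frac{M+m}{2}I\right)^{2}$, which is obviously true. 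Chaining the two displayed estimates gives $mM\,\Phi(A^{-1})\le \frac{(M+m)^{2}}{4}\Phi(A)^{-1}$, which is exactly the asserted bound.

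For the vector form, I would simply specialize to $\Phi(\cdot)=\langle\,\cdot\,x,x\rangle$ for a unit vector $x$, a positive unital linear map into the scalars, under which $\Phi(A^{-1})=\langle A^{-1}x,x\rangle$ and $\Phi(A)^{-1}=\langle Ax,x\rangle^{-1}$.

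The only genuine point requiring care—and the step I expect to be the main (albeit modest) obstacle—is the non-commutativity bookkeeping: at each multiplication one must verify that the inserted factor commutes with the operators already present, which it does because everything is built from $A$, respectively $\Phi(A)$, together with the identity. Once this is observed, the argument is entirely elementary.
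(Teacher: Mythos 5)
Your proof is correct, and it is essentially the classical argument for the operator Kantorovich inequality; the paper itself gives no proof here but simply cites Marshall--Olkin \cite{6}, where the same two ingredients appear: the spectral bound $A+mM\,A^{-1}\le (M+m)I$ (from $(MI-A)(A-mI)\ge 0$, or directly by functional calculus from the scalar inequality $\lambda+mM/\lambda\le M+m$ on $[m,M]$), followed by applying $\Phi$ and the quadratic estimate $(M+m)I-B\le \frac{(M+m)^{2}}{4}B^{-1}$ for $B=\Phi(A)>0$. One cosmetic remark: conjugation by $A^{-1/2}$ preserves the operator order for any invertible operator, so the commutativity you invoke is only needed to identify $A^{-1/2}A^{2}A^{-1/2}=A$, not to justify monotonicity; likewise your step with $B$ is cleanest viewed inside the commutative algebra generated by $B$, exactly as you indicate. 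The specialization $\Phi(\cdot)=\langle\,\cdot\,x,x\rangle$ for the vector form is the intended one.
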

We refer the reader to Corollary \ref{cor_kanto} below for the corresponding inequality that the spectral geometric mean fulfills, similar to Lemma \ref{2}.\\
While the inequality $A\leq B$ does not imply $A^2\leq B^2$, for arbitrary self-adjoint operators in general, the following useful weaker version holds.
\begin{lemma}\cite{3}\label{3}
Let $A,B\in \mathcal B\left( \mathcal H \right)$ be such that 	$0<A\le B$ and $m\le A\le M$. Then
\[{{A}^{2}}\le \frac{{{\left( M+m \right)}^{2}}}{4Mm}{{B}^{2}}.\]
\end{lemma}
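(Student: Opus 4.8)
The plan is to bypass any operator-monotonicity subtleties by reducing the claim to two elementary scalar quadratic estimates, the second of which is tuned precisely by the constant $\frac{(M+m)^{2}}{4Mm}$.

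First I would use the two-sided bound $m\le A\le M$. Since $MI-A$ and $A-mI$ are positive and commute (both being functions of $A$), their product is positive; expanding $(MI-A)(A-mI)\ge 0$ gives
\[A^{2}\le (M+m)A-MmI.\]
Then, invoking $A\le B$ together with $M+m>0$, I would replace $A$ by $B$ on the right-hand side to obtain
\[A^{2}\le (M+m)B-MmI.\]

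The decisive observation is that, with $c=\frac{(M+m)^{2}}{4Mm}$, the scalar polynomial $g(t)=ct^{2}-(M+m)t+Mm$ has vanishing discriminant, so that $g(t)=c\bigl(t-\tfrac{2Mm}{M+m}\bigr)^{2}\ge 0$ for every real $t$. Substituting $B$ for $t$ (legitimate since $B$ commutes with the identity) yields
\[(M+m)B-MmI\le \frac{(M+m)^{2}}{4Mm}\,B^{2},\]
and concatenating this with the previous display completes the proof. I expect no real obstacle: each monotonicity step involves only operators commuting with the quantities being added or subtracted, so no spectral-order pathologies arise, and the genuine (and classical) content is simply the recognition that the Kantorovich constant is exactly the value making the relevant discriminant equal to zero.
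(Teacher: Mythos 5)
Your argument is correct. Note that the paper itself gives no proof of this lemma (it is quoted from the reference of Fujii--Izumino--Nakamoto--Seo), so there is no in-text argument to compare against; what you supply is a complete, self-contained and essentially standard proof of such Kantorovich-type ``squaring'' results. Each step checks out: $(MI-A)(A-mI)\ge 0$ holds because both factors are positive and are functions of $A$, giving $A^{2}\le (M+m)A-MmI$; the passage from $A$ to $B$ on the right-hand side only uses $A\le B$ and the positive scalar $M+m$; and the final step is rigorous without any appeal to functional calculus, since with $c=\frac{(M+m)^{2}}{4Mm}$ you have the exact operator identity
\begin{equation*}
cB^{2}-(M+m)B+MmI=c\left(B-\tfrac{2Mm}{M+m}I\right)^{2}\ge 0,
\end{equation*}
the square of a self-adjoint operator being positive. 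The only (implicit) hypothesis you should make explicit is $0<m\le M$, which is what makes $M+m>0$ and $c>0$ meaningful, and which is the intended reading of the lemma.
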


\begin{lemma}\label{7}
\cite{5} Let $A,B\in \mathcal B\left( \mathcal H \right)$ be such that $m\le A,B\le M$ for some scalars $0\le m\le M$, and let $0\le r\le 1$. Then for any $0\le t\le 1$,
\[{{\left( A{{\nabla }_{t}}B \right)}^{r}}\le \frac{1}{K\left( m,M,t \right)}{{A}^{r}}{{\nabla }_{t}}{{B}^{r}}.\]
\end{lemma}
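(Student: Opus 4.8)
The plan is to derive the inequality from the single-operator reverse estimate of Lemma~\ref{12} by realizing the weighted arithmetic mean $\nabla_{t}$ as the image of a block-diagonal operator under a unital positive linear map; this is the standard device for lifting one-variable Kantorovich bounds to two-variable means. Concretely, I would pass to $\mathcal H\oplus\mathcal H$, put $\mathbb A=\begin{pmatrix}A&0\\0&B\end{pmatrix}$, and take the isometry $W:\mathcal H\to\mathcal H\oplus\mathcal H$, $Wx=\left(\sqrt{1-t}\,x,\sqrt{t}\,x\right)$, which satisfies $W^{*}W=I$ precisely because $(1-t)+t=1$. Then $\Phi(X):=W^{*}XW$ is a unital positive linear map from $\mathcal B(\mathcal H\oplus\mathcal H)$ into $\mathcal B(\mathcal H)$.

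The remaining steps are essentially bookkeeping. From $m\le A\le M$ and $m\le B\le M$ one gets $m\le\mathbb A\le M$ on $\mathcal H\oplus\mathcal H$. A one-line computation gives $\Phi(\mathbb A)=(1-t)A+tB=A\nabla_{t}B$, and, since $\mathbb A$ is block diagonal, $\mathbb A^{r}=\begin{pmatrix}A^{r}&0\\0&B^{r}\end{pmatrix}$, so $\Phi\!\left(\mathbb A^{r}\right)=(1-t)A^{r}+tB^{r}=A^{r}\nabla_{t}B^{r}$. Applying Lemma~\ref{12} to $\mathbb A$ (with $m\le\mathbb A\le M$), the map $\Phi$, and the exponent $r\in[0,1]$ then gives
\[
\left(A\nabla_{t}B\right)^{r}=\Phi(\mathbb A)^{r}\le\frac{1}{K(m,M,r)}\,\Phi\!\left(\mathbb A^{r}\right)=\frac{1}{K(m,M,r)}\left(A^{r}\nabla_{t}B^{r}\right),
\]
which is the asserted estimate.

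There is no real obstacle: once $\nabla_{t}$ is encoded through $W$, Lemma~\ref{12} supplies everything, and only the spectral bound $m\le\mathbb A\le M$ and the two identities for $\Phi(\mathbb A)$ and $\Phi\!\left(\mathbb A^{r}\right)$ remain to be checked, all of which are immediate. The one point worth a careful look is the index in the generalized Kantorovich constant: the argument produces $K(m,M,r)$, the exponent being the one fed into Lemma~\ref{12}. It is also worth noting that the reverse inequality $A^{r}\nabla_{t}B^{r}\le\left(A\nabla_{t}B\right)^{r}$ is immediate from the operator concavity of $x\mapsto x^{r}$ on $[0,\infty)$ via the Choi--Davis inequality (Lemma~\ref{lemm_choi}), so the lemma is precisely the Kantorovich-type reverse of that estimate. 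If one prefers to avoid $W$, the identical conclusion follows from the $2\times 2$ partition-of-unity map $\Phi(X)=(1-t)X_{11}+tX_{22}$ on block operators, which is manifestly unital and positive.
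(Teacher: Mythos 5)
Your dilation argument is sound and is the standard route to this kind of two--variable Kantorovich reverse: $W$ is an isometry, $\Phi(X)=W^{*}XW$ is unital and positive, $m\le\mathbb A\le M$, and $\Phi(\mathbb A)=A\nabla_t B$, $\Phi\left(\mathbb A^{r}\right)=A^{r}\nabla_t B^{r}$, so Lemma \ref{12} yields
\[
\left(A\nabla_t B\right)^{r}\le \frac{1}{K\left(m,M,r\right)}\,A^{r}\nabla_t B^{r}.
\]
(The only cosmetic point is that Lemma \ref{12} is stated for a map on $\mathcal B(\mathcal H)$ while you apply it to a map from $\mathcal B(\mathcal H\oplus\mathcal H)$ to $\mathcal B(\mathcal H)$; the Li--Mathias result holds in that generality, and your alternative map $\Phi(X)=(1-t)X_{11}+tX_{22}$ does the same job.) Note also that the paper contains no proof of this lemma --- it is quoted from \cite{5} --- so there is nothing internal to compare against; your argument is a legitimate self-contained proof.

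The issue you mention only in passing is, however, the crux and not mere bookkeeping: your argument produces $K(m,M,r)$, indexed by the \emph{power}, whereas the statement as printed has $K(m,M,t)$, indexed by the \emph{weight}. These constants are incomparable in general: on $[0,1]$ the function $p\mapsto K(m,M,p)$ equals $1$ at $p=0,1$ and is minimal at $p=1/2$, so for $r=1/2$ and $t$ near $0$ one has $K(m,M,r)<K(m,M,t)$ and your bound is strictly weaker than the printed one; in that regime your proof does not give the stated inequality. Nor can any proof, because the printed version fails already for commuting operators: with $m=1$, $M=100$, $A=1$, $B=100$ (scalars), $t=0.1$, $r=1/2$, one has $\left(A\nabla_t B\right)^{r}=\sqrt{10.9}\approx 3.30$, while $K(1,100,0.1)\approx 0.824$ gives $\frac{1}{K(1,100,0.1)}\left(A^{r}\nabla_t B^{r}\right)\approx 1.213\times 1.9\approx 2.31$. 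By contrast, your constant $\frac{1}{K(1,100,0.5)}\approx 1.739$ does dominate the ratio $\approx 1.738$ here, consistent with the $r$-indexed version being the correct (indeed essentially sharp) one. So the third argument of the Kantorovich constant in the lemma should be $r$, exactly as your derivation produces; with that correction your proof is complete, and the misprint propagates to the way Lemma \ref{7} is invoked in the proof of Theorem \ref{AH_SGM_theorem01}, where the constants would need the same adjustment.
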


\begin{lemma}\label{6}
\cite{9} Let $A\in \mathcal B\left( \mathcal H \right)$ be a positive semi-definite operator. If $x\in \mathcal H$ is a unit vector, then
\begin{itemize}
\item[(i)] ${{\left\langle Ax,x \right\rangle }^{r}}\le \left\langle {{A}^{r}}x,x \right\rangle $ for all $r\ge 1$.
\item[(ii)] $\left\langle {{A}^{r}}x,x \right\rangle \le {{\left\langle Ax,x \right\rangle }^{r}}$ for all $0\le r\le 1$.
\item[(iii)] If A is invertible, then ${{\left\langle Ax,x \right\rangle }^{r}}\le \left\langle {{A}^{r}}x,x \right\rangle $ for all $r\le 0$.
\end{itemize}
\end{lemma}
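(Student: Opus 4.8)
The plan is to deduce all three inequalities from a single source, namely the ordinary (scalar) Jensen inequality applied to the spectral measure of $A$ at the unit vector $x$. By the spectral theorem, write $A=\int_{\sigma(A)}\lambda\,dE(\lambda)$ and put $\mu_x(\cdot)=\langle E(\cdot)x,x\rangle$. Since $\|x\|=1$ and $A$ is positive semi-definite, $\mu_x$ is a Borel probability measure supported in $[0,\infty)$, and for every continuous function $g$ on $\sigma(A)$ one has $\langle g(A)x,x\rangle=\int g(\lambda)\,d\mu_x(\lambda)$. In particular $\langle Ax,x\rangle=\int\lambda\,d\mu_x(\lambda)$ and $\langle A^{r}x,x\rangle=\int\lambda^{r}\,d\mu_x(\lambda)$; for the case $r\le 0$ the invertibility assumption guarantees $\sigma(A)\subset(0,\infty)$, so $\lambda\mapsto\lambda^{r}$ is continuous on the support of $\mu_x$ and the last identity still makes sense.

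Next I would invoke the convexity behaviour of the power function $\phi_r(\lambda)=\lambda^{r}$. For $r\ge 1$ the function $\phi_r$ is convex on $[0,\infty)$, so Jensen's inequality gives $\phi_r\big(\int\lambda\,d\mu_x\big)\le\int\phi_r(\lambda)\,d\mu_x$, which is exactly $\langle Ax,x\rangle^{r}\le\langle A^{r}x,x\rangle$, proving (i). For $0\le r\le 1$ the function $\phi_r$ is concave on $[0,\infty)$, and Jensen's inequality reverses, yielding $\langle A^{r}x,x\rangle\le\langle Ax,x\rangle^{r}$, which is (ii). Finally, for $r\le 0$ the function $\phi_r$ is again convex on $(0,\infty)$; applying Jensen's inequality to $\mu_x$ (supported in $(0,\infty)$ by invertibility) gives $\langle Ax,x\rangle^{r}\le\langle A^{r}x,x\rangle$, which is (iii). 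The boundary exponents $r=0$ and $r=1$ give equality and need no argument.

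There is essentially no serious obstacle here; the only points requiring a word of care are the measure-theoretic set-up (checking that $\mu_x$ is genuinely a probability measure, which uses $\|x\|=1$) and the role of invertibility in part (iii), where it is needed both to make $A^{r}$ well defined and to keep $\phi_r$ finite and convex on the support of $\mu_x$. One could alternatively phrase the argument via the Choi--Davis inequality (Lemma~\ref{lemm_choi}) applied to the unital positive linear map $a\mapsto\langle ax,x\rangle$ with values in $\mathbb{C}\cong\mathcal B(\mathbb C)$; but that formulation is stated for operator convex/concave functions, whereas here only ordinary convexity/concavity of $\phi_r$ is available over the full ranges of $r$ in question, so the direct spectral-measure computation is the cleaner and more self-contained route.
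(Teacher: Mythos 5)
Your argument is correct: the paper offers no proof of this lemma (it is quoted from McCarthy's paper), and your spectral-measure/Jensen computation is exactly the standard proof of the H\"older--McCarthy inequality, with the convexity of $\lambda\mapsto\lambda^r$ for $r\ge 1$ and $r\le 0$ (the latter on $(0,\infty)$, using invertibility) and its concavity for $0\le r\le 1$ correctly matched to the three cases. Your closing caveat about Lemma~\ref{lemm_choi} is slightly over-cautious --- for a scalar-valued state $a\mapsto\langle ax,x\rangle$ ordinary convexity already suffices in the Davis--Choi--Jensen scheme, which is precisely your Jensen argument in disguise --- but the direct route you chose is complete and self-contained.
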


We notice that the function $f(x)=x^t, 0\leq t\leq 1$ is operator monotone on $[0,\infty).$ This together with Lemma \ref{11} implies that 
\begin{equation}\label{needed_eq_11}
\Phi(A\sharp B)^t\leq \left(\Phi(A)\sharp\Phi(B)\right)^t, 0\leq t\leq 1
\end{equation}
for any positive definite operators $A,B.$
\section{Main results}
In this section, we present our main results, where we show some comparisons between the geometric and spectral geometric means first, then we discuss some Ando-type inequalities for the spectral geometric mean.

\subsection{Some comparisons between $A\sharp_tB$ and $A\natural_tB$}

The first result is the spectral geometric version of \eqref{eq_inner_geo}. For this, we notice first that if $m,M$ are positive scalars and $A,B\in\mathcal{B}(\mathcal{H})^+$ are such that $m\leq A,B\leq M$, then $M^{-1}\leq A^{-1}\leq m^{-1}$, which in turns imply 
\begin{equation}\label{needed_root}
\sqrt{\frac{m}{M}}\leq A^{-1}\sharp B\leq \sqrt{\frac{M}{m}},
\end{equation}
as a consequence of the monotone property of the geometric mean.
\begin{theorem}\label{4}
Let $A,B\in \mathcal B\left( \mathcal H \right)^+$  be such that $m\le A,B\le M$, and let $0\le t\le 1$. Then for any unit vector $x\in \mathcal H$,
\[\left\langle A{{\natural}_{t}}Bx,x \right\rangle \le \frac{{{\left( {{M}^{1+t}}+{{m}^{1+t}} \right)}^{2}}}{4{{M}^{1+t}}{{m}^{1+t}}}{{\left\langle Ax,x \right\rangle }^{1-t}}{{\left\langle Bx,x \right\rangle }^{t}}.\]
\end{theorem}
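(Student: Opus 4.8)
The plan is to reduce the statement to the geometric‑mean inequality \eqref{eq_inner_geo} by first comparing $A\natural_t B$ with $A\sharp_t B$. Write $C=A^{-1}\sharp B$, so that $A\natural_t B=C^{t}AC^{t}$; the defining property of the geometric mean gives $CAC=B$, and monotonicity of $\sharp$ in both entries yields $\sqrt{m/M}\,I\le C\le\sqrt{M/m}\,I$. From $CAC=B$ and $m\le A,B\le M$ one also reads off $M^{-1}B\le C^{2}\le m^{-1}B$, and, since $B\mapsto A^{1/2}BA^{1/2}$ carries $[m,M]$ into $[m^{2},M^{2}]$, the operator $A^{1/2}BA^{1/2}$ has spectrum in $[m^{2},M^{2}]$.

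The core step is the operator inequality
\[
A\natural_t B\ \le\ K\!\left(m^{1+t},\,M^{1+t},\,2\right)\,A\sharp_t B .
\]
To obtain it I would apply the congruence $X\mapsto A^{-1/2}XA^{-1/2}$, which turns $A\sharp_t B$ into $(A^{-1/2}BA^{-1/2})^{t}$ and $A\natural_t B$ into $A^{-1/2}C^{t}AC^{t}A^{-1/2}$, and then bound the latter from above by the former up to a Kantorovich factor. The exponent $1+t$ in the constant appears because the squaring estimate of Lemma \ref{3} gets applied to (essentially) the $\tfrac{1+t}{2}$‑th power of $A^{1/2}BA^{1/2}$, an operator whose spectrum therefore lies in $\bigl[(m^{2})^{(1+t)/2},(M^{2})^{(1+t)/2}\bigr]=[m^{1+t},M^{1+t}]$, so that Lemma \ref{3} contributes precisely $K(m^{1+t},M^{1+t},2)=\dfrac{(M^{1+t}+m^{1+t})^{2}}{4M^{1+t}m^{1+t}}$. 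With the operator inequality in hand, the theorem is immediate: for a unit vector $x$,
\[
\langle A\natural_t B\,x,x\rangle\ \le\ K\!\left(m^{1+t},M^{1+t},2\right)\langle A\sharp_t B\,x,x\rangle\ \le\ K\!\left(m^{1+t},M^{1+t},2\right)\langle Ax,x\rangle^{1-t}\langle Bx,x\rangle^{t},
\]
the last inequality being \eqref{eq_inner_geo} of Lemma \ref{11}.

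The hard part is the operator comparison between $A\natural_t B$ and $A\sharp_t B$. Since $C^{t}$ commutes with neither $A$ nor $B$, the naive route---bounding $C^{t}AC^{t}\le M\,C^{2t}$ and then $C^{2t}\le m^{-t}B^{t}$, or passing through $A\natural_t B\le M\,(A^{-1}\sharp B)$ and invoking Lemma \ref{2}---loses too much and only delivers the far larger constant $M/m$. Extracting the sharp $K(m^{1+t},M^{1+t},2)$ forces one to use the genuine Kantorovich gain of Lemma \ref{3}, and the delicate point is to organize the reduction so that the operator being squared has its spectrum confined to $[m^{1+t},M^{1+t}]$ rather than to a wider interval produced by extra non‑commutative factors.
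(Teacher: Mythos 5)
Your reduction of the theorem to \eqref{eq_inner_geo} would be fine \emph{if} the operator inequality $A\natural_t B\le K\left(m^{1+t},M^{1+t},2\right)A\sharp_t B$ were available, but that inequality --- the entire content of the argument --- is never proved. The sketch (``apply the congruence $X\mapsto A^{-1/2}XA^{-1/2}$ \dots then bound the latter from above by the former up to a Kantorovich factor,'' with Lemma \ref{3} applied to ``essentially'' the $\frac{1+t}{2}$-th power of $A^{1/2}BA^{1/2}$) never identifies the pair of operators to which Lemma \ref{3} would be applied, and Lemma \ref{3} requires the \emph{un-squared} ordering $0<X\le Y$ to hold exactly before it pays the Kantorovich penalty; establishing such an ordering between (square roots of) expressions built from the non-commuting $C^{t}AC^{t}$ and $\left(A^{-1/2}BA^{-1/2}\right)^{t}$ is precisely the difficulty you acknowledge in your last paragraph, and nothing in the proposal resolves it. Note also that your claimed operator inequality is \emph{stronger} than anything the paper obtains: Corollary \ref{cor_spec_less_geo}, which the authors derive \emph{from} Theorem \ref{4}, carries the additional factor $1/K\!\left(\frac{m}{M},\frac{M}{m},t\right)\ge 1$, so you are implicitly asserting, without proof, a sharpening of that corollary whose validity is not evident; in any case the logical order is reversed (the operator comparison comes \emph{after}, not before, the scalar inequality you are asked to prove).

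The paper avoids the operator-level comparison altogether and argues at the level of inner products: writing $X=A\natural_t B$ and using the characterization $\left(A^{-1}\sharp B\right)^{t}=A^{-1}\sharp X$, it bounds $\left\langle A^{-1}\sharp X\,x,x\right\rangle$ from below by $\frac{2\sqrt{M^{1+t}m^{1+t}}}{M^{1+t}+m^{1+t}}\sqrt{\left\langle A^{-1}x,x\right\rangle\left\langle Xx,x\right\rangle}$ via the reverse Cauchy--Schwarz inequality of Lemma \ref{10}, applied with the bounds $\frac1M\le A^{-1}\le\frac1m$ and $\frac{m^{1+t}}{M^{t}}\le X\le\frac{M^{1+t}}{m^{t}}$, and from above by $\left\langle A^{-1}x,x\right\rangle^{t/2}\left\langle Bx,x\right\rangle^{t/2}$ via McCarthy's inequality (Lemma \ref{6}) and \eqref{eq_inner_geo}; solving for $\left\langle Xx,x\right\rangle$ and using $\left\langle A^{-1}x,x\right\rangle^{t-1}\le\left\langle Ax,x\right\rangle^{1-t}$ yields exactly the constant $\frac{\left(M^{1+t}+m^{1+t}\right)^{2}}{4M^{1+t}m^{1+t}}$. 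If you wish to salvage your outline, switch to this scalar route (your preliminary observations $CAC=B$ and the spectral bounds on $X$ and $A^{-1}$ are the right ingredients for it); as written, the key step of your proposal is a statement of intent rather than a proof.
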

\begin{proof}
Using \eqref{needed_root} and by conjugation with $(A^{-1}\sharp B)^t$ we get
\[\frac{{{m}^{1+t}}}{{{M}^{t}}}\le X:={{\left( {{A}^{-1}}\sharp B \right)}^{t}}A{{\left( {{A}^{-1}}\sharp B \right)}^{t}}\le \frac{{{M}^{1+t}}}{{{m}^{t}}}.\]

On the other hand, we know that $A{{\natural}_{t}}B$ is a unique positive definite solution $X$ of the following equation
\begin{equation}\label{eq_for_spec}
{{\left( {{A}^{-1}}\sharp B \right)}^{t}}={{A}^{-1}}\sharp X.
\end{equation}
Let $X=A{{\natural}_{t}}B$. For any unit vector $x\in \mathcal H$,
\[\begin{aligned}
   \xi \sqrt{\left\langle {{A}^{-1}}x,x \right\rangle \left\langle Xx,x \right\rangle }&\le \left\langle {{A}^{-1}}\sharp Xx,x \right\rangle  \quad \text{(by Lemma \ref{10})}\\ 
 & =\left\langle {{\left( {{A}^{-1}}\sharp B \right)}^{t}}x,x \right\rangle  \\ 
 & \le {{\left\langle \left( {{A}^{-1}}\sharp B \right)x,x \right\rangle }^{t}} \quad \text{(by Lemma \ref{6})}\\ 
 & \le {{\left\langle {{A}^{-1}}x,x \right\rangle }^{\frac{t}{2}}}{{\left\langle Bx,x \right\rangle }^{\frac{t}{2}}} \quad \text{(by Lemma \ref{11})},
\end{aligned}\]
where $\xi =\dfrac{2\sqrt{{{M}^{1+t}}{{m}^{1+t}}}}{{{M}^{1+t}}+{{m}^{1+t}}}$.
Thus,
\[\left\langle Xx,x \right\rangle \le \frac{1}{{{\xi }^{2}}}{{\left\langle {{A}^{-1}}x,x \right\rangle }^{t-1}}{{\left\langle Bx,x \right\rangle }^{t}},\]
which implies,
\[\left\langle A{{\natural}_{t}}Bx,x \right\rangle \le \frac{1}{{{\xi }^{2}}}{{\left\langle {{A}^{-1}}x,x \right\rangle }^{t-1}}{{\left\langle Bx,x \right\rangle }^{t}}.\]
Again by Lemma \ref{6},
\[\left\langle A{{\natural}_{t}}Bx,x \right\rangle \le \frac{1}{{{\xi }^{2}}}{{\left\langle Ax,x \right\rangle }^{1-t}}{{\left\langle Bx,x \right\rangle }^{t}}.\]

\end{proof}

The relation between weighted spectral geometric mean, arithmetic mean, and harmonic mean is stated in the following result, in a way that simulates \eqref{eq_means_comp} for the geometric mean. This follows from Theorem \ref{4}, the weighted arithmetic-geometric mean, and the fact that ${{\left( {{A}^{-1}}{{\natural}_{t}}{{B}^{-1}} \right)}^{-1}}=A{{\natural}_{t}}B$.
\begin{corollary}\label{1}
Let $A,B\in \mathcal B\left( \mathcal H \right)^+$ be such that $0<m\le A,B\le M$, and let $0\le t\le 1$. Then
\[\frac{4{{M}^{1+t}}{{m}^{1+t}}}{{{\left( {{M}^{1+t}}+{{m}^{1+t}} \right)}^{2}}}A{{!}_{t}}B\le A{{\natural }_{t}}B\le \frac{{{\left( {{M}^{1+t}}+{{m}^{1+t}} \right)}^{2}}}{4{{M}^{1+t}}{{m}^{1+t}}}A{{\nabla }_{t}}B.\]
\end{corollary}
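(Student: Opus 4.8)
The plan is to prove the two bounds separately: the right-hand inequality comes straight out of Theorem \ref{4} combined with the scalar weighted arithmetic--geometric mean inequality, and the left-hand inequality is then deduced from the right-hand one via the inversion duality $(A^{-1}\natural_t B^{-1})^{-1}=A\natural_t B$.

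\textbf{Upper bound.} I would first upgrade Theorem \ref{4} from a quadratic-form estimate to an operator inequality. For any unit vector $x$, Theorem \ref{4} gives
\[\langle A\natural_t B\,x,x\rangle\le\frac{(M^{1+t}+m^{1+t})^2}{4M^{1+t}m^{1+t}}\,\langle Ax,x\rangle^{1-t}\langle Bx,x\rangle^{t}.\]
Applying the numerical inequality $a^{1-t}b^{t}\le(1-t)a+tb$ with $a=\langle Ax,x\rangle$ and $b=\langle Bx,x\rangle$ bounds the right-hand side above by $\frac{(M^{1+t}+m^{1+t})^2}{4M^{1+t}m^{1+t}}\langle(A\nabla_tB)x,x\rangle$. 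Since this holds for every unit vector $x$, we conclude
\[A\natural_t B\le\frac{(M^{1+t}+m^{1+t})^2}{4M^{1+t}m^{1+t}}\,A\nabla_t B.\]

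\textbf{Lower bound.} From $0<m\le A,B\le M$ we get $\tfrac1M\le A^{-1},B^{-1}\le\tfrac1m$, so the operator inequality just proved, applied to the pair $A^{-1},B^{-1}$ with $m,M$ replaced by $\tfrac1M,\tfrac1m$, yields
\[A^{-1}\natural_t B^{-1}\le\frac{\big((1/m)^{1+t}+(1/M)^{1+t}\big)^2}{4(1/m)^{1+t}(1/M)^{1+t}}\,A^{-1}\nabla_t B^{-1}.\]
A one-line simplification (multiply numerator and denominator by $(mM)^{2(1+t)}$) shows that this constant again equals $\frac{(M^{1+t}+m^{1+t})^2}{4M^{1+t}m^{1+t}}$. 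Taking inverses reverses the order; using $A!_tB=(A^{-1}\nabla_tB^{-1})^{-1}$ together with the stated identity $(A^{-1}\natural_tB^{-1})^{-1}=A\natural_tB$ then gives
\[\frac{4M^{1+t}m^{1+t}}{(M^{1+t}+m^{1+t})^2}\,A!_t B\le A\natural_t B,\]
which is the desired left-hand inequality.

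The only subtle point is the very first step: Theorem \ref{4} is stated as a quadratic-form inequality whose right-hand side $\langle Ax,x\rangle^{1-t}\langle Bx,x\rangle^{t}$ is not itself the quadratic form of an operator, so one cannot pass to an operator inequality directly; interposing the scalar arithmetic--geometric mean inequality is exactly what makes this passage legitimate. Everything that follows — reusing the resulting operator inequality for $A^{-1},B^{-1}$, the order-reversing property of inversion on positive operators, and checking that the Kantorovich-type constant is invariant under $(m,M)\mapsto(1/M,1/m)$ — is routine.
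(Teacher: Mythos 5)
Your proposal is correct and follows exactly the route the paper intends: Theorem \ref{4} plus the scalar weighted arithmetic--geometric mean inequality gives the upper bound as an operator inequality, and the lower bound follows by applying it to $A^{-1},B^{-1}$ and using $\left(A^{-1}\natural_t B^{-1}\right)^{-1}=A\natural_t B$ together with $A!_tB=\left(A^{-1}\nabla_tB^{-1}\right)^{-1}$ and the invariance of the constant under $(m,M)\mapsto(1/M,1/m)$. The paper only sketches this in one sentence, and your write-up (including the remark on passing from the quadratic-form estimate to an operator inequality) fills in the same details correctly.
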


\begin{remark}
The inequalities in Corollary \ref{1} can be improved as follows. Recall the following refinement of the weighted arithmetic--geometric mean inequality \cite{8}
	\[{{\left( \frac{{{\left( 1+h \right)}^{2}}}{4h} \right)}^{r}}{{a}^{1-t}}{{b}^{t}}\le \left( \left( 1-t \right)a+tb \right)\]
where $h={b}/{a}\;$ and $r=\min \left\{ t,1-t \right\}$. Let $x$ be a unit vector. If we replace $a$ and $b$ by $\left\langle Ax,x \right\rangle $ and $\left\langle Bx,x \right\rangle $ with $m\le A\le m'<M'\le B\le M$ and  noting that the function \[f\left( h \right)=\frac{{{\left( 1+h \right)}^{2}}}{4h}\] 
is increasing for $h\ge 1,$ we get
	\[{{\left( \frac{{{\left( M'+m' \right)}^{2}}}{4M'm'} \right)}^{r}}{{\left\langle Ax,x \right\rangle }^{1-t}}{{\left\langle Bx,x \right\rangle }^{t}}\le \left\langle \left( A{{\nabla }_{t}}B \right)x,x \right\rangle.\]
Thus,
	\[A{{\natural }_{t}}B\le \frac{{{\left( {{M}^{1+t}}+{{m}^{1+t}} \right)}^{2}}}{4{{M}^{1+t}}{{m}^{1+t}}}{{\left( \frac{4M'm'}{{{\left( M'+m' \right)}^{2}}} \right)}^{r}}A{{\nabla }_{t}}B,\]
whenever $m\le A\le m'<M'\le B\le M$. The same inequality holds when $m\le B\le m'<M'\le A\le M$.
\end{remark}

The following result intends to give a relationship between the weighted spectral geometric mean and the weighted geometric mean.
\begin{corollary}\label{cor_spec_less_geo}
Let $A,B\in \mathcal B\left( \mathcal H \right)$ be such that $0<m\le A,B\le M$, and let $0\le t\le 1$. Then
\[A{{\natural }_{t}}B\le \frac{1}{K\left( \frac{m}{M},\frac{M}{m},t \right)}\frac{{{\left( {{M}^{1+t}}+{{m}^{1+t}} \right)}^{2}}}{4{{M}^{1+t}}{{m}^{1+t}}}A{{\sharp }_{t}}B.\]
\end{corollary}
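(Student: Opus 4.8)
The plan is to derive the operator inequality from its scalar (quadratic form) version, obtained by pairing Theorem \ref{4} with Lemma \ref{5} applied to vector states. The first observation is that the hypothesis $0<m\le A,B\le M$ already encodes a relative bound between $A$ and $B$: since $B\ge m=\tfrac{m}{M}\cdot M\ge \tfrac{m}{M}A$ and $B\le M=\tfrac{M}{m}\cdot m\le \tfrac{M}{m}A$, we have $\tfrac{m}{M}A\le B\le \tfrac{M}{m}A$. Now fix a unit vector $x\in\mathcal H$ and apply Lemma \ref{5} with $\Phi$ the vector state $\Phi(T)=\left\langle Tx,x\right\rangle$ (a unital positive linear map of $\mathcal B(\mathcal H)$ into $\mathcal B(\mathbb C)\cong\mathbb C$, unital precisely because $\|x\|=1$), and with the scalars $\tfrac{m}{M}$ and $\tfrac{M}{m}$ in place of $m$ and $M$. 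Using that the weighted geometric mean of positive scalars is $a\sharp_t b=a^{1-t}b^{t}$, this yields
\[
\left\langle Ax,x\right\rangle^{1-t}\left\langle Bx,x\right\rangle^{t}\ \le\ \frac{1}{K\!\left(\tfrac{m}{M},\tfrac{M}{m},t\right)}\,\left\langle A\sharp_t B\,x,x\right\rangle .
\]

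Next I would invoke Theorem \ref{4}, which under the present hypotheses gives, for the same unit vector $x$,
\[
\left\langle A\natural_t B\,x,x\right\rangle\ \le\ \frac{\left(M^{1+t}+m^{1+t}\right)^{2}}{4M^{1+t}m^{1+t}}\,\left\langle Ax,x\right\rangle^{1-t}\left\langle Bx,x\right\rangle^{t}.
\]
Chaining the two displayed inequalities produces
\[
\left\langle A\natural_t B\,x,x\right\rangle\ \le\ \frac{1}{K\!\left(\tfrac{m}{M},\tfrac{M}{m},t\right)}\cdot\frac{\left(M^{1+t}+m^{1+t}\right)^{2}}{4M^{1+t}m^{1+t}}\,\left\langle A\sharp_t B\,x,x\right\rangle
\]
for \emph{every} unit vector $x\in\mathcal H$. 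Since both $A\natural_t B$ and $A\sharp_t B$ are self-adjoint, passing to the supremum over unit vectors (equivalently, comparing quadratic forms) upgrades this scalar inequality to the asserted operator inequality.

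I do not anticipate a genuine obstacle: the whole argument is a substitution into two results already available. The only points requiring a word of care are that the vector state is a legitimate map for Lemma \ref{5} (it is manifestly positive and linear, and unital because $x$ is a unit vector), and that one should dispose of the trivial case $m=M$ separately (then $A=B=mI$ and the claim is immediate), so that in the interesting case $m<M$ the quantity $K\!\left(\tfrac{m}{M},\tfrac{M}{m},t\right)$ is strictly positive and the reciprocal is meaningful.
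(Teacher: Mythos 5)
Your proposal is correct and follows essentially the same route as the paper: deduce $\tfrac{m}{M}A\le B\le \tfrac{M}{m}A$, apply Lemma \ref{5} to vector states to get $\left\langle Ax,x\right\rangle^{1-t}\left\langle Bx,x\right\rangle^{t}\le K\left(\tfrac{m}{M},\tfrac{M}{m},t\right)^{-1}\left\langle A\sharp_t B\,x,x\right\rangle$, and chain this with Theorem \ref{4} to compare the quadratic forms of the two self-adjoint operators for every unit vector. Your added remarks (unitality of the vector state, the trivial case $m=M$) are harmless clarifications of steps the paper leaves implicit.
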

\begin{proof}
If $m\le A,B\le M$, then $\frac{m}{M}A\le B\le \frac{M}{m}A$. In this case, by Lemma \ref{5}, we get
\[\Phi \left( A \right){{\sharp}_{t}}\Phi \left( B \right)\le \frac{1}{K\left( \frac{m}{M},\frac{M}{m},t \right)}\Phi \left( A{{\sharp }_{t}}B \right),\]
for any positive unital linear mapping $\Phi.$
In particular,
\[{{\left\langle Ax,x \right\rangle }^{1-t}}{{\left\langle Bx,x \right\rangle }^{t}}\le \frac{1}{K\left( \frac{m}{M},\frac{M}{m},t \right)}\left\langle A{{\sharp}_{t}}Bx,x \right\rangle\]
for any unit vector $x\in \mathcal H$.
Now, by Theorem \ref{4}, we have
\[A{{\natural }_{t}}B\le \frac{1}{K\left( \frac{m}{M},\frac{M}{m},t \right)}\frac{{{\left( {{M}^{1+t}}+{{m}^{1+t}} \right)}^{2}}}{4{{M}^{1+t}}{{m}^{1+t}}}A{{\sharp }_{t}}B.\]
\end{proof}

A reverse of Theorem \ref{4} can be stated as follows.
\begin{theorem}
Let $A,B\in \mathcal B\left( \mathcal H \right)$ be such that $0<m\le A,B\le M$, and let $0\le t\le 1$. Then for any unit vector $x\in \mathcal H$,
\[{{\left\langle Ax,x \right\rangle }^{1-t}}{{\left\langle Bx,x \right\rangle }^{t}}\le \frac{1}{{{K}^{2}}\left( \sqrt{\frac{m}{M}},\sqrt{\frac{M}{m}},t \right)}{{ \frac{{{\left( M+m \right)}^{2}}}{4Mm}}}\left\langle A{{\natural }_{t}}Bx,x \right\rangle,\]
\end{theorem}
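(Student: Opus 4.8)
The plan is to reverse‐engineer the chain of inequalities used in the proof of Theorem \ref{4}, replacing each estimate with its known reverse. Recall from \eqref{eq_for_spec} that $X = A\natural_t B$ is the unique positive definite solution of $(A^{-1}\sharp B)^t = A^{-1}\sharp X$. The idea is to start from
\[
\left\langle A^{-1}\sharp X\, x,x\right\rangle = \left\langle (A^{-1}\sharp B)^t x,x\right\rangle,
\]
bound the left‐hand side \emph{from above} by $\sqrt{\langle A^{-1}x,x\rangle\langle Xx,x\rangle}$ (this direction is just Lemma \ref{11} applied to $A^{-1}$ and $X$, since $A^{-1}\sharp X$ is the $t=\tfrac12$ geometric mean), and bound the right‐hand side \emph{from below}. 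For the latter, I would first use the reverse of Lemma \ref{6}(ii), i.e. Lemma \ref{12} with exponent $t$, to get
\[
\left\langle (A^{-1}\sharp B)^t x,x\right\rangle \ge K(m',M',t)\,\left\langle (A^{-1}\sharp B)x,x\right\rangle^t,
\]
where $m' \le A^{-1}\sharp B \le M'$ with $m' = m/M$ and $M' = M/m$ coming from $m\le A,B\le M$; note $K(m/M,M/m,t) = K(\sqrt{m/M}\cdot\sqrt{m/M},\ldots)$ — here one must be careful to track that after taking square roots inside a later step the relevant constant becomes $K(\sqrt{m/M},\sqrt{M/m},t)^2$ as in the statement. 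Then bound $\langle(A^{-1}\sharp B)x,x\rangle$ from below using the reverse of Lemma \ref{11}, which is Lemma \ref{10} (the Fiedler–Pták / Mond–Pečarić reverse): with $1/M\le A^{-1}\le 1/m$ and $m\le B\le M$,
\[
\left\langle (A^{-1}\sharp B)x,x\right\rangle \ge \frac{2\sqrt[4]{(1/M)(1/m)\,Mm}}{\sqrt{(1/m)M}+\sqrt{(1/M)m}}\,\sqrt{\langle A^{-1}x,x\rangle\langle Bx,x\rangle}
= \frac{2}{\,\frac{M}{m}+\frac{m}{M}\,}\cdot\frac{(M+m)^2}{?}\ldots
\]
which simplifies to a multiple of $\big(\tfrac{4Mm}{(M+m)^2}\big)$ type factor; raising to the power $t/2$ and combining produces the $\big(\tfrac{(M+m)^2}{4Mm}\big)^{t}$ part. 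Finally, the leftover factor $\langle A^{-1}x,x\rangle$ must be traded for $\langle Ax,x\rangle^{-1}$ via Lemma \ref{2}, which contributes exactly one more factor of $\tfrac{(M+m)^2}{4Mm}$, giving the exponent $1+t$ overall.

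So the skeleton is: (1) write the defining equation for $X=A\natural_t B$; (2) $\sqrt{\langle A^{-1}x,x\rangle\langle Xx,x\rangle}\ge\langle A^{-1}\sharp X\,x,x\rangle=\langle(A^{-1}\sharp B)^t x,x\rangle$ by Lemma \ref{11}; (3) lower‐bound $\langle(A^{-1}\sharp B)^t x,x\rangle$ by Lemma \ref{12}; (4) lower‐bound $\langle(A^{-1}\sharp B)x,x\rangle$ by Lemma \ref{10}; (5) rearrange to isolate $\langle Xx,x\rangle$, producing $\langle A^{-1}x,x\rangle^{t-1}\langle Bx,x\rangle^t$ times constants; (6) apply Lemma \ref{2} to replace $\langle A^{-1}x,x\rangle^{1-t}$ by $\big(\tfrac{(M+m)^2}{4Mm}\big)^{1-t}\langle Ax,x\rangle^{-(1-t)}$ — wait, more carefully, after step (5) one has $\langle Ax,x\rangle^{1-t}\langle Bx,x\rangle^t$ on one side with the $\langle A^{-1}x,x\rangle$ already converted, so Lemma \ref{6}(iii) or Lemma \ref{2} enters once with the exponent bookkeeping giving $1+t$.

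The main obstacle will be the constant bookkeeping: keeping straight which bounds $m', M'$ attach to $A^{-1}\sharp B$, correctly simplifying the Lemma \ref{10} constant $\frac{\sqrt{M_1M_2}+\sqrt{m_1m_2}}{2\sqrt[4]{M_1m_1M_2m_2}}$ when $M_1M_2 = m_1m_2 = 1$ (which makes it $\frac{M+m}{2\sqrt{Mm}}$ up to the relevant substitution), and verifying that the powers of the Kantorovich constant combine into exactly $K^2(\sqrt{m/M},\sqrt{M/m},t)$ in the denominator and $\big(\tfrac{(M+m)^2}{4Mm}\big)^{1+t}$ in the numerator rather than something off by a factor. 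A secondary subtlety is that Lemma \ref{10} requires sandwiching bounds for \emph{both} operands; here $A^{-1}$ is automatically controlled by $1/M \le A^{-1}\le 1/m$, and $B$ by hypothesis, so this is fine, but one should double‐check the direction (Lemma \ref{10} is itself a reverse inequality, so it points the right way for us). Once the arithmetic is pinned down the argument is a routine dualization of Theorem \ref{4}.
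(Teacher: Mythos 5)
Your outline reproduces the paper's own proof step for step: start from the defining equation $(A^{-1}\sharp B)^t=A^{-1}\sharp X$ with $X=A\natural_t B$, bound $\left\langle A^{-1}\sharp X\,x,x\right\rangle$ from above by Lemma \ref{11}, bound $\left\langle (A^{-1}\sharp B)^t x,x\right\rangle$ from below by Lemma \ref{12} and then by Lemma \ref{10}, square, and finish with Lemma \ref{2}. So the route is not different; the issue is a genuine gap in the one place the stated constant actually comes from. The correct sandwich for $A^{-1}\sharp B$ follows from $1/M\le A^{-1}\le 1/m$, $m\le B\le M$ and monotonicity of the geometric mean: $\sqrt{m/M}=(1/M)\sharp m\le A^{-1}\sharp B\le (1/m)\sharp M=\sqrt{M/m}$. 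Feeding these square-root bounds into Lemma \ref{12} is what produces $K\left(\sqrt{m/M},\sqrt{M/m},t\right)$, and the square on $K$ then arises simply from squaring the resulting inequality $\sqrt{\left\langle A^{-1}x,x\right\rangle\left\langle Xx,x\right\rangle}\ge K\left(\sqrt{m/M},\sqrt{M/m},t\right)\left(\frac{2\sqrt{Mm}}{M+m}\right)^t\left\langle A^{-1}x,x\right\rangle^{t/2}\left\langle Bx,x\right\rangle^{t/2}$ --- not from ``taking square roots inside a later step.'' Your sandwich $m/M\le A^{-1}\sharp B\le M/m$ is valid but strictly looser: Lemma \ref{12} then only yields the factor $K(m/M,M/m,t)$, and after squaring you obtain $1/K^{2}(m/M,M/m,t)$, which for $0\le t\le 1$ is in general larger than $1/K^{2}\left(\sqrt{m/M},\sqrt{M/m},t\right)$ (Corollary \ref{corollary01}, since the ratio $(M/m)^2$ exceeds $M/m$; equivalently Corollary \ref{cor01}). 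So, as written, your argument proves a strictly weaker inequality than the one stated, and the identity $K(m/M,M/m,t)=K^{2}\left(\sqrt{m/M},\sqrt{M/m},t\right)$ that your remark implicitly relies on is false.

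The rest of the bookkeeping, which you defer as ``the main obstacle,'' does close, but it has to be carried out rather than gestured at (one of your displays literally ends in ``$?$''). Lemma \ref{10} with $m_1=1/M$, $M_1=1/m$, $m_2=m$, $M_2=M$ gives the factor $\frac{2\sqrt{Mm}}{M+m}$ because $M_1m_1M_2m_2=1$; squaring and dividing by $\left\langle A^{-1}x,x\right\rangle$ yields
\[
\left\langle A\natural_t B\,x,x\right\rangle\ \ge\ K^{2}\left(\sqrt{\tfrac{m}{M}},\sqrt{\tfrac{M}{m}},t\right)\left(\tfrac{4Mm}{(M+m)^2}\right)^{t}\left\langle A^{-1}x,x\right\rangle^{t-1}\left\langle Bx,x\right\rangle^{t},
\]
and Lemma \ref{2}, raised to the power $1-t\ge 0$, gives $\left\langle Ax,x\right\rangle^{1-t}\le\left(\tfrac{(M+m)^2}{4Mm}\right)^{1-t}\left\langle A^{-1}x,x\right\rangle^{t-1}$. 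Combining, the accumulated factor is $\left(\tfrac{(M+m)^2}{4Mm}\right)^{t+(1-t)}=\tfrac{(M+m)^2}{4Mm}\le\left(\tfrac{(M+m)^2}{4Mm}\right)^{1+t}$, so the stated bound follows (in fact with exponent $1$, which suffices since $\tfrac{(M+m)^2}{4Mm}\ge 1$). Once you replace the sandwich by the square-root one and write out this arithmetic, your proposal becomes the paper's proof.
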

\begin{proof}
We have
\[\begin{aligned}
   \sqrt{\left\langle {{A}^{-1}}x,x \right\rangle \left\langle Xx,x \right\rangle }&\ge \left\langle {{A}^{-1}}\sharp Xx,x \right\rangle  \quad \text{(by Lemma \ref{11})}\\ 
 & =\left\langle {{\left( {{A}^{-1}}\sharp B \right)}^{t}}x,x \right\rangle  \\ 
 & \ge K\left( \sqrt{\frac{m}{M}},\sqrt{\frac{M}{m}},t \right){{\left\langle {{A}^{-1}}\sharp Bx,x \right\rangle }^{t}} \quad \text{(by Lemma \ref{12})}\\ 
 & \ge K\left( \sqrt{\frac{m}{M}},\sqrt{\frac{M}{m}},t \right){{\left( \frac{2\sqrt{Mm}}{M+m} \right)}^{t}}{{\left\langle {{A}^{-1}}x,x \right\rangle }^{\frac{t}{2}}}{{\left\langle Bx,x \right\rangle }^{\frac{t}{2}}}\quad \text{(by Lemma \ref{10})}.  
\end{aligned}\]
Thus,
\[\left\langle A{{\natural}_{t}}Bx,x \right\rangle \ge {{K}^{2}}\left( \sqrt{\frac{m}{M}},\sqrt{\frac{M}{m}},t \right){{\left( \frac{4Mm}{{{\left( M+m \right)}^{2}}} \right)}^{t}}{{\left\langle {{A}^{-1}}x,x \right\rangle }^{t-1}}{{\left\langle Bx,x \right\rangle }^{t}}.\]
Now, by Lemma \ref{2}, we have
\[{{\left\langle Ax,x \right\rangle }^{1-t}}{{\left\langle Bx,x \right\rangle }^{t}}\le \frac{1}{{{K}^{2}}\left( \sqrt{\frac{m}{M}},\sqrt{\frac{M}{m}},t \right)}{{ \frac{{{\left( M+m \right)}^{2}}}{4Mm} }}\left\langle A{{\natural }_{t}}Bx,x \right\rangle,\]
as desired.
\end{proof}

Now we are ready to present a reversed version of Corollary \eqref{cor_spec_less_geo}.
\begin{corollary}\label{cor_geo_less_spec}
Let $A,B\in \mathcal B\left( \mathcal H \right)$ be such that $0<m\le A,B\le M$, and let $0\le t\le 1$. Then
\[A{{\sharp}_{t}}B\le \frac{1}{{{K}^{2}}\left( \sqrt{\frac{m}{M}},\sqrt{\frac{M}{m}},t \right)}{{ \frac{{{\left( M+m \right)}^{2}}}{4Mm} }}A{{\natural}_{t}}B.\]
\end{corollary}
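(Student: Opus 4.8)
The plan is to run the argument of Corollary~\ref{cor_spec_less_geo} in reverse, replacing Theorem~\ref{4} by its reverse proved immediately above, and replacing Lemma~\ref{5} by Lemma~\ref{11}. All the analytic work has already been done in the preceding theorem, so what remains is a short chaining of scalar inequalities followed by the standard passage back to operators.

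Concretely, I would first fix an arbitrary unit vector $x\in\mathcal H$. By Lemma~\ref{11} (inequality \eqref{eq_inner_geo}),
\[\left\langle A\sharp_t B\,x,x\right\rangle \le \left\langle Ax,x\right\rangle^{1-t}\left\langle Bx,x\right\rangle^{t}.\]
Next I would invoke the reverse of Theorem~\ref{4} established just above, which under the hypothesis $0<m\le A,B\le M$ gives
\[\left\langle Ax,x\right\rangle^{1-t}\left\langle Bx,x\right\rangle^{t}\le \frac{1}{K^{2}\!\left(\sqrt{\tfrac{m}{M}},\sqrt{\tfrac{M}{m}},t\right)}\left(\frac{(M+m)^{2}}{4Mm}\right)^{1+t}\left\langle A\natural_t B\,x,x\right\rangle.\]
Combining the two displays yields
\[\left\langle A\sharp_t B\,x,x\right\rangle\le \frac{1}{K^{2}\!\left(\sqrt{\tfrac{m}{M}},\sqrt{\tfrac{M}{m}},t\right)}\left(\frac{(M+m)^{2}}{4Mm}\right)^{1+t}\left\langle A\natural_t B\,x,x\right\rangle\]
for every unit vector $x$.

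Finally, since $A\sharp_t B$ and $A\natural_t B$ are positive definite and a self-adjoint operator $T$ satisfies $T\le S$ exactly when $\left\langle Tx,x\right\rangle\le\left\langle Sx,x\right\rangle$ for all unit vectors $x$, and since the scalar constant on the right-hand side does not depend on $x$, the last inequality upgrades at once to the claimed operator inequality. I do not expect any genuine obstacle: the only point worth a remark is precisely this last upgrade from the quadratic-form estimate to the operator order, which is immediate here because the constant factors out uniformly.
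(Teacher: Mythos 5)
Your proposal is correct and is exactly the argument the paper intends: the corollary is stated as an immediate consequence of the theorem directly above it, combined with inequality \eqref{eq_inner_geo} of Lemma \ref{11}, just as you chain them, and the passage from the quadratic-form inequality (with an $x$-independent constant) to the operator order is indeed the definition of the operator order for self-adjoint operators. Nothing further is needed.
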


A detailed discussion of the relation between Corollaries \ref{cor_spec_less_geo} and \ref{cor_geo_less_spec} is given in Proposition \ref{last_prop}. The proposition is stated later in this paper due to the detailed computations of the generalized Kantorovich constant, which we prefer to state independently to avoid disturbing the readers.

\subsection{Ando-type inequalities for $A\natural_t B$}

We begin this section by presenting the following Ando-type inequality, similar to the celebrated result given in Lemma \ref{11}.

\begin{theorem}\label{thm_and_1}
Let $A,B\in \mathcal B\left( \mathcal H \right)$ be positive definite such that $0<m\leq A,B\leq M$ and let $\Phi$ be a positive unital linear mapping. Then
$$\Phi(A\natural_t B)\leq \beta \Phi(A)\natural_t \Phi(B), 0\leq t\leq 1,$$
where $\beta =\dfrac{{{\left( M+m \right)}^{2t}}{{\left( {{M}^{1+t}}+{{m}^{1+t}} \right)}^{4}}}{{{4}^{2+t}}{{M}^{2+3t}}{{m}^{2+3t}}}$.
\end{theorem}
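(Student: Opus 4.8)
The plan is to bound $\Phi(A\natural_t B)$ from above by mimicking the strategy of Theorem \ref{4}, but now keeping track of the positive linear map $\Phi$ instead of just a vector state. Recall that $X := A\natural_t B$ is characterized by $\bigl(A^{-1}\sharp B\bigr)^t = A^{-1}\sharp X$, and similarly $\Phi(A)\natural_t\Phi(B)$ is the solution $Y$ of $\bigl(\Phi(A)^{-1}\sharp\Phi(B)\bigr)^t = \Phi(A)^{-1}\sharp Y$. First I would use the bounds $m\le A,B\le M$ to localize the relevant operators: $\tfrac1M\le A^{-1}\le\tfrac1m$, while $A^{-1}\sharp B$ lies between $\tfrac{m}{M}$ and $\tfrac{M}{m}$ (so its $t$-th power lies between $(m/M)^t$ and $(M/m)^t$), and $X$ itself lies between $m^{1+t}/M^t$ and $M^{1+t}/m^t$.

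The heart of the argument is a chain of inequalities on $\Phi\bigl(A^{-1}\sharp X\bigr)$. Starting from the characterizing equation,
\[
\Phi\bigl(\Phi(A)^{-1}\bigr)^{?}\ \longrightarrow\ \text{relate }\Phi(A^{-1}\sharp X)\text{ to }\Phi(A)^{-1}\sharp\Phi(X),
\]
one applies Lemma \ref{10} (the Fiedler–type reverse Ando inequality for $\sharp$, in the form $\Phi(A)\sharp\Phi(B)\le c\,\Phi(A\sharp B)$) with the appropriate constant coming from the bounds on $A^{-1}$ and $X$, to pass from $\Phi\bigl(A^{-1}\sharp X\bigr) = \Phi\bigl((A^{-1}\sharp B)^t\bigr)$ to a multiple of $\Phi(A)^{-1}\sharp\Phi(X)$. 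On the other side, one uses the operator monotonicity/concavity of $s\mapsto s^t$ together with the Choi–Davis inequality (Lemma \ref{lemm_choi}) to compare $\Phi\bigl((A^{-1}\sharp B)^t\bigr)$ with $\bigl(\Phi(A^{-1}\sharp B)\bigr)^t$, and then Lemma \ref{5} (or Lemma \ref{11}) to compare $\Phi(A^{-1}\sharp B)$ with $\Phi(A)^{-1}\sharp\Phi(B)$ — each step costing a Kantorovich-type or ratio-type factor determined by the localization in the first paragraph. Stringing these together yields
\[
\Phi(A)^{-1}\sharp\Phi(X)\ \le\ (\text{const})\ \bigl(\Phi(A)^{-1}\sharp\Phi(B)\bigr)^t\ =\ (\text{const})\ \Phi(A)^{-1}\sharp Y,
\]
where $Y=\Phi(A)\natural_t\Phi(B)$, using \eqref{needed_eq_11} to move the $t$-th power through the $\sharp$.

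Finally I would strip off the common factor $\Phi(A)^{-1}\sharp(\cdot)$: since $\sharp$ is monotone in its second argument and, more to the point, $C\sharp Z\le \lambda\,(C\sharp W)=C\sharp(\lambda W)$ forces $Z\le\lambda W$ when $\lambda\ge 1$ (because $Z\mapsto C\sharp Z$ is a congruence-type bijection, $C\sharp Z = C^{1/2}(C^{-1/2}ZC^{-1/2})^{1/2}C^{1/2}$ with $(\cdot)^{1/2}$ operator monotone), we conclude $\Phi(X)\le(\text{const})\,Y$, i.e. $\Phi(A\natural_t B)\le\beta\,\Phi(A)\natural_t\Phi(B)$. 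The remaining task is bookkeeping: multiply out the constants from Lemmas \ref{10}, \ref{5}/\ref{11}, \ref{12}, etc., evaluated at the localization parameters $m^{1+t}/M^t$, $M^{1+t}/m^t$, $1/M$, $1/m$, $(m/M)^t$, $(M/m)^t$, and check that they collapse to $\beta = \dfrac{(M+m)^{2t}(M^{1+t}+m^{1+t})^4}{4^{2+t}M^{2+3t}m^{2+3t}}$. I expect the main obstacle to be precisely this constant-chasing: making sure each reverse inequality is applied with the correct spectral bounds so that the accumulated factors simplify to the stated $\beta$ rather than something larger, and confirming the cancellation step for $\sharp$ is legitimate (which it is, by operator monotonicity of the square root, but it must be invoked cleanly since $A\le B\not\Rightarrow A^2\le B^2$ in general).
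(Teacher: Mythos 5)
Your setup and the two-sided chain on $\Phi\left(A^{-1}\sharp X\right)$ are exactly the paper's argument: the characterizing equation \eqref{eq_for_spec}, Choi--Davis (Lemma \ref{lemm_choi}), Ando's inequality via \eqref{needed_eq_11}, Lemma \ref{2} on one side, and the reverse Ando inequality (Lemma \ref{10}, with the spectral bounds $\frac{1}{M}\le A^{-1}\le\frac{1}{m}$ and $\frac{m^{1+t}}{M^t}\le X\le\frac{M^{1+t}}{m^t}$) plus Choi--Davis on the other, giving
$C_2\,\Phi(A)^{-1}\sharp\Phi(X)\le C_1^{t/2}\,\Phi(A)^{-1}\sharp Y$ with $C_1=\frac{(M+m)^2}{4Mm}$, $C_2=\frac{2\sqrt{M^{1+t}m^{1+t}}}{M^{1+t}+m^{1+t}}$.

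The gap is the last step, where you ``strip off'' $\Phi(A)^{-1}\sharp(\cdot)$. First, a small slip: $C\sharp(\lambda W)=\sqrt{\lambda}\,(C\sharp W)$, so $\lambda(C\sharp W)=C\sharp(\lambda^2 W)$, not $C\sharp(\lambda W)$. More seriously, the cancellation itself is not legitimate: writing $C\sharp Z=C^{1/2}\left(C^{-1/2}ZC^{-1/2}\right)^{1/2}C^{1/2}$, the inequality $C\sharp Z\le C\sharp W'$ only says $\left(C^{-1/2}ZC^{-1/2}\right)^{1/2}\le\left(C^{-1/2}W'C^{-1/2}\right)^{1/2}$, and to deduce $Z\le W'$ you must \emph{square} an operator inequality, which fails in general; operator monotonicity of the square root goes the opposite way (it lets you take roots, not remove them), so the justification you offer is backwards, as you yourself half-notice at the end. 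This is precisely why the paper, after conjugating by $\Phi(A)^{1/2}$ to get $C_2\bigl(\Phi(A)^{1/2}\Phi(X)\Phi(A)^{1/2}\bigr)^{1/2}\le C_1^{t/2}\bigl(\Phi(A)^{1/2}Y\Phi(A)^{1/2}\bigr)^{1/2}$, invokes Lemma \ref{3} (the weak squaring lemma), which costs the additional Kantorovich-type factor $K_2=\frac{\left(M^{1+t}+m^{1+t}\right)^2}{4M^{1+t}m^{1+t}}$. That factor is visibly part of the stated constant: $\beta=C_1^{t}K_2/C_2^{2}$. Your bookkeeping, even done correctly, would not ``collapse to $\beta$'' --- the illegitimate cancellation would yield the unjustified smaller constant $C_1^{t}/C_2^{2}$. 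To repair the proof, replace the cancellation by the conjugation-plus-Lemma~\ref{3} step, tracking the spectral bounds needed for that lemma.
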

\begin{proof}
Let $X=A\natural_t B$ and $Y=\Phi(A)\natural_t \Phi(B).$ Then
\begin{align*}
\Phi\left(A^{-1}\sharp X\right)&=\Phi\left(\left(A^{-1}\sharp B\right)^t\right)\;\;({\text{by}}\;\eqref{eq_for_spec})\\
&\leq \Phi\left(A^{-1}\sharp B\right)^t\;\;\;({\text{by\;Lemma}}\;\ref{lemm_choi})\\
&\leq \left(\Phi\left(A^{-1}\right)\sharp \Phi(B)\right)^t\;\;({\text{by}}\;\eqref{needed_eq_11})\\
&\leq \left(C_1 \Phi(A)^{-1}\sharp \Phi(B)\right)^t\;\;({\text{by\;Lemma}}\;\ref{2})\\
&=C_1^{\frac{t}{2}}\left(\Phi(A)^{-1}\sharp \Phi(B)\right)^t\\
&=C_1^{\frac{t}{2}} \Phi(A)^{-1}\sharp Y,
\end{align*}
where ${{C}_{1}}=\dfrac{{{\left( M+m \right)}^{2}}}{4Mm}$.  On the other hand, using the reversed Ando's inequality, we obtain
\begin{align*}
\Phi\left(A^{-1}\sharp X\right)&\geq C_2 \Phi(A^{-1})\sharp \Phi(X)\;\;({\text{by\;Lemma}}\;\ref{10})\\
&\geq C_2 \Phi(A)^{-1}\sharp \Phi(X)\;\;({\text{by\;Lemma}}\;\ref{lemm_choi})
\end{align*}
where ${{C}_{2}}=\dfrac{2\sqrt{{{M}^{1+t}}{{m}^{1+t}}}}{{{M}^{1+t}}+{{m}^{1+t}}}$. Thus, we have shown that
\begin{align*}
C_2 \Phi(A)^{-1}\sharp \Phi(X)\leq C_1^{\frac{t}{2}} \Phi(A)^{-1}\sharp Y.
\end{align*}
This leads to
\begin{align*}
C_2\left( \Phi(A)^{\frac{1}{2}}\Phi(X)\Phi(A)^{\frac{1}{2}}         \right)^{\frac{1}{2}}\leq C_1^{\frac{t}{2}}\left(   \Phi(A)^{\frac{1}{2}} Y\Phi(A)^{\frac{1}{2}}                 \right)^{\frac{1}{2}}.
\end{align*}
Now, by Lemma \ref{3}, we infer that
\begin{align*}
C_2^2\left( \Phi(A)^{\frac{1}{2}}\Phi(X)\Phi(A)^{\frac{1}{2}}         \right)\leq C_1^tK_2\left(   \Phi(A)^{\frac{1}{2}} Y\Phi(A)^{\frac{1}{2}}                 \right),
\end{align*}
where ${{K}_{2}}=\dfrac{{{\left( {{M}^{1+t}}+{{m}^{1+t}} \right)}^{2}}}{4{{M}^{1+t}}{{m}^{1+t}}}$, which in turns implies
\begin{align*}
\Phi(A\natural_t B)\leq \frac{C_1^tK_2}{C_2^2}\Phi(A)\natural_t \Phi(B).
\end{align*}
This completes the proof.
\end{proof}

Having shown Theorem \ref{thm_and_1} as an Ando-type inequality for the spectral geometric mean, the following theorem intends to give a reversed version of Theorem \ref{thm_and_1}.

\begin{theorem}\label{thm_rev_ando}
Let $A,B\in \mathcal B\left( \mathcal H \right)$ be positive definite such that $0<m\leq A,B\leq M$ and let $\Phi$ be a positive unital linear mapping. Then
\[\Phi \left( A \right){{\natural}_{t}}\Phi \left( B \right)\le \frac{1}{{{K}^{2}}\left( \frac{m}{M},\frac{M}{m},t \right)}{{\left( \frac{M+m}{2\sqrt{Mm}} \right)}^{2t}}\frac{{{\left( {{M}^{1+t}}+{{m}^{1+t}} \right)}^{2}}}{4{{M}^{1+t}}{{m}^{1+t}}}\Phi \left( A{{\natural }_{t}}B \right).\]
\end{theorem}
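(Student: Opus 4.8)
The plan is to run the scheme of the proof of Theorem~\ref{thm_and_1} ``in reverse'': wherever that argument used a direct inequality (Choi--Davis, \eqref{needed_eq_11}, Lemma~\ref{2}) one now uses the matching Kantorovich reverse, and wherever it used a reverse inequality one uses the direct one, so that the quantity $\Phi(A^{-1}\sharp X)$ gets squeezed in the opposite order. Write $X=A\natural_t B$ and $Y=\Phi(A)\natural_t\Phi(B)$. As in the proof of Theorem~\ref{4}, both satisfy $\tfrac{m^{1+t}}{M^{t}}\le X,Y\le\tfrac{M^{1+t}}{m^{t}}$ and $\tfrac1M\le\Phi(A)^{-1}\le\tfrac1m$, while \eqref{eq_for_spec} gives $A^{-1}\sharp X=(A^{-1}\sharp B)^{t}$ and $\Phi(A)^{-1}\sharp Y=(\Phi(A)^{-1}\sharp\Phi(B))^{t}$.

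First I would produce an upper bound $\Phi(A^{-1}\sharp X)\le c_{1}\,\Phi(A)^{-1}\sharp\Phi(X)$: Ando's inequality (Lemma~\ref{11}) gives $\Phi(A^{-1}\sharp X)\le\Phi(A^{-1})\sharp\Phi(X)$, then Lemma~\ref{2} ($\Phi(A^{-1})\le\tfrac{(M+m)^{2}}{4Mm}\Phi(A)^{-1}$) together with monotonicity and positive homogeneity of $\sharp$ yields $c_{1}=\big(\tfrac{(M+m)^{2}}{4Mm}\big)^{1/2}$. Next I would produce a lower bound $\Phi(A^{-1}\sharp X)\ge c_{2}\,\Phi(A)^{-1}\sharp Y$: starting from $\Phi\big((A^{-1}\sharp B)^{t}\big)$, apply Lemma~\ref{12} (with $\sqrt{m/M}\le A^{-1}\sharp B\le\sqrt{M/m}$) to commute $\Phi$ past the $t$-th power, then the reverse Ando inequality (Lemma~\ref{10}) to commute $\Phi$ inside the geometric mean, and finally Choi--Davis ($\Phi(A)^{-1}\le\Phi(A^{-1})$) with operator monotonicity of $x\mapsto x^{t}$. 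Combining the two bounds gives $c_{2}\,\Phi(A)^{-1}\sharp Y\le c_{1}\,\Phi(A)^{-1}\sharp\Phi(X)$.

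Now comes the move borrowed from the proof of Theorem~\ref{thm_and_1}: conjugating by $\Phi(A)^{1/2}$ turns the last line into $c_{2}\,\big(\Phi(A)^{1/2}Y\Phi(A)^{1/2}\big)^{1/2}\le c_{1}\,\big(\Phi(A)^{1/2}\Phi(X)\Phi(A)^{1/2}\big)^{1/2}$, and since squaring is not operator monotone one cannot simply drop the square roots; instead one applies Lemma~\ref{3} to this inequality, which squares it at the expense of the Kantorovich factor $\tfrac{(M^{1+t}+m^{1+t})^{2}}{4M^{1+t}m^{1+t}}$ arising from the bounds on $\Phi(A)^{1/2}Y\Phi(A)^{1/2}$. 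Conjugating back yields $Y\le(\text{const})\,\Phi(X)$, i.e.\ $\Phi(A)\natural_t\Phi(B)\le(\text{const})\,\Phi(A\natural_t B)$. An equivalent route, which makes the constant $K(m/M,M/m,t)$ with the parameter $t$ (rather than $\tfrac12$) in its last slot transparent, is to apply Corollary~\ref{cor_spec_less_geo} to $\Phi(A),\Phi(B)$ and then Lemma~\ref{5} (with $\tfrac mM A\le B\le\tfrac Mm A$), obtaining $Y\le K(m/M,M/m,t)^{-2}\,\tfrac{(M^{1+t}+m^{1+t})^{2}}{4M^{1+t}m^{1+t}}\,\Phi(A\sharp_t B)$, and then to compare $\Phi(A\sharp_t B)$ with $\Phi(A\natural_t B)$, the comparison contributing the factor $\big(\tfrac{M+m}{2\sqrt{Mm}}\big)^{2t}$.

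No single step is deep---each is an application of one of Lemmas~\ref{11},~\ref{5},~\ref{10},~\ref{2},~\ref{12},~\ref{3} or of the Corollaries already established---so the main obstacle is purely the bookkeeping: one must pick, at each of the three ``commute $\Phi$ past an operation'' steps (past $x^{-1}$, past $\sharp$, past $x^{t}$), the estimate that points the right way for the desired sandwich, and arrange the geometric means and the two-sided bounds so that all the Kantorovich-type factors collapse to exactly $\dfrac{1}{K^{2}(m/M,M/m,t)}\Big(\dfrac{M+m}{2\sqrt{Mm}}\Big)^{2t}\dfrac{(M^{1+t}+m^{1+t})^{2}}{4M^{1+t}m^{1+t}}$; that collapse of constants is the delicate part.
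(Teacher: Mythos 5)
Your overall architecture is the paper's: sandwich $\Phi(A^{-1}\sharp X)$ (where $X=A\natural_t B$) between a multiple of $\Phi(A)^{-1}\sharp Y$ and a multiple of $\Phi(A)^{-1}\sharp\Phi(X)$, conjugate by $\Phi(A)^{1/2}$, and square via Lemma \ref{3}; your lower bound (Lemma \ref{12} with $\sqrt{m/M}\le A^{-1}\sharp B\le\sqrt{M/m}$, a reverse Ando step contributing $\bigl(\tfrac{2\sqrt{Mm}}{M+m}\bigr)^{t}$, then Choi--Davis) is essentially the paper's chain. The gap is in the upper half of the sandwich. Your first route pays $c_1=\bigl(\tfrac{(M+m)^2}{4Mm}\bigr)^{1/2}$ (Ando plus Lemma \ref{2}) to replace $\Phi(A^{-1})$ by $\Phi(A)^{-1}$, so after the Lemma \ref{3} squaring you obtain the constant $\frac{1}{K^2\left(\sqrt{m/M},\sqrt{M/m},t\right)}\bigl(\tfrac{M+m}{2\sqrt{Mm}}\bigr)^{2(1+t)}\tfrac{(M^{1+t}+m^{1+t})^2}{4M^{1+t}m^{1+t}}$, which is \emph{not} the constant of the theorem: it carries an extra factor $\tfrac{(M+m)^2}{4Mm}$ and a different Kantorovich factor, and for large $M/m$ it is strictly larger than the stated one (since $K(x,t)\sim c_t x^{-t(1-t)}$), so what you prove does not imply the statement. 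Your second route produces the right-looking constant only because you assert, without proof, the comparison $\Phi(A\sharp_tB)\le\bigl(\tfrac{M+m}{2\sqrt{Mm}}\bigr)^{2t}\Phi(A\natural_tB)$; nothing in the paper supplies such a bound --- the only available $\sharp_t$ versus $\natural_t$ comparison in this direction, Corollary \ref{cor_geo_less_spec}, has the much larger constant $\frac{1}{K^2\left(\sqrt{m/M},\sqrt{M/m},t\right)}\bigl(\tfrac{(M+m)^2}{4Mm}\bigr)^{1+t}$ --- so that route is not a proof either. Hence, as submitted, the proposal does not establish the theorem with the claimed constant.

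For context, your hesitation at the upper bound is well founded, and it is exactly the point where the paper's own proof is thinnest: the paper bounds $\Phi(A^{-1}\sharp X)$ above by $\Phi(A^{-1})\sharp\Phi(X)$ (Ando) and then conjugates by $\Phi(A)^{1/2}$ as though the right-hand side were $\Phi(A)^{-1}\sharp\Phi(X)$; the inequality $\Phi(A^{-1}\sharp X)\le\Phi(A)^{-1}\sharp\Phi(X)$ that this implicitly uses is false in general (try $X=I$, $A=\mathrm{diag}(a,1/a)$, $\Phi=\tfrac12\mathrm{tr}$), which is precisely why your honest patch costs the extra factor $\tfrac{(M+m)^2}{4Mm}$. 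So your first route is a correct proof of a weaker (differently-constanted) inequality, but to defend the statement as printed you would need either to justify the paper's identification or to prove the sharp $\sharp_t$--$\natural_t$ comparison you invoked in the second route; neither is done in the proposal.
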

\begin{proof}
Let $X=A\natural_t B$. Then
\[\begin{aligned}
   \Phi \left( {{A}^{-1}}\sharp X \right)&=\Phi \left( {{\left( {{A}^{-1}}\sharp B \right)}^{t}} \right)\quad({\text{by}}\;\eqref{eq_for_spec}) \\ 
 & \ge K\left( \frac{m}{M},\frac{M}{m},t \right)\Phi {{\left( {{A}^{-1}}\sharp B \right)}^{t}}\quad({\text{by\;Lemma}}\;\ref{12}) \\ 
 & \ge K\left( \frac{m}{M},\frac{M}{m},t \right){{\left( \frac{2\sqrt{Mm}}{M+m}\Phi \left( {{A}^{-1}} \right)\sharp \Phi \left( B \right) \right)}^{t}}\quad({\text{by\;Lemma}}\;\ref{5}) \\ 
 & =K\left( \frac{m}{M},\frac{M}{m},t \right){{\left( \frac{2\sqrt{Mm}}{M+m} \right)}^{t}}{{\left( \Phi \left( {{A}^{-1}} \right)\sharp \Phi \left( B \right) \right)}^{t}} \\ 
 & \ge K\left( \frac{m}{M},\frac{M}{m},t \right){{\left( \frac{2\sqrt{Mm}}{M+m} \right)}^{t}}{{\left( \Phi {{\left( A \right)}^{-1}}\sharp \Phi \left( B \right) \right)}^{t}}\;({\text{by\;Lemma}}\;\ref{lemm_choi}) \\ 
 & =K\left( \frac{m}{M},\frac{M}{m},t \right){{\left( \frac{2\sqrt{Mm}}{M+m} \right)}^{t}}\Phi {{\left( A \right)}^{-1}}\sharp Y,
\end{aligned}\]
where $Y=\Phi(A)\natural_t \Phi(B)$; see \eqref{eq_for_spec}.  This implies
\[\Phi {{\left( A \right)}^{-1}}\sharp Y\le \frac{1}{K\left( \frac{m}{M},\frac{M}{m},t \right)}{{\left( \frac{M+m}{2\sqrt{Mm}} \right)}^{t}}\Phi \left( {{A}^{-1}} \right)\sharp \Phi \left( X \right).\]
Therefore,
\[{{\left( \Phi {{\left( A \right)}^{\frac{1}{2}}}Y\Phi {{\left( A \right)}^{\frac{1}{2}}} \right)}^{\frac{1}{2}}}\le \frac{1}{K\left( \frac{m}{M},\frac{M}{m},t \right)}{{\left( \frac{M+m}{2\sqrt{Mm}} \right)}^{t}}{{\left( \Phi {{\left( A \right)}^{\frac{1}{2}}}\Phi \left( X \right)\Phi {{\left( A \right)}^{\frac{1}{2}}} \right)}^{\frac{1}{2}}}.\]
By Lemma \ref{3},
\[\Phi \left( A \right){{\natural}_{t}}\Phi \left( B \right)\le \frac{1}{{{K}^{2}}\left( \frac{m}{M},\frac{M}{m},t \right)}{{\left( \frac{M+m}{2\sqrt{Mm}} \right)}^{2t}}\frac{{{\left( {{M}^{1+t}}+{{m}^{1+t}} \right)}^{2}}}{4{{M}^{1+t}}{{m}^{1+t}}}\Phi \left( A{{\natural }_{t}}B \right),\]
as desired.
\end{proof}
An upper bound of $\Phi(A)\sharp_t \Phi(A)^{-1}$ in terms of the Kantorovich constant is usually stated as the Kantorovich inequality. In the following, we present this inequality for the spectral geometric mean.
\begin{corollary}\label{cor_kanto}
(Operator Kantorovich inequality for spectral geometric mean) Let $A\in \mathcal B\left( \mathcal H \right)$ be positive definite such that $0<m\leq A\leq M$ and let $\Phi$ be a positive unital linear mapping. If $0\leq t\leq 1,$ then
\[\Phi \left( A \right)\natural_t \Phi \left( {{A}^{-1}} \right)\le \frac{1}{{{K}^{2}}\left( \frac{1}{M},\frac{1}{m},t \right)}\left( \frac{M+m}{2\sqrt{Mm}} \right)\frac{{{\left( {{{M}^{1+t}}}+{{{m}^{1+t}}} \right)}^{2}}}{4{{{M}^{1+t}}{{m}^{1+t}}}}.\]
\end{corollary}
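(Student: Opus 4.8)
The plan is to mimic the proof of Theorem \ref{thm_rev_ando} with the choice $B=A^{-1}$, exploiting two simplifications special to this case. Since the geometric mean is idempotent, $A^{-1}\sharp A^{-1}=A^{-1}$, so the operator governing the chain in that proof, namely $A^{-1}\sharp B$, becomes simply $A^{-1}$, whose spectrum lies in $\left[1/M,1/m\right]$; this is exactly why $K\left(1/M,1/m,t\right)$, rather than $K\left(m/M,M/m,t\right)$, appears in the statement. Moreover, from \eqref{eq_for_spec}, $A^{-1}\sharp\left(A\natural_t A^{-1}\right)=\left(A^{-1}\sharp A^{-1}\right)^t=A^{-t}$, which forces $A\natural_t A^{-1}=A^{1-2t}$. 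I would record both identities at the outset.

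Set $X=A\natural_t A^{-1}$ and $Y=\Phi(A)\natural_t\Phi(A^{-1})$. Starting from $\Phi\left(A^{-1}\sharp X\right)=\Phi\left(\left(A^{-1}\right)^t\right)$, Lemma \ref{12} applied with the bounds $1/M\le A^{-1}\le 1/m$ gives $\Phi\left(A^{-1}\sharp X\right)\ge K\left(1/M,1/m,t\right)\Phi\left(A^{-1}\right)^t$; then the Choi--Davis inequality $\Phi(A)^{-1}\le\Phi\left(A^{-1}\right)$ (Lemma \ref{lemm_choi}), operator monotonicity of $s\mapsto s^t$, and the defining relation $\left(\Phi(A)^{-1}\sharp\Phi(A^{-1})\right)^t=\Phi(A)^{-1}\sharp Y$ together yield
\[
\Phi(A)^{-1}\sharp Y\le\frac{1}{K\left(1/M,1/m,t\right)}\,\Phi\left(A^{-1}\sharp X\right).
\]
Ando's inequality (Lemma \ref{11}) gives $\Phi\left(A^{-1}\sharp X\right)\le\Phi\left(A^{-1}\right)\sharp\Phi(X)$, and Lemma \ref{2} together with the homogeneity $(\lambda C)\sharp D=\sqrt{\lambda}\,(C\sharp D)$ turns the right-hand side into $\dfrac{M+m}{2\sqrt{Mm}}\,\Phi(A)^{-1}\sharp\Phi(X)$.

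At this stage both sides have the form $\Phi(A)^{-1}\sharp(\cdot)$; conjugating by $\Phi(A)^{1/2}$ reduces matters to comparing $\left(\Phi(A)^{1/2}Y\Phi(A)^{1/2}\right)^{1/2}$ with a scalar multiple of $\left(\Phi(A)^{1/2}\Phi(X)\Phi(A)^{1/2}\right)^{1/2}$, and Lemma \ref{3} removes the square roots at the cost of the Kantorovich factor determined by the spectral bounds of $\left(\Phi(A)^{1/2}Y\Phi(A)^{1/2}\right)^{1/2}$: that operator has extreme ratio $(M/m)^{1+t}$, which produces $\dfrac{\left(M^{1+t}+m^{1+t}\right)^2}{4M^{1+t}m^{1+t}}$. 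This leaves $Y$ dominated by a constant times $\Phi(X)=\Phi\left(A^{1-2t}\right)$, which, since $m\le A\le M$, is in turn dominated by a scalar multiple of $I$; collecting the constants completes the argument. I expect the only real difficulty to be bookkeeping — one must track the spectral bounds of every intermediate operator, especially of $\Phi(A)\natural_t\Phi(A^{-1})$ and of the congruence $\Phi(A)^{1/2}(\Phi(A)\natural_t\Phi(A^{-1}))\Phi(A)^{1/2}$, so that the constants coming from Lemmas \ref{2}, \ref{3}, and \ref{12} combine into the one appearing in the statement; no individual step requires a new idea.
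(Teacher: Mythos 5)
Your reconstruction of the argument (specialize the proof of Theorem \ref{thm_rev_ando} to $B=A^{-1}$, using $A^{-1}\sharp A^{-1}=A^{-1}$ and $A\natural_t A^{-1}=A^{1-2t}$) is the natural one, and every individual step you list is valid; note the paper itself offers no proof of this corollary. The genuine gap is the last sentence, ``collecting the constants completes the argument'': they do not collect to the stated constant. Carried out honestly, your chain gives
\[
\Phi(A)\natural_t\Phi(A^{-1})\le \frac{1}{K^{2}\left(\frac{1}{M},\frac{1}{m},t\right)}\left(\frac{M+m}{2\sqrt{Mm}}\right)^{2}\frac{\left(M^{1+t}+m^{1+t}\right)^{2}}{4M^{1+t}m^{1+t}}\,\Phi\left(A^{1-2t}\right),
\]
because the factor $\frac{M+m}{2\sqrt{Mm}}$ produced by Lemma \ref{2} enters \emph{before} the squaring step of Lemma \ref{3} and therefore appears squared, and the operator $\Phi(X)=\Phi(A^{1-2t})$ survives; bounding it by $\max\{M^{1-2t},m^{1-2t}\}$ leaves you with a scalar bound that differs from the statement both in the power of $\frac{M+m}{2\sqrt{Mm}}$ and by the extra factor $\max\{M^{1-2t},m^{1-2t}\}$, which is not $\le 1$ in general. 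Even at $t=1/2$, where $\Phi(A^{1-2t})=I$, your route still carries the squared factor, so the stated constant is not reached.

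Moreover, this is not a repairable bookkeeping lapse: the corollary as printed cannot be derived by any such argument for $t\neq 1/2$, because the two sides scale differently. Replacing $A$ by $\lambda A$ (and $m,M$ by $\lambda m,\lambda M$) multiplies the left-hand side by $\lambda^{1-2t}$, while the right-hand side is unchanged, since $K$ and the other two factors depend only on the ratio $M/m$. Concretely, take $\Phi=\mathrm{id}$, $A=4I$, $m=1$, $M=4$, $t=0.1$: then $\Phi(A)\natural_t\Phi(A^{-1})=4^{0.8}I\approx 3.0\,I$, whereas the stated right-hand side is approximately $2.2$. So what your argument actually proves is a corrected version of the corollary containing the factor $\Phi(A^{1-2t})$ (or $\max\{M^{1-2t},m^{1-2t}\}$); the statement itself, lacking that factor, is false for small $t$, and your final ``collecting constants'' step is precisely where the proposal breaks down.
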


\begin{corollary}\label{8}
Let $A,B\in \mathcal B\left( \mathcal H \right)$ be such that $0<m\le A,B\le M$, and let $0\le t\le 1$. Then
\[A{{\nabla }_{t}}B\le \frac{m{{\nabla }_{\lambda }}M}{m{{\sharp }_{\lambda }}M}\frac{1}{{{K}^{2}}\left( \sqrt{\frac{m}{M}},\sqrt{\frac{M}{m}},t \right)}{{\left( \frac{{{\left( M+m \right)}^{2}}}{4Mm} \right)}^{1+t}}A{{\natural }_{t}}B,\]
where $\lambda =\min \left\{ t,1-t \right\}$.
\end{corollary}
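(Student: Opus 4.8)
The plan is to derive the stated inequality by composing a reverse weighted arithmetic–geometric mean inequality with Corollary~\ref{cor_geo_less_spec}. Since that corollary already gives
\[
A\sharp_t B\le \frac{1}{K^{2}\!\left(\sqrt{\tfrac{m}{M}},\sqrt{\tfrac{M}{m}},t\right)}\left(\frac{(M+m)^{2}}{4Mm}\right)^{1+t}A\natural_t B ,
\]
it suffices to prove the operator inequality
\[
A\nabla_t B\le \frac{m\nabla_\lambda M}{m\sharp_\lambda M}\,A\sharp_t B,\qquad \lambda=\min\{t,1-t\},
\]
for $0<m\le A,B\le M$, and then substitute one bound into the other; the resulting constant is exactly the one in Corollary~\ref{8}.

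I would obtain this reverse mean inequality from its scalar form: for $a,b\in[m,M]$ and $0\le t\le1$,
\[
(1-t)a+tb\le \frac{m\nabla_\lambda M}{m\sharp_\lambda M}\,a^{1-t}b^{t}.
\]
Dividing by $a$ and writing $h=b/a\in[m/M,M/m]$, this is the claim $g(h)\le \tfrac{m\nabla_\lambda M}{m\sharp_\lambda M}$ for $g(h):=\bigl((1-t)+th\bigr)h^{-t}$. A one-line computation gives $g'(h)=t(1-t)(h-1)h^{-1-t}$, so $g$ decreases on $(0,1]$ and increases on $[1,\infty)$; hence its maximum over $[m/M,M/m]$ is $\max\{g(m/M),g(M/m)\}$. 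Evaluating the endpoints, $g(M/m)=\tfrac{m\nabla_t M}{m\sharp_t M}$ and $g(m/M)=\tfrac{m\nabla_{1-t}M}{m\sharp_{1-t}M}$, so the scalar inequality reduces to showing that the larger of these two is the one carrying the index $\lambda=\min\{t,1-t\}$; equivalently, that $\varphi(\nu):=\log\bigl((1-\nu)m+\nu M\bigr)-(1-\nu)\log m-\nu\log M$ satisfies $\varphi(\nu)\ge\varphi(1-\nu)$ for $0\le\nu\le\tfrac12$.

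To see this, set $h=M/m$, $p=(1-\nu)+\nu h$, $q=\nu+(1-\nu)h$, so that $p+q=1+h$ is fixed while $pq$ increases as $\nu$ runs from $0$ to $\tfrac12$. A short calculation yields $\frac{d}{d\nu}\bigl(\varphi(\nu)-\varphi(1-\nu)\bigr)=\frac{h^{2}-1}{pq}-2\log h$, which is therefore decreasing in $\nu$ on $[0,\tfrac12]$; it is nonnegative at $\nu=0$ because $2\log h\le h-\tfrac1h$ and nonpositive at $\nu=\tfrac12$ because $\frac{2(h-1)}{h+1}\le\log h$ (these two facts are elementary — they are the harmonic--logarithmic--arithmetic mean inequalities for the pair $\{1,h\}$). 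Since $\varphi(\nu)-\varphi(1-\nu)$ vanishes at both $\nu=0$ and $\nu=\tfrac12$ and its derivative changes sign from $+$ to $-$ in between, it is nonnegative on $[0,\tfrac12]$, which gives the scalar inequality with constant $\tfrac{m\nabla_\lambda M}{m\sharp_\lambda M}$. The operator version follows by the usual functional-calculus transfer: with $C=A^{-1/2}BA^{-1/2}\in[m/M,M/m]$ one has $(1-t)I+tC\le \tfrac{m\nabla_\lambda M}{m\sharp_\lambda M}\,C^{t}$, and conjugating by $A^{1/2}$ gives $A\nabla_t B\le \tfrac{m\nabla_\lambda M}{m\sharp_\lambda M}\,A\sharp_t B$. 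Composing with Corollary~\ref{cor_geo_less_spec} finishes the proof.

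The step I expect to be the real obstacle is the comparison $\varphi(\nu)\ge\varphi(1-\nu)$ on $[0,\tfrac12]$: this is precisely where the index $\lambda=\min\{t,1-t\}$ is produced, and it is the only place where genuine (if elementary) analysis is needed; differentiating $g$, the endpoint evaluations, the passage to operators, and the final composition are all mechanical. If preferred, one can bypass this step entirely by quoting the reverse weighted arithmetic–geometric mean inequality $A\nabla_t B\le \tfrac{m\nabla_\lambda M}{m\sharp_\lambda M}A\sharp_t B$ from the literature on complementary Young-type inequalities, and then simply invoke Corollary~\ref{cor_geo_less_spec}.
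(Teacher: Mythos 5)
Your proposal is correct and follows essentially the same route as the paper: the paper also obtains the result by combining the reverse weighted arithmetic--geometric mean inequality $A\nabla_t B\le \frac{m\nabla_\lambda M}{m\sharp_\lambda M}A\sharp_t B$ (inequality \eqref{9}, quoted from \cite{7}) with Corollary \ref{cor_geo_less_spec}. The only difference is that you supply a self-contained scalar proof of \eqref{9} (endpoint analysis of $g(h)=((1-t)+th)h^{-t}$ plus the symmetry comparison producing $\lambda=\min\{t,1-t\}$), and your computations there check out.
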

\begin{proof}
It has been shown in \cite{7} that
\begin{equation}\label{9}
A{{\nabla }_{t}}B\le \frac{m{{\nabla }_{\lambda }}M}{m{{\sharp }_{\lambda }}M}A{{\sharp }_{t}}B.
\end{equation}
Now, the result follows by combining \eqref{9} and Corollary \ref{cor_geo_less_spec}.
\end{proof}

\begin{theorem}\label{AH_SGM_theorem01}
(Ando-Hiai inequality for spectral geometric mean)  Let $A,B\in \mathcal B\left( \mathcal H \right)$ be such that $0<m\le A,B\le M$, and let $0\le r\le 1$. Then
\[{{\left( A{{\natural }_{t}}B \right)}^{r}}\le \min\{\kappa_1(m,M,r,t,\lambda),\kappa_2(m,M,r,t,\lambda)\}{{A}^{r}}{{\natural }_{t}}{{B}^{r}},\]
where 
\begin{eqnarray*}
&&\kappa_1(m,M,r,t,\lambda):=\frac{\frac{m^r\nabla_{\lambda}M^r}{m^r\sharp_{\lambda}M^r}\left(\frac{(M^r+m^r)^2}{4M^rm^r}\right)^{1+t}}{K(m,M,t)K^2\left(\sqrt{\frac{m^r}{M^r}},\sqrt{\frac{M^r}{m^r}},t\right)}\frac{\left(\frac{(M+m)^2}{4Mm}\right)^{r(1+t)}}{K^{2r}\left(\sqrt{m}{M},\sqrt{M}{m},t\right)},\\
&&\kappa_2(m,M,r,t,\lambda):=\frac{\frac{m^r\nabla_{\lambda}M^r}{m^r\sharp_{\lambda}M^r}\left(\frac{(M^r+m^r)^2}{4M^rm^r}\right)^{1+t}}{K(m,M,t)K^2\left(\sqrt{\frac{m^r}{M^r}},\sqrt{\frac{M^r}{m^r}},t\right)}\left(\frac{(M^{1+t}+m^{1+t})^2}{4M^{1+t}m^{1+t}}\right)^r,
\end{eqnarray*}
for  $0\le t\le 1$ and $\lambda =\min \left\{ t,1-t \right\}$.
\end{theorem}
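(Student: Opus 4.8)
The plan is to sandwich $(A\natural_t B)^r$ between scalar multiples of an operator mean of $A^r$ and $B^r$, using only the comparison inequalities already recorded above, and to take the operator $r$-th power exactly once---while the underlying inequality still carries a positive scalar constant---so that operator monotonicity of $x\mapsto x^r$ on $(0,\infty)$ (valid since $0\le r\le 1$) may be invoked.

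First I would produce two upper bounds for $A\natural_t B$ in terms of $A\nabla_t B$. The right-hand inequality of Corollary \ref{1} gives at once $A\natural_t B\le K_2\,A\nabla_t B$, where $K_2:=\frac{(M^{1+t}+m^{1+t})^2}{4M^{1+t}m^{1+t}}$. For the second bound I would apply Corollary \ref{cor_geo_less_spec} to the pair $A^{-1},B^{-1}$, which satisfies $\frac1M\le A^{-1},B^{-1}\le\frac1m$; since the constant of that corollary depends only on the ratio of the two bounds (hence only on $M/m$ and $t$), it is again $c:=\frac{1}{K^2(\sqrt{m/M},\sqrt{M/m},t)}\left(\frac{(M+m)^2}{4Mm}\right)^{1+t}$, so that $A^{-1}\sharp_t B^{-1}\le c\,A^{-1}\natural_t B^{-1}$. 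Taking inverses and using $(A^{-1}\sharp_t B^{-1})^{-1}=A\sharp_t B$ and $(A^{-1}\natural_t B^{-1})^{-1}=A\natural_t B$ gives $A\natural_t B\le c\,A\sharp_t B\le c\,A\nabla_t B$, the last step by \eqref{eq_means_comp}. Hence $A\natural_t B\le\min\{K_2,c\}\,A\nabla_t B$, and applying $x\mapsto x^r$ yields $(A\natural_t B)^r\le\min\{K_2^r,c^r\}\,(A\nabla_t B)^r$.

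Next I would transport $(A\nabla_t B)^r$ across to $A^r\natural_t B^r$ in three moves. With $m\le A,B\le M$, Lemma \ref{7} gives $(A\nabla_t B)^r\le\frac{1}{K(m,M,t)}A^r\nabla_t B^r$. Since $m^r\le A^r,B^r\le M^r$, the inequality \eqref{9} applied to $A^r,B^r$ gives $A^r\nabla_t B^r\le\frac{m^r\nabla_\lambda M^r}{m^r\sharp_\lambda M^r}A^r\sharp_t B^r$ with $\lambda=\min\{t,1-t\}$, and Corollary \ref{cor_geo_less_spec} applied to $A^r,B^r$ gives $A^r\sharp_t B^r\le\frac{1}{K^2(\sqrt{m^r/M^r},\sqrt{M^r/m^r},t)}\left(\frac{(M^r+m^r)^2}{4M^rm^r}\right)^{1+t}A^r\natural_t B^r$. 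Multiplying the four scalar constants and simplifying, the product of the last three is precisely the factor common to $\kappa_1$ and $\kappa_2$, while the first factor is $c^r$---which, expanded, is exactly the remaining factor of $\kappa_1$---or $K_2^r$, the remaining factor of $\kappa_2$; taking the smaller of the two resulting constants finishes the proof. I should note that the symbol written $K^{2r}(\sqrt{m}{M},\sqrt{M}{m},t)$ in $\kappa_1$ is to be read as $K^{2r}(\sqrt{m/M},\sqrt{M/m},t)$, i.e.\ the $r$-th power of the constant of Corollary \ref{cor_geo_less_spec} for the pair $A,B$.

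The computation is mostly bookkeeping: keeping track of which interval bounds ($m,M$, or $m^r,M^r$, or $1/M,1/m$) feed each invoked result, and to which pair of operators each is applied. The one genuinely non-mechanical point, and the step I expect to dwell on, is the second bound in the second paragraph---that Corollary \ref{cor_geo_less_spec} may be dualized through $A\mapsto A^{-1}$, $B\mapsto B^{-1}$ without changing its constant, giving the new estimate $A\natural_t B\le c\,A\sharp_t B$ alongside $A\natural_t B\le K_2\,A\nabla_t B$. This is what makes the $\min$ in the statement non-redundant, since $c$ and $K_2$ are incomparable in general (compare Proposition \ref{last_prop}); one should also check that the alternative route via Corollary \ref{cor_spec_less_geo} only yields $A\natural_t B\le\frac{1}{K(m/M,M/m,t)}K_2\,A\sharp_t B$ with $\frac{1}{K(m/M,M/m,t)}\ge 1$, hence is dominated by the $K_2$-estimate and contributes nothing new to the minimum.
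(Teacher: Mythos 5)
Your proposal is correct and follows essentially the same route as the paper: the bound $A\natural_t B\le c\,A\sharp_t B$ obtained by dualizing Corollary \ref{cor_geo_less_spec} through inverses (the paper's inequality \eqref{AH_SGM_theorem_ineq01}) together with the bound from Corollary \ref{1}, then operator monotonicity of $x^r$, Lemma \ref{7}, and Corollary \ref{8} (which you merely unpack into \eqref{9} plus Corollary \ref{cor_geo_less_spec} applied to $A^r,B^r$), yielding $\kappa_1$ and $\kappa_2$ exactly as in the paper's two chains. Your reading of the typo $K^{2r}(\sqrt{m}{M},\sqrt{M}{m},t)$ as $K^{2r}\bigl(\sqrt{m/M},\sqrt{M/m},t\bigr)$ is also the intended one.
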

\begin{proof}
In Corollary \ref{cor_geo_less_spec}, we replace $A$ and $B$ by $A^{-1}$ and $B^{-1}$, respectively. Then we have
\begin{equation}\label{AH_SGM_theorem_ineq01}
A{{\natural}_{t}}B\le \frac{1}{{{K}^{2}}\left( \sqrt{\frac{m}{M}},\sqrt{\frac{M}{m}},t \right)}{{\left( \frac{{{\left( M+m \right)}^{2}}}{4Mm} \right)}^{1+t}}A{{\sharp}_{t}}B,
\end{equation}
since $\left(A^{-1}{{\natural}_{t}}B^{-1}\right)^{-1}=A{{\natural}_{t}}B$, $\left(A^{-1}{{\sharp}_{t}}B^{-1}\right)^{-1}=A{{\sharp}_{t}}B$ and
$$
K^{-2}\left(\sqrt{\frac{1/M}{1/m}},\sqrt{\frac{1/m}{1/M}},t\right)\left(\frac{(1/m+1/M)^2}{4/mM}\right)^{1+t}=K^{-2}\left(\sqrt{\frac{m}{M}},\sqrt{\frac{M}{m}},t\right)\left(\frac{(M+m)^2}{4mM}\right)^{1+t}.
$$
Since the function $f\left( x \right)={{x}^{r}}\left( 0\le r\le 1 \right)$ is  operator monotone, we have
\[\begin{aligned}
  & {{K}^{2r}}\left( \sqrt{\frac{m}{M}},\sqrt{\frac{M}{m}},t \right){{\left( \frac{4Mm}{{{\left( M+m \right)}^{2}}} \right)}^{r\left( 1+t \right)}}{{\left( A{{\natural }_{t}}B \right)}^{r}} \\ 
 & \le {{\left( A{{\sharp }_{t}}B \right)}^{r}} \quad \text{(by \eqref{AH_SGM_theorem_ineq01})}\\ 
 & \le {{\left( A{{\nabla }_{t}}B \right)}^{r}} \quad \text{(by the weighted arithmetic--geometric operator mean inequality)}\\ 
 & \le \frac{1}{K\left( m,M,t \right)}{{A}^{r}}{{\nabla }_{t}}{{B}^{r}} \quad \text{(by Lemma \ref{7})}\\ 
 & \le \frac{\frac{{{m}^{r}}{{\nabla }_{\lambda }}{{M}^{r}}}{{{m}^{r}}{{\sharp }_{\lambda }}{{M}^{r}}}}{K\left( m,M,t \right){{K}^{2}}\left( \sqrt{\frac{{{m}^{r}}}{{{M}^{r}}}},\sqrt{\frac{{{M}^{r}}}{{{m}^{r}}}},t \right)}{{\left( \frac{{{\left( {{M}^{r}}+{{m}^{r}} \right)}^{2}}}{4{{M}^{r}}{{m}^{r}}} \right)}^{1+t}}\left( {{A}^{r}}{{\natural}_{t}}{{B}^{r}} \right) \quad \text{(by Corollary \ref{8})}.\\ 
\end{aligned}\]
Consequently,
\[{{\left( A{{\natural }_{t}}B \right)}^{r}}\le \frac{\frac{{{m}^{r}}{{\nabla }_{\lambda }}{{M}^{r}}}{{{m}^{r}}{{\sharp }_{\lambda }}{{M}^{r}}}{{\left( \frac{{{\left( M+m \right)}^{2}}}{4Mm} \right)}^{r\left( 1+t \right)}}{{\left( \frac{{{\left( {{M}^{r}}+{{m}^{r}} \right)}^{2}}}{4{{M}^{r}}{{m}^{r}}} \right)}^{1+t}}}{K\left( m,M,t \right){{K}^{2r}}\left( \sqrt{\frac{m}{M}},\sqrt{\frac{M}{m}},t \right){{K}^{2}}\left( \sqrt{\frac{{{m}^{r}}}{{{M}^{r}}}},\sqrt{\frac{{{M}^{r}}}{{{m}^{r}}}},t \right)}{{A}^{r}}{{\natural }_{t}}{{B}^{r}}.\]

Similarly we have,
\begin{eqnarray*}
&& \left(A\natural_tB\right)^r\le \left(\frac{(M^{1+t}+m^{1+t})^2}{4M^{1+t}m^{1+t}}\right)^r\left(A\nabla_t B\right)^r\quad \text{(by Corollary \ref{1})}\\
&& \le  \left(\frac{(M^{1+t}+m^{1+t})^2}{4M^{1+t}m^{1+t}}\right)^r\frac{1}{K(m,M,t)} A^r \nabla_t B^r\quad \text{(by Lemma \ref{7})}\\
&& \le \frac{m^r\nabla M^r}{m^r\sharp M^r}\frac{\left(\frac{(M^{1+t}+m^{1+t})^2}{4M^{1+t}m^{1+t}}\right)^r\left(\frac{(M^r+m^r)^2}{4M^rm^r}\right)^{1+t}}{K(m,M,t)K^2\left(\sqrt{\frac{m^r}{M^r}},\sqrt{\frac{M^r}{m^r}},t\right)} A^r \natural_t B^r\quad \text{(by Corollary \ref{8})}.
\end{eqnarray*}
\end{proof}

\begin{remark}
There is no ordering of two constants $\kappa_1(m,M,r,t,\lambda)$ and $\kappa_2(m,M,r,t,\lambda)$ appearing in Theorem \ref{AH_SGM_theorem01}.
To compare them, it suffices to consider the function
$$
\delta(x,t):=\left(\frac{(x+1)^2}{4x}\right)^{t+1}-\frac{(x^{t+1}+1)^2}{4x^{t+1}}K^2(x,t),\quad (x>1,\,\,\,0\le t \le 1),
$$
by putting $x:=M/m>1$.
Then we have $\delta(10,0.1)\simeq 0.10068$ and $\delta(10,0.9)\simeq -10.011$.
Although we generally have the inequality $\left(\dfrac{(x+1)^2}{4x}\right)^{t+1} \le \dfrac{(x^{t+1}+1)^2}{4x^{t+1}}$ for $x>1$ and $0\le t \le 1$ by elementary calculations, the fact $K^2(x,t)\le 1$ for $0\le t \le 1$, affects this comparison.
\end{remark}

\section{On the generalized Kantorovich constant}\label{sec3}
In our discussion of the above inequalities, different constants have shown up. This section attempts to give a detailed discussion of these constants. This helps better understand the stated results. The conclusion of this section is a hidden relation between Corollaries \ref{cor_spec_less_geo} and \ref{cor_geo_less_spec}.	
	
    Due to the technical nature of the proofs of the results in this section, we have added the proofs in the appendix at the end of this paper. So, results in Section 3 will be presented without proofs so that the reader can easily follow without distraction.

In the following proposition, we present a monotonicity behavior of the generalized Kantorovich constant $K(x,t)$.
\begin{proposition}\label{proposition01}
The generalized Kantorovich constant $K(x,t)$ satisfies the following monotonicity properties in $x$. 
\begin{itemize}
\item[(i)] If $t<0$ or $t>1$, then $K(x,t)$ is monotone decreasing when $0<x<1$, and monotone increasing when $x>1$. $K(x,t)$ takes the minimum value $1$ when $x=1$.
\item[(ii)] If $0<t<1$, then $K(x,t)$ is monotone increasing when $0<x<1$, and monotone decreasing when $x>1$. $K(x,t)$ takes the maximum value $1$ when $x=1$.
\end{itemize}
\end{proposition}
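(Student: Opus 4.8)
The plan is to understand $K(x,t)$ through the sign of $\partial_x\log K(x,t)$, after a symmetry reduction. A direct manipulation of \eqref{def_K01} gives the reflection symmetry $K(x,t)=K(1/x,t)$ for all $x>0$ and $t\in\mathbb R$ (equivalently, from $K(m,M,t)=K(M/m,t)$ together with the evident invariance $K(m,M,t)=K(M,m,t)$ of the three–variable constant). Expanding $x^t$ about $x=1$ shows $\frac{x^t-x}{(t-1)(x-1)}\to1$ and $\frac{t-1}{t}\cdot\frac{x^t-1}{x^t-x}\to1$ as $x\to1$, so $K(1,t)=1$; and a sign check of the three factors shows $K(x,t)>0$, so $\log K$ is legitimate. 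Consequently it suffices to prove that on $(1,\infty)$ the map $x\mapsto K(x,t)$ is strictly increasing when $t<0$ or $t>1$ and strictly decreasing when $0<t<1$: the reflection symmetry then makes the behaviour on $(0,1)$ the mirror image, so $x=1$ becomes the claimed strict global minimum (resp. maximum) with value $1$.

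Next, set $L=\log x>0$. Writing $x^t-1=\int_0^t x^sL\,ds$ and similarly for $x^t-x$ and $x-1$, one rewrites $K(x,t)=\tilde M^{\,t}M_1^{\,1-t}/M_0$ with $M_0=(e^L-1)/L$, $\tilde M=(e^{tL}-1)/(tL)$ and $M_1=(e^{tL}-e^L)/((t-1)L)$. Differentiating the logarithm of each of these three factors in $L$ and using $\frac1{1-e^{-u}}=\frac12+\frac12\coth\frac u2$, all the $t$-only and $1/L$ terms cancel, leaving
\[
\frac{\partial}{\partial L}\log K(x,t)=t^2\rho(tL)+(1-t)^2\rho\bigl((1-t)L\bigr)-\rho(L),\qquad \rho(u):=\tfrac12\coth\tfrac u2-\tfrac1u .
\]
Here $\rho$ is odd, extends continuously by $\rho(0)=0$, and $\rho'(u)=u^{-2}-e^u(e^u-1)^{-2}>0$ on $(0,\infty)$ (equivalently $2\sinh\tfrac u2>u$).

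It remains to determine the sign of the right–hand side $F(t)$ for fixed $L>0$, and this is the heart of the matter. Put $\eta(s):=s^2\rho(sL)$, so $F(t)=\eta(t)+\eta(1-t)-\eta(1)$; note $\eta$ is odd, $\eta(0)=0$, $F(0)=F(1)=0$, and $F(t)=F(1-t)$. Everything will follow from the strict convexity of $\eta$ on $[0,\infty)$: for $t\in(0,1)$, convexity with $\eta(0)=0$ gives $\eta(t)\le t\,\eta(1)$ and $\eta(1-t)\le(1-t)\,\eta(1)$, hence $F(t)<0$; for $t>1$, oddness turns $F(t)$ into $\eta(t)-\eta(t-1)-\eta(1)$, which is $>0$ because the increments $\eta(s+1)-\eta(s)$ of a convex function are nondecreasing in $s$, so $\eta(t)-\eta(t-1)>\eta(1)-\eta(0)=\eta(1)$; and $t<0$ reduces to $1-t>1$ via $F(t)=F(1-t)$. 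Now $\eta(s)=L^{-2}\mu(sL)$ with $\mu(u)=u^2\rho(u)=\tfrac{u^2}{2}\coth\tfrac u2-u$, so the required convexity is equivalent (discarding the linear term and substituting $v=u/2$) to strict convexity of $v\mapsto v^2\coth v$ on $(0,\infty)$, and a short computation gives
\[
\frac{d^2}{dv^2}\bigl(v^2\coth v\bigr)>0\iff \cosh v\,(\sinh^2 v+v^2)>2v\sinh v ;
\]
the last inequality holds for $v>0$ since $\sinh^2 v+v^2\ge 2v\sinh v$ by AM–GM (strictly, as $\sinh v\neq v$) while $\cosh v>1$.

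The step I expect to be hardest is exactly this reduction — recognizing that the sign of $\partial_x\log K$ is governed by the convexity of $s\mapsto s^2\rho(sL)$, and then pinning that down via the elementary hyperbolic inequality above. One could instead differentiate \eqref{def_K01} by hand, clearing denominators in $\frac{(1-t)(tx^{t-1}-1)}{x^t-x}+\frac{t^2x^{t-1}}{x^t-1}-\frac1{x-1}$ to obtain a single closed expression $P(x)$ with $P(1)=0$ and then track the sign of $P$; this route is available but markedly messier, which is why the change of variable $L=\log x$ and the reformulation through $\rho$ and $\coth$ are worth the setup.
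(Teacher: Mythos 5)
Your argument is correct, and it takes a genuinely different route from the paper's. The paper differentiates $K(x,t)$ directly in $x$ and factors
$\frac{dK(x,t)}{dx}=\frac{f_t(x)g_t(x)}{(1-t)(1-x^t)(x-1)^2}\left(\frac{t-1}{t}\frac{x^t-1}{x^t-x}\right)^t$ with $f_t(x)=1-x^t+xt-t$ and $g_t(x)=tx^{t-1}-1+(1-t)x^t$; the signs of $f_t$ and $g_t$ are pinned down by one differentiation each (both vanish at $x=1$), the sign of $(1-t)(1-x^t)$ supplies the case split, and the ranges $0<x<1$ and $x>1$ are treated directly, the symmetry $K(1/x,t)=K(x,t)$ being mentioned only in a follow-up remark. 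You instead substitute $L=\log x$, recognize $K=\tilde M^{\,t}M_1^{\,1-t}/M_0$ as a combination of exponential divided differences, and derive the identity $\partial_L\log K=t^2\rho(tL)+(1-t)^2\rho((1-t)L)-\rho(L)$ with $\rho(u)=\frac12\coth\frac u2-\frac1u$, whose sign in all three parameter ranges follows from a single lemma, the strict convexity of $v\mapsto v^2\coth v$. I checked the identity (it is correct; e.g., at $t=2$ it collapses to $2\coth L-\coth\frac L2=\tanh\frac L2$, consistent with $K(x,2)=\frac{(x+1)^2}{4x}$), and the convexity, chord and increment arguments are sound, including the reduction of $t<0$ to $t>1$ via $F(t)=F(1-t)$ and of $0<x<1$ to $x>1$ via $K(1/x,t)=K(x,t)$, a known fact (\cite[Theorem 2.54 (i)]{FHPS2005}) which indeed also follows by direct manipulation. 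What each approach buys: the paper's proof is shorter and entirely elementary, at the cost of case-by-case sign bookkeeping; yours makes the $t\leftrightarrow 1-t$ and $x\leftrightarrow 1/x$ symmetries structural, yields strict monotonicity, and compresses the cases $0<t<1$, $t>1$, $t<0$ into one convexity statement, at the cost of the hyperbolic setup. If you write it up, state explicitly that $M_0,\tilde M,M_1>0$ for all $t\neq 0,1$ (so the weighted-product rearrangement and the logarithm are legitimate, also for $t<0$ and $t>1$), and note that strict convexity of $\eta(s)=s^2\rho(sL)$ on $[0,\infty)$ is meant in the chord sense ($\eta''>0$ on $(0,\infty)$ even though $\eta''(0)=0$), which is all your chord and increment inequalities require.
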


\begin{remark}
The case  $0<x<1$ in Proposition \ref{proposition01} can be proven by the fact $K(1/x,t)=K(x,t)$ for any $t >0$ and $x>0$,  \cite[Theorem 2.54 (i)]{FHPS2005}.
\end{remark}
 
 Proposition \ref{proposition01} can be stated equivalently in the following form, where
\begin{equation}\label{def_K02}
K(m,M,t):=\frac{(mM^t-Mm^t)}{(t-1)(M-m)}\left(\frac{t-1}{t}\frac{M^t-m^t}{mM^t-Mm^t}\right)^t,\quad (0<m<M)
\end{equation}

\begin{corollary}\label{corollary01}
Let $0<m_1<M_1$ and $0<m_2<M_2$ such that $\dfrac{M_1}{m_1}\le \dfrac{M_2}{m_2}$.
If $0<t<1$, then $K(m_1,M_1,t) \ge K(m_2,M_2,t)$. If $t<0$ or $t>1$, then $K(m_1,M_1,t) \le K(m_2,M_2,t)$.
\end{corollary}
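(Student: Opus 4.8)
The plan is to reduce the two–variable constant in \eqref{def_K02} to the one–variable constant in \eqref{def_K01} by a scaling identity, and then simply read off the conclusion from the monotonicity established in Proposition \ref{proposition01}.

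First I would record the homogeneity relation
$$K(m,M,t)=K\!\left(\frac{M}{m},\,t\right),\qquad 0<m<M,\ t\in\mathbb{R}\setminus\{0,1\}.$$
To verify it, set $x:=M/m>1$ and substitute $M=mx$ into \eqref{def_K02}. One computes $mM^t-Mm^t=m^{1+t}(x^t-x)$, $M-m=m(x-1)$, and $M^t-m^t=m^t(x^t-1)$. Inserting these, the prefactor $\dfrac{mM^t-Mm^t}{(t-1)(M-m)}$ contributes a factor $m^{t}$, while the bracketed $t$-th power contributes a factor $m^{-t}$ (since inside the bracket the ratio $\dfrac{M^t-m^t}{mM^t-Mm^t}$ scales like $m^{-1}$ and is then raised to the power $t$). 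These cancel, and what remains is exactly the right–hand side of \eqref{def_K01}. This is a short, routine computation; it is also essentially the invariance property recorded in \cite[Theorem 2.54]{FHPS2005}.

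With this identity in hand the corollary is immediate. Under the hypotheses we have $1<\dfrac{M_1}{m_1}\le\dfrac{M_2}{m_2}$, so both ratios lie in the half–line $x>1$ on which Proposition \ref{proposition01} describes the behaviour of $x\mapsto K(x,t)$. If $0<t<1$, part (ii) of that proposition says $K(\cdot,t)$ is monotone decreasing on $(1,\infty)$, so $K(m_1,M_1,t)=K(M_1/m_1,t)\ge K(M_2/m_2,t)=K(m_2,M_2,t)$. If instead $t<0$ or $t>1$, part (i) says $K(\cdot,t)$ is monotone increasing on $(1,\infty)$, which yields the reversed inequality $K(m_1,M_1,t)\le K(m_2,M_2,t)$.

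There is no genuine obstacle in this argument; the only points requiring a little care are the exact bookkeeping of the powers of $m$ in the scaling identity, and observing that the strict inequalities $m_i<M_i$ place both ratios strictly to the right of $1$, so that we are indeed in the monotone regime covered by Proposition \ref{proposition01} rather than at the turning point $x=1$.
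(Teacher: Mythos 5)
Your proposal is correct and follows essentially the same route as the paper: the paper presents Corollary \ref{corollary01} as a direct restatement of Proposition \ref{proposition01} via the scale-invariance $K(m,M,t)=K(M/m,t)$ of the constant in \eqref{def_K02}, which is exactly the identity you verify before invoking the monotonicity on $x>1$. Your explicit bookkeeping of the powers of $m$ and the appeal to parts (i)/(ii) of the proposition are accurate, so there is nothing to correct.
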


To better understand these constants, and to reach our goal, we need the following lemma.
\begin{lemma}\label{lemma01}
\hfill
\begin{itemize}
\item[(I)] Let $0< t < 1$.  For $x > 1$, the following inequality holds
\begin{equation}\label{lemma01_ineq01}
\frac{x+1}{x-1}> (1-t)^2\left(\frac{x+x^t}{x-x^t}\right)+t^2\left(\frac{x^t+1}{x^t-1}\right).
\end{equation}
For $0<x<1$, we have the inequality \eqref{lemma01_ineq01r} below.

\item[(II)] Let $t > 1$ or $t < 0$. For $x>1$, the following inequality holds
\begin{equation}\label{lemma01_ineq01r}
\frac{x+1}{x-1}< (1-t)^2\left(\frac{x+x^t}{x-x^t}\right)+t^2\left(\frac{x^t+1}{x^t-1}\right).
\end{equation}
For $0<x<1$, we have the inequality \eqref{lemma01_ineq01} above.
\end{itemize}
\end{lemma}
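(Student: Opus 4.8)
The plan is to collapse the four assertions into a single inequality for $x>1$, translate it into a comparison of hyperbolic cotangents, and then expand the difference of the two sides into a series each of whose terms has a fixed sign. First I would note that at $t=0$ and $t=1$ both \eqref{lemma01_ineq01} and \eqref{lemma01_ineq01r} hold with equality, so I may assume $t\notin\{0,1\}$. Next, the substitution $x\mapsto 1/x$ sends each of the three ratios $\frac{x+1}{x-1}$, $\frac{x^t+1}{x^t-1}$, $\frac{x+x^t}{x-x^t}$ to its negative (a one-line check for each, using $\frac{x+x^t}{x-x^t}=\frac{x^{1-t}+1}{x^{1-t}-1}$), and it interchanges $\{x>1\}$ with $\{0<x<1\}$. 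Hence \eqref{lemma01_ineq01} at $x$ is equivalent to \eqref{lemma01_ineq01r} at $1/x$ and conversely, so it suffices to prove that for $x>1$ one has \eqref{lemma01_ineq01} when $0<t<1$ and \eqref{lemma01_ineq01r} when $t<0$ or $t>1$; the two statements for $0<x<1$ then come for free.

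For the reduced inequality I would put $x=e^{2w}$ with $w>0$, so that $\frac{x^a+1}{x^a-1}=\coth(aw)$ for every $a>0$, turning the claim into the comparison of $\coth w$ with $(1-t)^2\coth((1-t)w)+t^2\coth(tw)$. Then I would invoke the Mittag--Leffler expansion $\coth z=\frac1z+2z\sum_{n\ge1}(z^2+n^2\pi^2)^{-1}$, valid for every nonzero real $z$. Because $\frac{(1-t)^2}{(1-t)w}+\frac{t^2}{tw}=\frac1w$, the polar terms cancel and one is left with
\[
\coth w-(1-t)^2\coth((1-t)w)-t^2\coth(tw)=2w\sum_{n\ge1}\left(\frac{1}{w^2+n^2\pi^2}-\frac{(1-t)^3}{(1-t)^2w^2+n^2\pi^2}-\frac{t^3}{t^2w^2+n^2\pi^2}\right).
\]

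It then remains to sign the general term. Fixing $n$ and writing $v=w^2>0$, $c=n^2\pi^2>0$, I would rewrite $\frac{(1-t)^3}{(1-t)^2v+c}=\frac{1-t}{v+p}$ and $\frac{t^3}{t^2v+c}=\frac{t}{v+q}$ with $p=c(1-t)^{-2}>0$ and $q=ct^{-2}>0$, so that the $n$-th term is $\frac{1}{v+c}-\frac{1-t}{v+p}-\frac{t}{v+q}$. Over the common positive denominator $(v+c)(v+p)(v+q)$ the numerator is a priori quadratic in $v$, but its $v^2$-coefficient is $1-(1-t)-t=0$, so it is affine; a short computation using $(1-t)p+tq=\frac{c}{t(1-t)}$ and $1-(1-t)^3-t^3=3t(1-t)$ evaluates it to $\frac{c}{t(1-t)}\big(3c+(t^2-t+1)v\big)$. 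Since $3c+(t^2-t+1)v>0$, every term has the sign of $t(1-t)$, so the series (multiplied by $2w>0$) is $\ge 0$ for $0<t<1$ and $\le 0$ for $t<0$ or $t>1$; combined with the reduction step, this is exactly the lemma.

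The only genuinely non-mechanical point — and the place I expect to spend effort — is spotting the $\coth$-substitution together with the cancellation that kills the leading coefficient of each term; once that is in hand, the sign of each term, hence of the whole difference, is immediate. I would also note in passing that the tempting shortcut of deducing the $0<t<1$ case from concavity of $a\mapsto a\coth(\lambda a)$ fails, since that function is not concave on $(0,\infty)$, which is why the term-by-term argument seems unavoidable.
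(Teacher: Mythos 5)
Your argument is correct, and it takes a genuinely different route from the paper's. The paper sets $f(x,t)$ equal to the difference of the two sides, uses the symmetry $f(x,1-t)=f(x,t)$ to restrict to $0\le t\le 1/2$, and proves $\partial f/\partial t\ge 0$ for $x>1$ through a cascade of auxiliary functions ($g,h,k,l,u$) whose signs are pinned down by repeated differentiation; case (II) is then deduced by the substitution $t=1/s$, and the range $0<x<1$ by $x\mapsto 1/x$, exactly as in your reduction step. You instead substitute $x=e^{2w}$, recognize all three ratios as hyperbolic cotangents, and expand via $\coth z=\frac{1}{z}+2z\sum_{n\ge1}\left(z^2+n^2\pi^2\right)^{-1}$; the poles cancel because $(1-t)+t=1$, and your evaluation of the $n$-th term as $\frac{c}{t(1-t)}\bigl(3c+(t^2-t+1)v\bigr)$ over the positive denominator $(v+c)(v+p)(v+q)$ checks out, so every term carries the sign of $t(1-t)$ and (I), (II) drop out simultaneously for all real $t$. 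What your approach buys is uniformity and brevity: one computation replaces the paper's separate treatment of $0\le t\le1$ and $t\ge1$ and its multi-level derivative analysis. Its cost is the appeal to the classical Mittag--Leffler expansion of $\coth$, an external analytic input the paper avoids; you should state explicitly that the three series converge absolutely (terms are $O(n^{-2})$), which licenses the termwise combination. Two harmless caveats: the endpoint values $t=0,1$ (and likewise the excluded cases in the rewriting $p=c(1-t)^{-2}$, $q=ct^{-2}$) must be read as limits, since one ratio is formally $0\cdot\infty$ there --- your observation that they give equality is the right reading, and the paper's formulation has the same feature.
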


Applying Lemma \ref{lemma01}, we have the following bounds of Kantorovich constant $K(x,t)$.
\begin{theorem}\label{theorem01}
Define $L(x,t):=\left(\dfrac{x^t+x}{x+1}\right)\left(\dfrac{x^t+1}{x^t+x}\right)^t$ for $x>0$ and $t \in \mathbb{R}$. 
\begin{itemize}
\item[(I)] If $0\le t \le 1$, then 
\begin{equation}\label{theorem01_ineq01}
L(x,t)\le K(x,t),\quad (x>0).
\end{equation}
\item[(II)] If $t\ge 1$ or $t\le 0$, then
\begin{equation}\label{theorem01_ineq01r}
L(x,t)\ge K(x,t),\quad (x>0).
\end{equation}
\end{itemize}
\end{theorem}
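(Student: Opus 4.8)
The plan is to fix $t$ and study $g(x):=\log K(x,t)-\log L(x,t)$ as a one-variable function of $x>0$. Since $L(1,t)=1$ by direct substitution and $K(x,t)\to1$ as $x\to1$ (the limit recalled after Proposition~\ref{proposition01}), we have $g(1)=0$, so both inequalities \eqref{theorem01_ineq01} and \eqref{theorem01_ineq01r} reduce to identifying the sign of $g'$ on $(0,1)$ and on $(1,\infty)$ (the cases $t=0,1$ being trivial, since then $K=L\equiv1$).

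The crux is the computation of $g'$. Passing to $y=\log x$ and differentiating $\log K$ and $\log L$ term by term, then simplifying by partial fractions in $x^{t-1}$ together with elementary identities such as $\tfrac1{e^{z}-1}=\tfrac12(\coth\tfrac z2-1)$ and $\tfrac1{e^{z}+1}=\tfrac12(1-\tanh\tfrac z2)$ (and their reflections $z\mapsto-z$), the derivative collapses to
\[
\frac{d}{dy}\,g(e^y)=\frac{(1-t)^2}{\sinh\!\bigl((1-t)y\bigr)}+\frac{t^2}{\sinh(ty)}-\frac1{\sinh y}.
\]
Via the identity $\tfrac1{\sinh y}=\coth\tfrac y2-\coth y$, the right-hand side equals $D(y)-D(y/2)$, where $D(s):=\coth s-(1-t)^2\coth\bigl((1-t)s\bigr)-t^2\coth(ts)$ is exactly the quantity shown to be nonnegative by Lemma~\ref{lemma01} after the reparametrisation $x=e^{2s}$; this is where Lemma~\ref{lemma01} feeds into the argument.

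To fix the sign I would invoke subadditivity of $\varphi(s):=s^2/\sinh s$ on $(0,\infty)$: since $\varphi(s)/s=s/\sinh s$ is decreasing (because $\bigl((\sinh s)/s\bigr)'=(s\cosh s-\sinh s)/s^2>0$ for $s>0$), one gets $\varphi(a+b)=a\tfrac{\varphi(a+b)}{a+b}+b\tfrac{\varphi(a+b)}{a+b}\le\varphi(a)+\varphi(b)$ for all $a,b>0$. In part~(I), for $y>0$ set $a=(1-t)y$ and $b=ty$, so $a+b=y$: the displayed derivative is $y^{-2}\bigl(\varphi(a)+\varphi(b)-\varphi(a+b)\bigr)\ge0$, hence $g(x)=\int_1^x g'(u)\,du\ge0$ for $x\ge1$; for $y<0$ the same formula carries the opposite sign, so $g'\le0$ on $(0,1)$ and again $g\ge g(1)=0$ there, giving $L(x,t)\le K(x,t)$ for all $x>0$. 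In part~(II), where $t\ge1$ or $t\le0$, exactly one of $(1-t),t$ is negative; after absorbing the sign of the corresponding $\sinh$ the derivative becomes $\pm y^{-2}\bigl(\varphi(c)-\varphi(c')-\varphi(c-c')\bigr)$ with $0<c'<c$ and $c=c'+(c-c')$, and subadditivity of $\varphi$ makes this negative on $(1,\infty)$ and positive on $(0,1)$, so $g\le0$ throughout, i.e. $L(x,t)\ge K(x,t)$. (On the authors' route the sign is instead supplied by the reversed inequality of Lemma~\ref{lemma01}(II).)

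The main obstacle is the middle step: carrying out the differentiation of $\log K-\log L$ and recognising, after a genuinely lengthy simplification, that it telescopes to the clean hyperbolic expression above. A secondary point, relevant only if one insists on passing strictly through Lemma~\ref{lemma01} rather than through subadditivity of $\varphi$, is that $D(y)-D(y/2)\ge0$ requires $D$ to be nondecreasing (equivalently, that $s^3/\sinh^2 s$ be subadditive), which again reduces to the monotonicity of $(\sinh s)/s$. The sign bookkeeping for $0<x<1$ and for $t\notin[0,1]$ is routine but needs care.
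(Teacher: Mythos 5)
Your argument is correct, and it takes a genuinely different route from the paper's. The paper proves (I) by reducing \eqref{theorem01_ineq01}, via the elementary bound $a^t\ge a/((1-t)a+t)$, to the rational inequality of Lemma \ref{lemma01}, whose proof is a long cascade of second- and third-derivative sign analyses, and then deduces (II) from (I) through the substitution $t=1/s$, $x=y^s$. You instead study $g=\log K-\log L$ directly: your key formula does check out, since a short partial-fraction computation gives
\begin{equation*}
\frac{d}{dx}\bigl(\log K(x,t)-\log L(x,t)\bigr)=\frac{2(1-t)^2x^t}{x^2-x^{2t}}+\frac{2t^2x^{t-1}}{x^{2t}-1}-\frac{2}{x^2-1},
\end{equation*}
and multiplying by $x=e^{y}$ yields exactly $\frac{(1-t)^2}{\sinh((1-t)y)}+\frac{t^2}{\sinh(ty)}-\frac{1}{\sinh y}=y^{-2}\bigl(\varphi((1-t)y)+\varphi(ty)-\varphi(y)\bigr)$ with the odd function $\varphi(s)=s^2/\sinh s$. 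Subadditivity of $\varphi$ on $(0,\infty)$, which you correctly derive from the monotonicity of $s/\sinh s$, then fixes the sign of $g'$ in all the regimes you list, and integrating from $y=0$ (where $K=L=1$) gives both (I) and (II). What your route buys is a uniform treatment of the two parts and of $0<x<1$ versus $x>1$ (oddness of $\varphi$ replaces the paper's $x\mapsto 1/x$ and $t\mapsto 1/s$ symmetries), and complete independence from Lemma \ref{lemma01}; what it does not do is reprove that lemma, which the paper records as a statement of independent interest. Your side remark that the derivative equals $D(y)-D(y/2)$ with $D(s)=\coth s-(1-t)^2\coth((1-t)s)-t^2\coth(ts)$ is accurate, and you are right that the lemma's bare positivity $D\ge 0$ would not suffice there, which is why your actual sign argument goes through $\varphi$; the only cosmetic caveats are that the central simplification is asserted rather than displayed (it is, in fact, shorter than you fear) and that the endpoint cases $t\in\{0,1\}$, where $K$ is defined by a limit, merit exactly the one line you give them.
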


Note that we have 
$$
L(x,1/2)=\frac{\sqrt{x}(\sqrt{x}+1)^2}{x+1}\le \frac{2\sqrt[4]{x}}{\sqrt{x}+1}=K(x,1/2)
$$
and
$$
L(x,2)=L(x,-1)=\frac{(x^2+1)^2}{x(x+1)^2}\ge \frac{(x+1)^2}{4x}=K(x,2)=K(x,-1)
$$
for special cases.

\begin{corollary}\label{cor01}
\hfill
\begin{itemize}
\item[(I)] For $0\le t \le 1$, we have
\begin{equation}\label{corollary01_ineq01}
K(x^2,t)\le K^2(x,t),\quad (x>0).
\end{equation}
\item[(II)] For $t\ge 1$ or $t \le 0$, we have
\begin{equation}\label{corollary02_ineq01}
K(x^2,t)\ge K^2(x,t),\quad (x>0).
\end{equation}
\end{itemize}
\end{corollary}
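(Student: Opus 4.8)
The plan is to reduce both parts of the corollary to Theorem \ref{theorem01} through the single algebraic identity
\[
K(x^2,t)=L(x,t)\,K(x,t),\qquad x>0,\quad t\in\mathbb{R}\setminus\{0,1\},
\]
which, once it is in hand, makes the statement immediate.

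First I would establish this identity by a direct computation from the definition \eqref{def_K01}. Forming the quotient $K(x^2,t)/K(x,t)$, the only factorizations needed are
\[
x^{2t}-x^2=(x^t-x)(x^t+x),\qquad x^{2t}-1=(x^t-1)(x^t+1),\qquad x^2-1=(x-1)(x+1).
\]
Substituting these into the rational prefactor and into the base of the $t$-th power, the factor $x^t-x$ cancels from both the prefactor and the power, and one is left with exactly
\[
\frac{K(x^2,t)}{K(x,t)}=\frac{x^t+x}{x+1}\left(\frac{x^t+1}{x^t+x}\right)^t=L(x,t).
\]
The degenerate parameters $t=0$ and $t=1$ are dealt with by continuity, since there $K(x^2,t)=1=L(x,t)K(x,t)$. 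This is the only computational step, and it is routine once the factorizations are noticed.

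With the identity available, I would then invoke Theorem \ref{theorem01}. Because $K(x,t)$ is a positive constant for every admissible $x$ and $t$, and $L(x,t)>0$ as well, multiplying the inequality $L(x,t)\le K(x,t)$ (valid for $0\le t\le 1$ by Theorem \ref{theorem01}(I)) by $K(x,t)$ yields
\[
K(x^2,t)=L(x,t)K(x,t)\le K^2(x,t),
\]
which is part (I); multiplying the reversed inequality $L(x,t)\ge K(x,t)$ (valid for $t\ge 1$ or $t\le 0$ by Theorem \ref{theorem01}(II)) by $K(x,t)$ gives part (II) in precisely the same manner.

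I do not anticipate a serious obstacle: the whole content lies in spotting the factorization identity $K(x^2,t)=L(x,t)K(x,t)$, after which Theorem \ref{theorem01} does all the work. The only points deserving a little care are verifying that $K(x,t)$ and $L(x,t)$ are positive so that the inequalities may be multiplied, and checking that the cancellation in the quotient is unaffected by the various sign patterns of $t-1$, $x-1$ and $x^t-x$ — which it is, since those signs occur identically in the numerator and denominator of $K(x^2,t)/K(x,t)$.
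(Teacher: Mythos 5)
Your proof is correct, and it rests on the same pillar as the paper's: everything is reduced to Theorem \ref{theorem01} via the factorizations $x^{2t}-x^{2}=(x^{t}-x)(x^{t}+x)$, $x^{2t}-1=(x^{t}-1)(x^{t}+1)$, $x^{2}-1=(x-1)(x+1)$. The difference is one of packaging, but it is a genuinely useful one: the paper shows by these manipulations that \eqref{corollary01_ineq01} is \emph{equivalent} to the inequality \eqref{theorem01_ineq01}, and then for part (II) it re-runs a separate substitution argument ($t:=1/s$, $y:=x^{1/s}$, taking $s$-th powers and inverses) to obtain the reversed inequality; you instead isolate the exact identity $K(x^{2},t)=L(x,t)\,K(x,t)$, after which both parts follow at once by multiplying \eqref{theorem01_ineq01} or \eqref{theorem01_ineq01r} by the positive quantity $K(x,t)$, so no second substitution argument is needed. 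Your points of care are the right ones and are easily checked: the bases $\frac{t-1}{t}\frac{x^{t}-1}{x^{t}-x}$ occurring in $K(x,t)$ and $K(x^{2},t)$ are positive for $x>0$, $x\neq 1$, $t\neq 0,1$, so the $t$-th powers may be combined, and $K(x,t)>0$, $L(x,t)>0$, so the multiplication preserves the inequality; the degenerate values $t\in\{0,1\}$ and $x=1$ give $K=1$ and equality, exactly as in the paper. In short, your identity is a cleaner statement of the algebra the paper performs implicitly, and it buys a shorter, unified proof of part (II).
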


%\begin{remark}
%Putting $x:=\sqrt{\dfrac{M}{m}}>1$ for $0<m<M$, we have the relation with  the different symbol given in \eqref{def_K02}:
%$$
%K\left(\frac{m}{M},\frac{M}{m},t\right)=K\left(\frac{1}{x},x,t\right)=K(x^2,t),\quad 
% K\left(\sqrt{\frac{m}{M}},\sqrt{\frac{M}{m}},t\right)=K\left(\sqrt{\frac{1}{x}},\sqrt{x},t\right)=K(x,t).
%$$
%With these and the obtained inequality \eqref{corollary01_ineq01} for $0\le t \le 1$, we have the following inequality: 
%\begin{equation}\label{corollary01_ineq02}
%K\left(\frac{m}{M},\frac{M}{m},t\right)\le K^2\left(\sqrt{\frac{m}{M}},\sqrt{\frac{M}{m}},t\right)
%\end{equation}
% for $0<m<M$ and $0\le t \le 1$. 
%\end{remark}

Now we are ready to present the relation between Corollaries \ref{cor_spec_less_geo} and \ref{cor_geo_less_spec}.
\begin{proposition}\label{last_prop}
Let $A,B\in \mathcal B\left( \mathcal H \right)$ be such that $0<m\le A,B\le M$, and let $0\le t\le 1$. Then
\begin{equation}\label{last_prop_ineq01}
A{{\natural }_{t}}B\le \eta(m,M,t)A{{\sharp }_{t}}B\le \Gamma(m,M,t)A{{\sharp }_{t}}B,
\end{equation}
where $\eta(m,M,t)$ and $\Gamma(m,M,t)$ are shown in Corollary \ref{cor_geo_less_spec} and Corollary \ref{cor_spec_less_geo}, respectively in the following:
$$
\eta(m,M,t):=\frac{1}{{{K}^{2}}\left( \sqrt{\frac{m}{M}},\sqrt{\frac{M}{m}},t \right)}{{\left( \frac{{{\left( M+m \right)}^{2}}}{4Mm} \right)}^{1+t}},\quad \Gamma(m,M,t):=\frac{1}{K\left( \frac{m}{M},\frac{M}{m},t \right)}\frac{{{\left( {{M}^{1+t}}+{{m}^{1+t}} \right)}^{2}}}{4{{M}^{1+t}}{{m}^{1+t}}}.
$$
\end{proposition}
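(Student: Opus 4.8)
The first inequality in \eqref{last_prop_ineq01} is exactly Corollary \ref{cor_geo_less_spec}, so the only new content is the second one, $\eta(m,M,t)A\sharp_tB\le \Gamma(m,M,t)A\sharp_tB$. Since $A\sharp_tB$ is positive definite, this operator inequality is equivalent to the scalar inequality $\eta(m,M,t)\le \Gamma(m,M,t)$, and that is what I would prove.

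To do so I would first normalize. The generalized Kantorovich constant $K(p,q,t)$ from \eqref{def_K02} is homogeneous of degree $0$, i.e., it depends only on the ratio $q/p$ (an elementary check, compatible with the relation $K(x,1,t)=K(x,t)$). Hence, writing $x:=M/m>1$, one gets $K(\sqrt{m/M},\sqrt{M/m},t)=K(x,t)$ and $K(m/M,M/m,t)=K(x^2,t)$, while also $\frac{(M+m)^2}{4Mm}=\frac{(x+1)^2}{4x}$ and $\frac{(M^{1+t}+m^{1+t})^2}{4M^{1+t}m^{1+t}}=\frac{(x^{1+t}+1)^2}{4x^{1+t}}$. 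Thus $\eta(m,M,t)\le\Gamma(m,M,t)$ is equivalent to
\[
\frac{1}{K^2(x,t)}\left(\frac{(x+1)^2}{4x}\right)^{1+t}\le \frac{1}{K(x^2,t)}\,\frac{(x^{1+t}+1)^2}{4x^{1+t}},\qquad (x>1,\ 0\le t\le 1).
\]

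The plan is then to establish this in two steps. First, Corollary \ref{cor01}(I) gives $K(x^2,t)\le K^2(x,t)$ for $0\le t\le1$, hence $\frac{1}{K^2(x,t)}\le\frac{1}{K(x^2,t)}$; multiplying by the nonnegative factor $\bigl(\frac{(x+1)^2}{4x}\bigr)^{1+t}$ lets us replace the left side by $\frac{1}{K(x^2,t)}\bigl(\frac{(x+1)^2}{4x}\bigr)^{1+t}$. Second, after cancelling the common factor $x^{-(1+t)}$, the remaining inequality $\bigl(\frac{(x+1)^2}{4x}\bigr)^{1+t}\le \frac{(x^{1+t}+1)^2}{4x^{1+t}}$ is seen (by taking square roots and rescaling by $2$) to be equivalent to $\bigl(\frac{x+1}{2}\bigr)^{1+t}\le\frac{x^{1+t}+1}{2}$, which is just the convexity of $u\mapsto u^{1+t}$ on $(0,\infty)$ since $1+t\ge1$. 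Chaining the two steps yields the claim. I do not anticipate a real obstacle: the substantive analysis has already been carried out in Corollary \ref{cor01} (via Theorem \ref{theorem01} and Lemma \ref{lemma01}), and what remains is the scale-invariance bookkeeping for the Kantorovich constants together with the reduction of an elementary power inequality to convexity.
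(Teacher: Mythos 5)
Your treatment of the substantive part --- the scalar comparison $\eta(m,M,t)\le\Gamma(m,M,t)$ --- is correct and essentially coincides with the paper's own argument: after reducing by homogeneity to a single variable (the paper normalizes with $\sqrt{M/m}$, you with $M/m$; the bookkeeping is the same), the comparison of the Kantorovich factors is exactly $K(x^2,t)\le K^2(x,t)$, i.e.\ Corollary \ref{cor01}(I), and the remaining factor comparison is the paper's inequality \eqref{needed_rem_1}, which the paper derives from H\"older's inequality and you derive from convexity of $u\mapsto u^{1+t}$; these are equivalent elementary facts, so there is no genuine difference of route there.

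The gap is in your first sentence. The first inequality of \eqref{last_prop_ineq01}, namely $A\natural_tB\le\eta(m,M,t)\,A\sharp_tB$, is \emph{not} ``exactly'' Corollary \ref{cor_geo_less_spec}: that corollary states $A\sharp_tB\le\eta(m,M,t)\,A\natural_tB$, with the two means interchanged, and for operator inequalities $X\le cY$ does not imply $Y\le cX$. To obtain the inequality actually needed, you must apply Corollary \ref{cor_geo_less_spec} to $A^{-1},B^{-1}$ in place of $A,B$, use $\left(A^{-1}\natural_tB^{-1}\right)^{-1}=A\natural_tB$ and $\left(A^{-1}\sharp_tB^{-1}\right)^{-1}=A\sharp_tB$, the order-reversing property of the inverse, and the invariance of the constant under $(m,M)\mapsto(1/M,1/m)$, i.e.\ $\eta\left(\tfrac1M,\tfrac1m,t\right)=\eta(m,M,t)$. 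This inversion step is carried out explicitly in the paper (see \eqref{AH_SGM_theorem_ineq01}, and the analogous manipulation producing \eqref{eq_remark} inside the proof of the proposition, where the paper works with \eqref{eq_remark00} and \eqref{eq_remark} before proving $\eta\le\Gamma$). As written, your proposal asserts the first inequality rather than proving it; adding the inversion argument closes the gap and the rest stands.
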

Note that we have $A\natural_tB\le\eta(m,M,t)A\sharp_tB$, replacing $A$ and $B$ by $A^{-1}$ and $B^{-1}$ respectively in Corollary \ref{cor_geo_less_spec}, with $\eta(m,M,t)=\eta\left(\frac{1}{M},\frac{1}{m},t\right)$.
From Proposition \ref{last_prop}, we can claim that Corollary \ref{cor_geo_less_spec} gives the tighter bound than Corollary \ref{cor_spec_less_geo}.

The following is the final remark on our bound $L(x,t)$ appeared in Theorem \ref{theorem01}, for the generalized Kantorovich constant $K(x,t)$.

\begin{remark}
It is known \cite[Theorem 2.54 (iv)]{FHPS2005} that $K(x,t)$ is decreasing for $t<1/2$ and increasing for $t>1/2$. Therefore $K(x,t)$ takes the minimum value $\dfrac{2x^{1/4}}{\sqrt{x}+1}$ when $t=1/2$.
It is natural to ask whether the following inequality holds or not:
$$
L(x,t) \le \dfrac{2x^{1/4}}{\sqrt{x}+1},\quad (x>0,\,\,\,0\le t \le 1).
$$
However, we have the following example.
$$
\dfrac{2x^{1/4}}{\sqrt{x}+1} -L(x,t) \simeq -0.0171811,\quad (x=10,\,\,\,\,t=0.1).
$$
\end{remark}

\section*{Acknowledgement}
The authors would like to thank the referees for their careful and insightful suggestions to improve our manuscript. 
The author (S.F.) was partially supported by JSPS KAKENHI Grant Number 21K03341.

\vskip 0.3 true cm

{\tiny (H. R. Moradi) Department of Mathematics, Payame Noor University (PNU), P.O.Box, 19395-4697, Tehran, Iran.
	
	\textit{E-mail address:} hrmoradi@mshdiau.ac.ir}

\vskip 0.3 true cm 	 

{\tiny (S. Furuichi) Department of Information Science, College of Humanities and Sciences, Nihon University,	Setagaya-ku, Tokyo, 156-8550, Japan. }

{\tiny \textit{E-mail address:} furuichi.shigeru@nihon-u.ac.jp }

\vskip 0.3 true cm

{\tiny (M. Sababheh) Department of Basic Sciences, Princess Sumaya University for Technology, Amman 11941,
	Jordan. 
	
	\textit{E-mail address:} sababheh@yahoo.com; sababheh@psut.edu.jo}

\newpage
\section*{Appendix: Proofs of results in Section \ref{sec3}.}

{\bf Proof of Proposition \ref{proposition01}:}
We calculate
\begin{equation}\label{prop01_proof_ineq01}
\frac{dK(x,t)}{dx}=\frac{f_t(x)g_t(x)}{(1-t)(1-x^t)(x-1)^2}\left(\frac{t-1}{t}\frac{x^t-1}{x^t-x}\right)^t,
\end{equation}
where
$$
f_t(x):=1-x^t+xt-t,\quad g_t(x):=tx^{t-1}-1+(1-t)x^{t}.
$$
Then we have 
$$\dfrac{df_t(x)}{dx}=t(1-x^{t-1}),\quad \dfrac{dg_t(x)}{dx}=t(1-t)x^{t-2}(x-1).$$ 
\begin{itemize}
\item[(i)] For the case $t<0$ or $t>1$, we have $(1-t)(1-x^t)<0$ for $0<x<1$, and
$(1-t)(1-x^t)>0$ for $x>1$. We also find that $f_t(x)\le f_t(1)=0$ and $g_t(x)\le g_t(1)=0$ for all $x>0$. Thus we have  $\dfrac{dK(x,t)}{dx}\le 0$ for $0<x<1$, and $\dfrac{dK(x,t)}{dx}\ge 0$ for $x>1$.
\item[(ii)] For the case $0<t<1$, we have $(1-t)(1-x^t)>0$ for $0<x<1$, and
$(1-t)(1-x^t)<0$ for $x>1$. We also find that $f_t(x)\ge f_t(1)=0$ and $g_t(x)\ge g_t(1)=0$ for all $x>0$. Thus we have  $\dfrac{dK(x,t)}{dx}\ge 0$ for $0<x<1$, and $\dfrac{dK(x,t)}{dx}\le 0$ for $x>1$.
\end{itemize}
Finally we note that $\lim\limits_{x\to 1} K(x,t)=1$, see \cite[Theorem 2.54 (iii)]{FHPS2005} for example.
\qed \hfill

{\bf Proof of Lemma \ref{lemma01}:}
\begin{itemize}
\item[(I)]
We set the function
$$
f(x,t):=\frac{x+1}{x-1}- (1-t)^2\left(\frac{x+x^t}{x-x^t}\right)-t^2\left(\frac{x^t+1}{x^t-1}\right),\quad (x > 1,\,\,0< t< 1).
$$
Since it can be easily proven that $f(x,1/2)>0$ for $x>1$ and the symmetric property such as $f(x,1-t)=f(x,t)$, we have only to prove $f(x,t)> 0$ for $x > 1$ and $0< t < 1/2$.
Then we calculate
$$
\frac{df(x,t)}{dt} =-\frac{2g(x,t)}{(x-x^t)^2(x^t-1)^2},
$$
where
$$g(x,t):=(x^t-1)(x^t-x)h(x,t)+x^t(\log x) k(x,t)$$ in which 

$$
h(x,t):=(2t-1)x^t(1-x)+x^{2t}-x, 
$$
and 
$$k(x,t):=(t-1)^2x(x^{2t}+1)-t^2(x^2+x^{2t})+2(2t-1)x^{t+1}.$$
\begin{itemize}
\item[(i)]
 We have
$$
\frac{dh(x,t)}{dx}=x^t-1-2t^2(x-1)x^{t-1}+tx^{t-1}(2x^t-x-1),\quad \frac{d^2h(x,t)}{dx^2}=t(1-2t)x^{t-2}l(x,t),
$$
where
$$
l(x,t):=t(x-1)+x+1-2x^t.
$$
Then we have
$$
\frac{dl(x,t)}{dx}=t+1-2tx^{t-1},\quad \frac{d^2l(x,t)}{dx^2}=-2t(t-1)x^{t-2}> 0,\quad (x> 1,\,\,\,0< t < 1/2).
$$
Consequently, $\dfrac{dl(x,t)}{dx} > \dfrac{dl(1,t)}{dx}=1-t> 0$ which implies $l(x,t)> l(1,t)=0$.
Since $1-2t < 0$, this means $\dfrac{d^2h(x,t)}{dx^2}> 0$ so that we have $\dfrac{dh(x,t)}{dx}> \dfrac{dh(1,t)}{dx}=0$ which implies $h(x,t)> h(1,t)=0$  for $x > 1$ and $0< t < 1/2$.
\item[(ii)]
Similarly, we calculate
\begin{eqnarray*}
 \frac{dk(x,t)}{dx}&=&2t^3(x-1)x^{2t-1}+2t(x^t-1)+(x^t-1)^2-t^2(3x^{2t}-4x^t+2x-1),\\
 \frac{d^2k(x,t)}{dx^2}&=&-2t\left\{t+(t+1)(1-2t)x^{t-1}-t^2(1-2t)x^{2t-2}-(t-1)^2(2t+1)x^{2t-1}\right\},\\
 \frac{d^3k(x,t)}{dx^3}&=&2t(2t-1)(t-1)x^{t-3}u(x,t), \;{\text{where}}\\
\end{eqnarray*}
$$u(x,t):=(t+1)x-2t^2x^t+(t-1)(2t+1)x^{t+1}.$$
Then we have 
\begin{eqnarray*}
&&\frac{du(x,t)}{dx}=t+1-2t^3x^{t-1}+(t+1)(t-1)(2t+1)x^t,\\
&&\frac{d^2u(x,t)}{dx^2}=t(t-1)x^{t-2}\left(2t^2(x-1)+3tx+x\right)< 0,\,\,\,\,(x> 1,\,\,\,0< t < 1/2).
\end{eqnarray*}
Thus  $\dfrac{du(x,t)}{dx}<\dfrac{du(1,t)}{dx}=t(t-1)< 0$ which implies $u(x,t)< u(1,t)=0$.
Since $2t-1< 0$, this means $\dfrac{d^3k(x,t)}{dx^3}< 0$ so that $\dfrac{d^2k(x,t)}{dx^2}< \dfrac{d^2k(1,t)}{dx^2}=0$ which implies $\dfrac{dk(x,t)}{dx}< \dfrac{dk(1,t)}{dx}=0$. Therefore $k(x,t)< k(1,t)=0$ for $x > 1$ and $0< t < 1/2$.
\end{itemize}
From (i) and (ii), we have $g(x,t)< 0$, namely we have $\dfrac{df(x,t)}{dt}> 0$ which implies $f(x,t)> \lim\limits_{t \to 0^+}f(x,t)=0$. 

Finally, replacing $x$ by $1/x$ in \eqref{lemma01_ineq01}, we have the reversed inequality of \eqref{lemma01_ineq01} for $0<x<1$ and $0< t < 1$.

\item[(II)]
As we stated in the beginning of (I), we have the symmetric property for $f(x,t)$ such that $f(x,1-t)=f(x,t)$. Therefore it is sufficient to consider the case $t > 1$. 
Putting  $t:=1/s< 1$ in \eqref{lemma01_ineq01}, we have
$$
\frac{x+1}{x-1} > \left(1-\frac{1}{s}\right)^2\left(\frac{x+x^{1/s}}{x-x^{1/s}}\right)+\frac{1}{s^2}\left(\frac{x^{1/s}+1}{x^{1/s}-1}\right),\quad (x>1,\,\,\,s> 1).
$$
Multiplying $s^2>0$ to both sides and putting $y:=x^{1/s}>1$, we have
$$
\frac{y+1}{y-1}< (1-s)^2\left(\frac{y+y^s}{y-y^s}\right)+s^2\left(\frac{y^s+1}{y^s-1}\right),\quad (y>1,\,\,\,s> 1),
$$
which shows  the inequality of \eqref{lemma01_ineq01r}.
%For $t\ge 1$, from the calculations in (I), we easily have $l(x,t)\le 0$ which implies $h(x,t)\ge 0$. Also we have $u(x,t)\ge 0$ which implies $k(x,t) \ge 0$. Thus $g(x,t) \ge 0$, and hence $f(x,t)\le 0$ for $t \ge 1$. From $f(x,1-t)=f(x,t)$, we have $f(x,t)\le 0$ for $t \le 0$. Therefore we have
%the inequality \eqref{lemma01_ineq01r} for $x >1$ under the assumption $t \ge 1$ or $t \le 0$.

Finally, replacing $x$ by $1/x$ in \eqref{lemma01_ineq01r}, we have the reversed inequality of \eqref{lemma01_ineq01r} for $0<x<1$.
\end{itemize}
\qed \hfill

{\bf Proof of Theorem \ref{theorem01}:}
The equalities $L(x,0)=1=K(x,0)$ and $L(x,1)=1=K(x,1)$ are trivial. See \cite[Theorem 2.54]{FHPS2005} for example. That is, both equalities in \eqref{theorem01_ineq01} and \eqref{theorem01_ineq02} holds when $t=0$ or $1$. In the following, we prove the strict inequalities of \eqref{theorem01_ineq01} and \eqref{theorem01_ineq02} for the cases $t\neq \{0,1\}$. 
\begin{itemize}
\item[(I)]
The equality holds for the special case $x=1$, since $K(1,t)=\lim\limits_{x\to 1} K(x,t)=1$ for $t\in \mathbb{R}$, by l'Hospital's theorem. We firstly assume $x>1$. By elementary calculations, we have $L(1/x,t)=L(x,t)$ and  it is known \cite[Theorem 2.54 (i)]{FHPS2005} that $K(1/x)=K(x,t)$ for all $t \in \mathbb{R}$ and $x>0$.  Therefore it is sufficient to prove \eqref{theorem01_ineq01} for $0<t < 1$ and $x>1$. 
The inequality \eqref{theorem01_ineq01} without an equality case is equivalent to the following inequality
\begin{equation}\label{theorem01_ineq02}
\frac{(1-t)(x-1)(x+x^t)}{(x+1)(x-x^t)}< \left(\frac{(1-t)(x^t-1)(x+x^t)}{t(x-x^t)(x^t+1)}\right)^t.
\end{equation}
From the ordering of the weighted means, we generally have the inequality $a^t> \dfrac{a}{(1-t)a+t}$ for $a>0$ and $0< t < 1$. In order to prove the inequality \eqref{theorem01_ineq02}, it is sufficient to prove
 \begin{equation}\label{theorem01_ineq03}
 \frac{(1-t)(x-1)(x+x^t)}{(x+1)(x-x^t)}< \frac{a_{x,t}}{(1-t)a_{x,t}+t},\quad a_{x,t}:=\frac{(1-t)(x^t-1)(x+x^t)}{t(x-x^t)(x^t+1)}>0.
 \end{equation}
By elementary calculations with $1< x^t < x$ for $x > 1$ and $0< t < 1$, the inequality given in \eqref{theorem01_ineq03} is equivalent to
 \begin{eqnarray*}
&&  \frac{(1-t)(x-1)(x+x^t)}{(x+1)(x-x^t)}< \frac{(1-t)(x^t-1)(x^t+x)}{(1-t)^2(x^t-1)(x^t+x)+t^2(x-x^t)(x^t+1)}\\
&&   \Leftrightarrow (x+1)(x^t-1)(x-x^t)> (x-1)\left\{ (1-t)^2(x^t-1)(x^t+x)+t^2(x-x^t)(x^t+1)\right\}\\
&&\Leftrightarrow \frac{x+1}{x-1}> (1-t)^2\left(\frac{x+x^t}{x-x^t}\right)+t^2\left(\frac{x^t+1}{x^t-1}\right),
 \end{eqnarray*}
 which is true for $x > 1$ and $0< t < 1$ thanks to the inequality \eqref{lemma01_ineq01} in Lemma \ref{lemma01}.
 \item[(II)]
 We prove the inequality \eqref{theorem01_ineq01r}. 
% As we stated in the beginning of this proof, we have the equality of the inequality \eqref{theorem01_ineq01r} when $x=1$, also we have $L(1/x)=L(x,t)$ and $K(1/x)=K(x,t)$ for all $t \in \mathbb{R}$ and $x>0$. 
 By elementary calculations, we have $L(x,1-t)=L(x,t)$  for all $t \in \mathbb{R}$ and $x>0$.  It is also known \cite[Theorem 2.54 (ii)]{FHPS2005} that  $K(x,1-t)=K(x,t)$ for all $t \in \mathbb{R}$ and $x>0$. 
 Therefore it is sufficient to prove \eqref{theorem01_ineq01r} for $t > 1$ and $x>0$. 
Putting   $t:=1/s< 1$  in \eqref{theorem01_ineq01} without an equality case, we have
$$
\left(\frac{x^{1/s}+x}{x+1}\right)\left(\frac{x^{1/s}+1}{x^{1/s}+x}\right)^{1/s}< \frac{\left(s^{1/s}-x\right)}{(1/s-1)(x-1)}\left(\frac{(1/s-1)\left(x^{1/s}-1\right)}{1/s\left(x^{1/s}-x\right)}\right)^{1/s},\quad (x>0,\,\,\,s> 1).
$$
Taking $s$--th power of the both sides and then taking the inverse  of the both sides, we have
$$
\left(\frac{x^{1/s}+x}{x^{1/s}+1}\right)\left(\frac{x+1}{x^{1/s}+x}\right)^s> \frac{\left(x^{1/s}-x\right)}{(1-s)\left(x^{1/s}-1\right)}\left(\frac{(1-s)\left(x-1\right)}{s\left(x^{1/s}-x\right)}\right)^s,\quad (x>0,\,\,\,s> 1).
$$
 Putting $y:=x^{1/s}>0$, we have
 $$
 \left(\frac{y^s+y}{y+1}\right) \left(\frac{y^s+1}{y^s+y}\right)^s> \frac{ \left(y-y^s\right)}{(1-s)(y-1)} \left(\frac{(1-s) \left(y^s-1\right)}{s \left(y-y^s\right)}\right)^s,\quad (y>0,\,\,\,s> 1),
 $$
 which shows the inequality \eqref{theorem01_ineq01r}.
  \end{itemize}
  This completes the proof.
\qed \hfill

{\bf Proof of Corollary \ref{cor01}:}
\begin{itemize}
\item[(I)]  By elementary calculations, we see the inequality \eqref{corollary01_ineq01} is equivalent to the inequality:
\begin{eqnarray*}
&& \frac{(x^{2t}-x^2)}{(t-1)(x^2-1)}\left(\frac{t-1}{t}\frac{x^{2t}-1}{x^{2t}-x^2}\right)^t\le \frac{(x^t-x)^2}{(t-1)^2(x-1)^2}\left(\frac{(t-1)^2}{t^2}\frac{(x^t-1)^2}{(x^t-x)^2}\right)^t \\
&& \Leftrightarrow \frac{(1-t)(x-1)(x+x^t)}{(x+1)(x-x^t)}\le \left(\frac{1-t}{t}\frac{(x^t-1)(x+x^t)}{(x-x^t)(x^t+1)}\right)^t
\end{eqnarray*}
which is  equivalent to  \eqref{theorem01_ineq01}. 

\item[(II)] Since $K(x,1-t)=K(x,t)$ for all $t\in\mathbb{R}$, we have only to prove the inequality \eqref{corollary02_ineq01} for $t\ge 1$.
From (I), we have the inequality \eqref{corollary01_ineq01} for  $x > 0$ and $0\le t \le 1$. As stated in the proof of (I),  the inequality \eqref{corollary01_ineq01} is equivalent to the inequality \eqref{theorem01_ineq01}. In the inequality \eqref{theorem01_ineq01}, we put $t:=1/s$ for $s \ge 1$ and $x^{1/s}:=y > 0$. Then the inequality \eqref{theorem01_ineq01} is written as
$$
\left(\frac{y^s+y}{y^s+1}\right)\left(\frac{y+1}{y^s+y}\right)^{1/s}\le \frac{s(y^s-y)}{(s-1)(y^s-1)}\left(\frac{(s-1)(y-1)}{(y^s-y)}\right)^{1/s}
$$
by elementary calculations. Taking $s$--th power of the both sides and then taking the inverse  of the both sides, we have, 
$$
\left(\frac{y^s+y}{y+1}\right)\left(\frac{y^s+1}{y^s+y}\right)^s\ge \frac{(y^s-y)}{(s-1)(y-1)}\left(\frac{(s-1)(y^s-1)}{s(y^s-y)}\right)^s.
$$
which is equivalent to the inequality $K(y^2,s)\ge K^2(y,s)$  for $s\ge 1$ and $y > 0$, by elementary calculations. 
Thus the inequality \eqref{corollary02_ineq01} is true for $x > 0$, under the assumption $t\ge 1$ or $t \le 0$.
\end{itemize}
\qed \hfill

{\bf Proof of Proposition \ref{last_prop}:}
Corollary \ref{cor_spec_less_geo} states
$$
A{{\natural }_{t}}B\le \Gamma(m,M,t)A{{\sharp }_{t}}B.
$$
Now replace $A$ and $B$ by $A^{-1}$ and $B^{-1}$, respectively. This will lead to replacing $m$ and $M$ by $\frac{1}{M}$ and $\frac{1}{m}$, respectively. Noting that $\Gamma(m,M,t)=\Gamma\left(\frac{1}{M},\frac{1}{m},t\right)$, we find that
\[A^{-1}{{\natural }_{t}}B^{-1}\le \Gamma(m,M,t)A^{-1}{{\sharp }_{t}}B^{-1},\]
which implies
\[\left(A^{-1}{{\natural }_{t}}B^{-1}\right)^{-1}\geq \Gamma(m,M,t)^{-1}\left(A^{-1}{{\sharp }_{t}}B^{-1}\right)^{-1}.\]
This is equivalent to
\begin{equation}\label{eq_remark}
A\sharp_tB\leq \Gamma(m,M,t)A\natural_t B.
\end{equation}
Corollary \ref{cor_geo_less_spec} also states 
\begin{equation}\label{eq_remark00}
A{{\sharp }_{t}}B\le \eta(m,M,t)A{{\natural }_{t}}B.
\end{equation}
In the sequel, we show $\eta(m,M,t) \le  \Gamma(m,M,t)$. 
We firstly show 
\begin{equation}\label{needed_rem_1}
{{\left( \frac{{{\left( M+m \right)}^{2}}}{4Mm} \right)}^{1+t}}\leq \frac{{{\left( {{M}^{1+t}}+{{m}^{1+t}} \right)}^{2}}}{4{{M}^{1+t}}{{m}^{1+t}}}.
\end{equation}
Since the equality holds in \eqref{needed_rem_1} when $t=0$, we assume $0<t \le 1$.
The H\"older inequality for $a_1,a_2,b_1,b_2>0$ with $\dfrac{1}{p}+\dfrac{1}{q}=1$ and $p,q>1$ states that $a_1b_1+a_2b_2\le\left(a_1^p+a_2^p\right)^{1/p}\left(b_1^q+b_2^q\right)^{1/q}$. Putting $a_1:=1,\,\,a_2:=1$ , $b_1:=M,\,\,b_2:=m$ and $p:=\dfrac{t+1}{t}>1,\,\,q:=t+1>1$, we have
$$
1\cdot M+1\cdot m \le \left(1^{\frac{t+1}{t}}+1^{\frac{t+1}{t}}\right)^{\frac{t}{t+1}}\left(M^{t+1}+m^{t+1}\right)^{\frac{1}{t+1}}
$$
 Taking $(t+1)$--th power of the both sides, we have $(M+m)^{t+1}\le 2^{t}\left(M^{t+1}+m^{t+1}\right)$
which is equivalent to \eqref{needed_rem_1}. Thus we have the inequality \eqref{needed_rem_1} for $0<m<M$ and  $0\le t \le 1$.
We secondly prove
\begin{align}\label{needed_rem_2}
K\left( \frac{m}{M},\frac{M}{m},t \right)\le {{K}^{2}}\left( \sqrt{\frac{m}{M}},\sqrt{\frac{M}{m}},t \right).
\end{align}
Putting $x:=\sqrt{\dfrac{M}{m}}>1$ for $0<m<M$, we have the relation with  the different symbol given in \eqref{def_K02}:
$$
K\left(\frac{m}{M},\frac{M}{m},t\right)=K\left(\frac{1}{x},x,t\right)=K(x^2,t),\quad 
 K\left(\sqrt{\frac{m}{M}},\sqrt{\frac{M}{m}},t\right)=K\left(\sqrt{\frac{1}{x}},\sqrt{x},t\right)=K(x,t).
$$
With these and the obtained inequality \eqref{corollary01_ineq01} for $0\le t \le 1$, we have the  inequality \eqref{needed_rem_2} for $0<m<M$ and $0\le t \le 1$. 
The inequality \eqref{needed_rem_2} together with \eqref{needed_rem_1} implies that $\eta(m,M,t)\leq \Gamma(m,M,t)$.
\qed \hfill

\end{document}